\documentclass[10pt,twoside]{article}
\usepackage{mathrsfs}
\usepackage{amsmath}
\usepackage{amssymb}
\usepackage{fancyhdr}
\usepackage{latexsym}
\usepackage{bbding}
\usepackage{mathrsfs}
\usepackage{wasysym}
\usepackage{multicol,graphics}

\setcounter{MaxMatrixCols}{10}
\newtheorem{theorem}{Theorem}[section]
\newtheorem{lemma}[theorem]{Lemma}

\newtheorem{definition}[theorem]{Definition}

\newtheorem{proposition}[theorem]{Proposition}
\numberwithin{equation}{section}
\newenvironment{proof}[1][Proof]{\noindent\textbf{#1.} }{\hfill $\Box$}
\allowdisplaybreaks

 \makeatletter\setlength{\textwidth}{15.0cm}
  \setlength{\oddsidemargin}{1.0cm}
\setlength{\evensidemargin}{1.0cm} \setlength{\textheight}{21.0cm}

\begin{document}
\title{{Well-posedness and decay for the dissipative system modeling electro-hydrodynamics in negative Besov spaces}}
\author{Jihong Zhao$^{\text{1}}$\footnote{Email addresses: jihzhao@163.com, zhaojih@nwsuaf.edu.cn (J. Zhao);
liuqao2005@163.com (Q. Liu).},
\ \ Qiao Liu$^{\text{2}}$\\
[0.2cm] {\small $^{\text{1}}$ College of Science, Northwest A\&F
University,  Yangling, Shaanxi 712100, China}\\
[0.2cm] {\small $^{\text{2}}$ Department of Mathematics, Hunan
Normal University, Changsha, Hunan 410081,  China}}

\date{}
\maketitle

\begin{abstract}
In \cite{GW12} (Y. Guo, Y. Wang, Decay of dissipative equations and
negative Sobolev spaces,  Commun. Partial Differ. Equ. 37 (2012)
2165--2208), Y. Guo and Y. Wang developed a general new energy method for proving the optimal time decay rates of the solutions to dissipative equations. In this paper, we generalize this method in the framework of homogeneous Besov spaces. Moreover, we apply this method to a model arising from
electro-hydrodynamics, which is a strongly coupled system of the
Navier-Stokes equations and the Poisson-Nernst-Planck equations
through charge transport and external forcing terms. We show that the negative Besov norms are preserved along time evolution,  and obtain the optimal time decay rates of the higher-order spatial derivatives of solutions by the Fourier splitting approach and the interpolation techniques.

\textbf{Keywords}: Navier-Stokes equations; Poisson-Nernst-Planck
equations; electro-hydrodynamics; well-posedness; decay; Besov space

\textbf{2010 AMS Subject Classification}: 35B40, 35K15, 35K55, 35Q35, 76A05
\end{abstract}

\section{Introduction}
In \cite{GW12}, Y. Guo and Y. Wang developed a new energy approach to establish the optimal time decay rates of the solutions to the Cauchy problem of the heat equation:
\begin{equation}\label{eq1.1}
\begin{cases}
\partial_{t}u-\Delta u=0,\ \ & x\in\mathbb{R}^{3},\ t>0,\\
u(x,0)=u_{0}(x),\ \ & x\in\mathbb{R}^{3}.
\end{cases}
\end{equation}
They proved the following result:
\begin{theorem}\label{th1.1}
If $u_{0}\in H^{N}(\mathbb{R}^{3})\cap \dot{H}^{-s}(\mathbb{R}^{3})$ with $N\ge 0$ be an integer and $s\ge0$ be a real number, then for any
real number $\ell\in[-s,N]$, there exists a constant $C_{0}$ such that
\begin{equation}\label{eq1.2}
  \|\nabla^{\ell}u(t)\|_{L^2}\le C_{0}(1+t)^{-\frac{\ell+s}{2}}.
\end{equation}
\end{theorem}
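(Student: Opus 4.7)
The plan is to mimic the energy-interpolation argument of Guo--Wang, which is more robust than a direct Fourier-multiplier computation and therefore extends to the nonlinear setting later in the paper. The three ingredients I would use are: a family of energy identities for $\|\nabla^{\ell}u\|_{L^{2}}$, the preservation of the $\dot{H}^{-s}$ norm, and a Gagliardo--Nirenberg type interpolation that converts the dissipation into a closed ODE inequality.

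First I would note that for every nonnegative integer $\ell\le N$, multiplying \eqref{eq1.1} by $(-\Delta)^{\ell}u$ and integrating gives the identity
\begin{equation*}
\frac{d}{dt}\|\nabla^{\ell}u(t)\|_{L^{2}}^{2}+2\|\nabla^{\ell+1}u(t)\|_{L^{2}}^{2}=0,
\end{equation*}
which in particular shows $\|\nabla^{\ell}u(t)\|_{L^{2}}$ is nonincreasing. Testing instead against $(-\Delta)^{-s}u$ (justified by the Riesz-potential characterization of $\dot{H}^{-s}$ for $s\in[0,3/2)$, and by a dyadic/Besov decomposition for larger $s$) yields the analogous identity
\begin{equation*}
\frac{d}{dt}\|u(t)\|_{\dot{H}^{-s}}^{2}+2\|\nabla u(t)\|_{\dot{H}^{-s}}^{2}=0,
\end{equation*}
so in particular $\|u(t)\|_{\dot{H}^{-s}}\le\|u_{0}\|_{\dot{H}^{-s}}$ for all $t\ge 0$. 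This preservation of the negative norm is the crucial input.

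Next I would invoke the interpolation inequality
\begin{equation*}
\|\nabla^{\ell}u\|_{L^{2}}\le\|u\|_{\dot{H}^{-s}}^{\theta}\|\nabla^{\ell+1}u\|_{L^{2}}^{1-\theta},\qquad\theta=\frac{1}{\ell+s+1},
\end{equation*}
which by scaling is the only admissible exponent. Rearranging and using the bound $\|u\|_{\dot{H}^{-s}}\le\|u_{0}\|_{\dot{H}^{-s}}=:M$ gives
\begin{equation*}
\|\nabla^{\ell+1}u\|_{L^{2}}^{2}\ge M^{-\frac{2}{\ell+s}}\|\nabla^{\ell}u\|_{L^{2}}^{2+\frac{2}{\ell+s}}.
\end{equation*}
Plugging this into the energy identity and writing $f(t):=\|\nabla^{\ell}u(t)\|_{L^{2}}^{2}$, I obtain an ordinary differential inequality of the form $f'(t)+c\,f(t)^{1+1/(\ell+s)}\le 0$, which integrates to $f(t)\le C(1+t)^{-(\ell+s)}$ and hence \eqref{eq1.2} for every integer $\ell\in[0,N]$ and for $\ell=-s$. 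The remaining fractional values of $\ell\in[-s,N]$ are handled by interpolating the two endpoint estimates.

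The main obstacle, and the only place where serious care is needed, is the justification of the energy identity at the negative level $\ell=-s$, because the operator $(-\Delta)^{-s}$ is nonlocal and one must make sense of pairings such as $\langle\Delta u,(-\Delta)^{-s}u\rangle$ for general $s\ge 0$. For $s\in[0,3/2)$ I would use the Riesz potential representation and Hardy--Littlewood--Sobolev to control the boundary terms, while for larger $s$ the cleanest route is a Littlewood--Paley decomposition, applying the identity to each frequency block and summing with the appropriate $2^{-2sj}$ weights; this is precisely the step that motivates passing to the Besov-space framework that the rest of the paper develops.
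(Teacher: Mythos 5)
Your proposal is correct and follows essentially the same strategy the paper uses: the paper cites Theorem \ref{th1.1} from \cite{GW12} and reproduces exactly this energy--interpolation scheme in its proof of Theorem \ref{th1.2}, namely an energy identity giving dissipation at one higher derivative level, preservation of the negative norm $\|u(t)\|_{\dot{H}^{-s}}\le\|u_{0}\|_{\dot{H}^{-s}}$, an interpolation inequality converting the dissipation term into a superlinear power of $\|\nabla^{\ell}u\|_{L^{2}}$, and integration of the resulting differential inequality $f'+cf^{1+1/(\ell+s)}\le 0$. The only cosmetic difference is that the paper's Besov version interpolates between levels $-s$ and $\ell+2$ (exponent $2/(\ell+s+2)$) while you interpolate between $-s$ and $\ell+1$ (exponent $1/(\ell+s+1)$); both yield the rate $(1+t)^{-(\ell+s)/2}$, and your worry about justifying the pairing with $(-\Delta)^{-s}u$ is moot for the linear heat equation, where Plancherel handles all $s\ge 0$ at once.
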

Here $H^{s}(\mathbb{R}^{3})$ and $\dot{H}^{s}(\mathbb{R}^{3})$ denote the nonhomogeneous Sobolev space and the homogeneous Sobolev space, respectively.

In this paper, we generalize this new energy approach in the framework of Besov spaces. In order to illustrate this approach, we revisit the heat equation \eqref{eq1.1}.
\begin{theorem}\label{th1.2} Let $N\ge 0$ be an integer and $s\ge0$ be a real number, $1\leq p<\infty$.
If $u_{0}\in \dot{B}^{N}_{p,1}(\mathbb{R}^{3})\cap\dot{B}^{-s}_{p,1}(\mathbb{R}^{3})$, then for any
real number $\ell\in[-s,N]$, there exists a constant $C_{0}$ such that
\begin{equation}\label{eq1.3}
  \|u(t)\|_{\dot{B}^{\ell}_{p,1}}\le C_{0}(1+t)^{-\frac{\ell+s}{2}}.
\end{equation}
\end{theorem}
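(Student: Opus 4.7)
The plan is to implement the Guo--Wang energy scheme one Littlewood--Paley block at a time. Since $u(t)=e^{t\Delta}u_{0}$ and the homogeneous dyadic block $\dot{\Delta}_{j}u_{0}$ has Fourier support in an annulus $\{|\xi|\sim 2^{j}\}$, I would first establish the block-wise semigroup estimate
\[
\|\dot{\Delta}_{j}u(t)\|_{L^{p}}\le C\,e^{-c\,2^{2j}t}\,\|\dot{\Delta}_{j}u_{0}\|_{L^{p}},\qquad j\in\mathbb{Z},\ t\ge 0,
\]
with absolute constants $C,c>0$. This can be read off the heat kernel acting as a Fourier multiplier on an annulus, or, in keeping with the energy spirit of the paper, be proved by applying $\dot{\Delta}_{j}$ to \eqref{eq1.1}, testing against $|\dot{\Delta}_{j}u|^{p-2}\dot{\Delta}_{j}u$, and invoking the Bernstein-type lower bound
\[
-\int_{\mathbb{R}^{3}}\Delta(\dot{\Delta}_{j}u)\,|\dot{\Delta}_{j}u|^{p-2}\dot{\Delta}_{j}u\,dx\ge c\,2^{2j}\,\|\dot{\Delta}_{j}u\|_{L^{p}}^{p},
\]
which yields $\tfrac{d}{dt}\|\dot{\Delta}_{j}u\|_{L^{p}}^{p}+c\,2^{2j}\|\dot{\Delta}_{j}u\|_{L^{p}}^{p}\le 0$.

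Once the block decay is in hand, summing against the weight $2^{j\ell}$ gives
\[
\|u(t)\|_{\dot{B}^{\ell}_{p,1}}\le C\sum_{j\in\mathbb{Z}}2^{j\ell}e^{-c\,2^{2j}t}\,\|\dot{\Delta}_{j}u_{0}\|_{L^{p}},
\]
and the rate $(1+t)^{-(\ell+s)/2}$ will drop out of the elementary identity
\[
2^{j\ell}e^{-c\,2^{2j}t}=t^{-(\ell+s)/2}\bigl((2^{2j}t)^{(\ell+s)/2}e^{-c\,2^{2j}t}\bigr)2^{-js}\le M\,t^{-(\ell+s)/2}\,2^{-js},
\]
valid because $\ell+s\ge 0$ makes $M:=\sup_{x\ge 0}x^{(\ell+s)/2}e^{-cx}$ finite; this is the Besov analogue of Schonbek's Fourier splitting. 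Plugging in and summing over $j$ produces $\|u(t)\|_{\dot{B}^{\ell}_{p,1}}\le CM\,t^{-(\ell+s)/2}\|u_{0}\|_{\dot{B}^{-s}_{p,1}}$ for $t\ge 1$.

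For the short-time regime $t\in[0,1]$ I would simply use that the semigroup is bounded on $\dot{B}^{\ell}_{p,1}$ together with the interpolation
\[
\|u_{0}\|_{\dot{B}^{\ell}_{p,1}}\le C\|u_{0}\|_{\dot{B}^{-s}_{p,1}}^{\theta}\|u_{0}\|_{\dot{B}^{N}_{p,1}}^{1-\theta},\qquad \theta=\frac{N-\ell}{N+s},
\]
which is precisely where the upper bound $\ell\le N$ enters. Gluing the two regimes yields the claimed $(1+t)^{-(\ell+s)/2}$ estimate.

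The step I expect to be subtlest is not this heat-equation proof itself, which becomes routine once the per-block exponential decay is isolated, but extracting a template robust enough to survive the electro-hydrodynamic perturbation treated later in the paper: there the clean identity $u(t)=e^{t\Delta}u_{0}$ disappears, the semigroup inequality must be replaced by a differential inequality for $\|\dot{\Delta}_{j}u\|_{L^{p}}$ carrying commutator and coupling terms, and the $\dot{B}^{-s}_{p,1}$ control that is preserved for free here turns into a genuine nonlinear a priori estimate.
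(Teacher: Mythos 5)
Your argument is correct, but it follows a genuinely different route from the paper's. You integrate the per-block differential inequality first, obtaining the exponential decay $\|\Delta_{j}u(t)\|_{L^{p}}\le Ce^{-c2^{2j}t}\|\Delta_{j}u_{0}\|_{L^{p}}$, and then sum against $2^{j\ell}$ using $\sup_{x\ge0}x^{(\ell+s)/2}e^{-cx}<\infty$; this is in effect the sharp semigroup smoothing bound $\|e^{t\Delta}\|_{\dot{B}^{-s}_{p,1}\to\dot{B}^{\ell}_{p,1}}\lesssim t^{-(\ell+s)/2}$, supplemented for $t\le1$ by interpolating the initial data between $\dot{B}^{-s}_{p,1}$ and $\dot{B}^{N}_{p,1}$ (your exponent $\theta=\frac{N-\ell}{N+s}$ checks out). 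The paper instead never integrates at the block level: it sums the differential inequality first to get $\frac{d}{dt}\|u\|_{\dot{B}^{\ell}_{p,1}}+\kappa\|u\|_{\dot{B}^{\ell+2}_{p,1}}\le0$, deduces that the $\dot{B}^{-s}_{p,1}$ norm is non-increasing, and then uses the interpolation $\|u\|_{\dot{B}^{\ell}_{p,1}}\le\|u\|_{\dot{B}^{-s}_{p,1}}^{\frac{2}{\ell+s+2}}\|u\|_{\dot{B}^{\ell+2}_{p,1}}^{\frac{\ell+s}{\ell+s+2}}$ to close a nonlinear ODE $\frac{d}{dt}X+C_{0}X^{1+\frac{2}{\ell+s}}\le0$ whose solution gives the algebraic rate. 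Your approach is more elementary and even yields the stronger statement that only $u_{0}\in\dot{B}^{-s}_{p,1}$ is needed for $t\ge1$; but it leans on having the explicit solution operator per block, which is exactly what disappears for the coupled system \eqref{eq3.1}. The paper's version is deliberately organized so that every step (dissipative differential inequality, preservation of the negative norm, interpolation, nonlinear ODE) survives verbatim in Propositions \ref{pro4.1} and \ref{pro4.6}, where the block inequality carries convection and coupling source terms and cannot be integrated in closed form — a point you correctly anticipate in your closing remark.
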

\begin{proof}
Let $\ell\in[-s,N]$. Applying the dyadic operator $\Delta_{j}$ to the heat equation \eqref{eq1.1}, we see that
\begin{equation*}
  \partial_{t}\Delta_{j}u-\Delta\Delta_{j}u=0,
\end{equation*}
which taking the standard $L^{2}$ inner product with $|\Delta_{j}u|^{p-2}\Delta_{j}u$ leads to
\begin{equation*}
  \frac{1}{p}\frac{d}{dt}\|\Delta_{j}u\|_{L^{p}}^{p}-\int_{\mathbb{R}^{3}}\Delta\Delta_{j}u|\Delta_{j}u|^{p-2}\Delta_{j}udx=0.
\end{equation*}
Thanks to \cite{D01}, there exists a constant $\kappa$ such that
\begin{equation*}
  -\int_{\mathbb{R}^{3}}\Delta\Delta_{j}u|\Delta_{j}u|^{p-2}\Delta_{j}udx\geq \kappa2^{2j}\|\Delta_{j}u\|_{L^{p}}^{p}.
\end{equation*}
Thus, we obtain
\begin{equation*}
   \frac{d}{dt}\|\Delta_{j}u\|_{L^{p}}+\kappa2^{2j}\|\Delta_{j}u\|_{L^{p}}\leq 0.
\end{equation*}
Multiplying the above inequality  by $2^{j\ell}$, then taking $l^{1}$ norm to the resultant yields that
\begin{equation}\label{eq1.4}
  \frac{d}{dt}\|u\|_{\dot{B}^{\ell}_{p,1}}+\kappa\|u\|_{\dot{B}^{\ell+2}_{p,1}}\leq 0.
\end{equation}
Integrating the above in time, we obtain
\begin{equation}\label{eq1.5}
  \|u\|_{\dot{B}^{\ell}_{p,1}}\leq \|u_{0}\|_{\dot{B}^{\ell}_{p,1}}.
\end{equation}
This implies that inequality \eqref{eq1.3} holds  in particular  with $\ell=-s$. Now for $-s<\ell\leq N$, we use the interpolation relation, see Lemma \ref{le5.2} below,  to get
\begin{equation*}
  \|u\|_{\dot{B}^{\ell}_{p,1}}\leq \|u\|_{\dot{B}^{-s}_{p,1}}^{\frac{2}{\ell+s+2}}\|u\|_{\dot{B}^{\ell+2}_{p,1}}^{\frac{\ell+s}{\ell+s+2}},
\end{equation*}
which combining \eqref{eq1.5} implies that
\begin{equation}\label{eq1.6}
  \|u\|_{\dot{B}^{\ell+2}_{p,1}}\geq \|u_{0}\|_{\dot{B}^{-s}_{p,1}}^{-\frac{2}{\ell+s}}\|u\|_{\dot{B}^{\ell}_{p,1}}^{1+\frac{2}{\ell+s}}.
\end{equation}
Plugging \eqref{eq1.6} into \eqref{eq1.4}, we conclude that there exists a constant $C_{0}$ such that
\begin{equation*}
  \frac{d}{dt}\|u\|_{\dot{B}^{\ell}_{p,1}}+C_{0}\|u\|_{\dot{B}^{\ell}_{p,1}}^{1+\frac{2}{\ell+s}}\leq 0.
\end{equation*}
Solving this inequality implies that
\begin{equation*}
  \|u\|_{\dot{B}^{\ell}_{p,1}}\leq \big(\|u_{0}\|_{\dot{B}^{\ell}_{p,1}}^{-\frac{2}{\ell+s}}+\frac{2C_{0}t}{\ell+s}\big)^{-\frac{\ell+s}{2}}\leq C_{0}(1+t)^{-\frac{\ell+s}{2}}.
\end{equation*}
We complete the proof of Theorem \ref{th1.2}.
\end{proof}

\noindent\textbf{Organization of the paper} In Section 2, we make some preliminary preparations. In Section 3, we state our main results. Section 4 is devoted to giving the proofs of Theorems \ref{th3.1} and \ref{th3.2}. In the final Appendix,  we first collect some analytic tools used in this paper, then  give a sketched proof of the global existence of solutions with small initial data in Theorem \ref{th3.1}.

\section{Preliminaries}

\subsection{Notations}

In this paper, we shall use the following notations.
\begin{itemize}
\item For two
constants $A$ and $B$, the notation $A\lesssim B$ means that there
is a uniform constant $C$ (always independent of $x, t$), which may vary from line to line, such that $A\le CB$. $A\approx B$ means that $A\lesssim B$
and $B\lesssim A$.
\item For a quasi-Banach space $X$ and for any $0<T\leq\infty$, we use standard notation $L^{p}(0,T; X)$ or $L^{p}_{T}(X)$  for the quasi-Banach space of Bochner measurable functions
$f$ from $(0, T)$ to $X$ endowed with the norm
\begin{equation*}
\|f\|_{L^{p}_{T}(X)}:=
\begin{cases}
  (\int_{0}^{T}\|f(\cdot,t)\|_{X}^{p}dt)^{\frac{1}{p}}\ \ \ &\text{for}\ \ \ 1\leq p<\infty,\\
  \sup_{0\leq t\leq T}\|f(\cdot,t)\|_{X}\ \ \ &\text{for}\ \ \ p=\infty.
\end{cases}
\end{equation*}
In particular, if $T=\infty$, we use $\|f\|_{L^{p}_{t}(X)}$ instead of $\|f\|_{L^{p}_{\infty}(X)}$.

\item We shall denote by $(f|g)$ the
$L^{2}(\mathbb{R}^{3})$ inner product of two functions $f$ and $g$.

\item $(d_{j})_{j\in\mathbb{Z}}$ will be a generic element of
$l^{1}(\mathbb{Z})$ so that $d_{j}\ge0$ and
$\sum_{j\in\mathbb{Z}}d_{j}=1$.

\item We say that a vector $u=(u^{1}, u^{2}, u^{3})$ belongs to a
function space $X$ if $u^{j}\in X$ holds for every $j=1,2,3$ and we
put $\|u\|_{X}:=\max_{1\leq j
\leq3}\|u^{j}\|_{X}$.

\item Given two quasi-Banach spaces $X$ and $Y$, the
product of these two spaces $X\times Y$ will be equipped with the
usual norm $\|(u,v)\|_{X\times
Y}:=\|u\|_{X}+\|v\|_{Y}$.
\end{itemize}

\subsection{Littlewood-Paley theory and Besov spaces}

Let
$\mathcal{S}(\mathbb{R}^{3})$ be the Schwartz class of rapidly
decreasing function, and $\mathcal{S}'(\mathbb{R}^{3})$ of temperate distributions be the dual set of $\mathcal{S}(\mathbb{R}^{3})$.
Let $\varphi\in\mathcal{S}(\mathbb{R}^{3})$ be a  smooth radial function valued in $[0,1]$ such that  $\varphi$ is supported in the shell $\mathcal{C}=\{\xi\in\mathbb{R}^{3},\ \frac{3}{4}\leq
|\xi|\leq\frac{8}{3}\}$, and
\begin{align*}
  \sum_{j\in\mathbb{Z}}\varphi(2^{-j}\xi)=1, \ \ \ \forall\xi\in\mathbb{R}^{3}\backslash\{0\}.
\end{align*}
Then for any $f\in\mathcal{S}'(\mathbb{R}^{3})$, we define
for all $j\in \mathbb{Z}$,
\begin{align}\label{eq2.1}
  \Delta_{j}f:=\varphi(2^{-j}D)f \ \ \ \text{and}\ \ \
  S_{j}f:=\sum_{k\leq j-1}\Delta_{k}f.
\end{align}
By telescoping the series, we have the following homogeneous Littlewood-Paley decomposition:
\begin{equation*}
  f=\sum_{j\in\mathbb{Z}}\Delta_{j}f \ \ \text{for}\ \
  f\in\mathcal{S}'(\mathbb{R}^{3})/\mathcal{P}(\mathbb{R}^{3}),
\end{equation*}
where $\mathcal{P}(\mathbb{R}^{3})$ is the set of polynomials (see \cite{BCD11}).
We remark here that the Littlewood-Paley decomposition satisfies the property of almost orthogonality, that is to say, for any $f, g\in\mathcal{S}'(\mathbb{R}^{3})/\mathcal{P}(\mathbb{R}^{3})$, the following properties hold:
\begin{align}\label{eq2.2}
  \Delta_{i}\Delta_{j}f\equiv0\ \ \ \text{if}\ \ \ |i-j|\geq 2\ \ \ \text{and}\ \ \
  \Delta_{i}(S_{j-1}f\Delta_{j}g)\equiv0 \ \ \ \text{if}\ \ \ |i-j|\geq 5.
\end{align}

Using the above decomposition, the  stationary/time dependent homogeneous Besov spaces can be defined as follows:

\begin{definition}\label{de2.1}
Let $s\in \mathbb{R}$, $1\leq p,r\leq\infty$ and $f\in\mathcal{S}'(\mathbb{R}^{3})$, we set
\begin{equation*}
  \|f\|_{\dot{B}^{s}_{p,r}}:= \begin{cases} \left(\sum_{j\in\mathbb{Z}}2^{jsr}\|\Delta_{j}f\|_{L^{p}}^{r}\right)^{\frac{1}{r}}
  \ \ &\text{for}\ \ 1\leq r<\infty,\\
  \sup_{j\in\mathbb{Z}}2^{js}\|\Delta_{j}f\|_{L^{p}}\ \
  &\text{for}\ \
  r=\infty.
 \end{cases}
\end{equation*}
Then the homogeneous Besov
space $\dot{B}^{s}_{p,r}(\mathbb{R}^{3})$ is defined by
\begin{itemize}
\item For $s<\frac{3}{p}$ (or $s=\frac{3}{p}$ if $r=1$), we define
\begin{equation*}
  \dot{B}^{s}_{p,r}(\mathbb{R}^{3}):=\Big\{f\in \mathcal{S}'(\mathbb{R}^{3}):\ \
  \|f\|_{\dot{B}^{s}_{p,r}}<\infty\Big\}.
\end{equation*}
\item If $k\in\mathbb{N}$ and $\frac{3}{p}+k\leq s<\frac{3}{p}+k+1$ (or $s=\frac{3}{p}+k+1$ if $r=1$), then $\dot{B}^{s}_{p,r}(\mathbb{R}^{3})$
is defined as the subset of distributions $f\in\mathcal{S}'(\mathbb{R}^{3})$ such that $\partial^{\beta}f\in\mathcal{S}'(\mathbb{R}^{3})$
whenever $|\beta|=k$.
\end{itemize}
\end{definition}

\begin{definition}\label{de2.2} {\em (\cite{CL95})} For $0<T\leq\infty$, $s\leq \frac{3}{p}$ (resp. $s\in \mathbb{R}$),
$1\leq p, r, \rho\leq\infty$. We define the mixed time-space $\mathcal{L}^{\rho}(0,T; \dot{B}^{s}_{p,r}(\mathbb{R}^{3}))$
as the completion of $\mathcal{C}([0,T]; \mathcal{S}(\mathbb{R}^{3}))$ by the norm
$$
  \|f\|_{\mathcal{L}^{\rho}_{T}(\dot{B}^{s}_{p,r})}:=\left(\sum_{j\in\mathbb{Z}}2^{jsr}\left(\int_{0}^{T}
  \|\Delta_{j}f(\cdot,t)\|_{L^{p}}^{\rho}dt\right)^{\frac{r}{\rho}}\right)^{\frac{1}{r}}<\infty
$$
with the usual change if $\rho=\infty$
or $r=\infty$.  For simplicity, we use $\|f\|_{\mathcal{L}^{\rho}_{t}(\dot{B}^{s}_{p,r})}$ instead of $\|f\|_{\mathcal{L}^{\rho}_{\infty}(\dot{B}^{s}_{p,r})}$.
\end{definition}

The following properties of Besov spaces are well-known:

 (1) If  $s<\frac{3}{p}$ or
$s=\frac{3}{p}$ and $r=1$, then
$(\dot{B}^{s}_{p,r}(\mathbb{R}^{3}),\|\cdot\|_{\dot{B}^{s}_{p,r}})$
is a Banach space which is continuously embedded in
$\mathcal{S}'(\mathbb{R}^{3})$.

(2) In the case that $p=r=2$, we get the homogeneous Sobolev space
$\dot{H}^{s}(\mathbb{R}^{3})\cong\dot{B}^{s}_{2,2}(\mathbb{R}^{3})$,
which is endowed with the equivalent norm
$\|f\|_{\dot{H}^{s}}=\|\Lambda^{s}f\|_{L^{2}}$ with $\Lambda=\sqrt{-\Delta}$.

(3) Let $s\in \mathbb{R}$, $1\leq p,r\leq\infty$, and
$u\in\mathcal{S}'(\mathbb{R}^{3})/\mathcal{P}(\mathbb{R}^{3})$. Then $u\in
\dot{B}^{s}_{p,r}(\mathbb{R}^{3})$ if and only if there exists
$\{d_{j,r}\}_{j\in\mathbb{Z}}$ such that $d_{j,r}\ge0$,
$\|d_{j,r}\|_{l^{r}}=1$ and
$$
  \|\Delta_{j}u\|_{L^{p}}\lesssim
  d_{j,r}2^{-js}\|u\|_{\dot{B}^{s}_{p,r}} \ \ \text{for all }\
  j\in\mathbb{Z}.
$$

(4)  According to the  Minkowski inequality,
it is readily to see that
\begin{equation}\label{eq2.3}
\begin{cases}
  \|f\|_{\mathcal{L}^{\rho}_{T}(\dot{B}^{s}_{p,r})}\leq\|f\|_{L^{\rho}_{T}(\dot{B}^{s}_{p,r})} \ \ \  \text{if}\ \ \  \rho\leq r,\\
  \|f\|_{L^{\rho}_{T}(\dot{B}^{s}_{p,r})}\leq \|f\|_{\mathcal{L}^{\rho}_{T}(\dot{B}^{s}_{p,r})} \ \ \ \text{if} \ \ \  r\leq \rho.
\end{cases}
\end{equation}
%---(eq2.2)----

Finally we recall the following Bony's paradifferential decomposition
 (see \cite{B81}). The
paraproduct between $f$ and $g$ is defined by
\begin{equation*}
  T_{f}g:=\sum_{j\in\mathbb{Z}}S_{j-1}f\Delta_{j}g.
\end{equation*}
Thus we have the formal decomposition
\begin{equation*}
  fg=T_{f}g+T_{g}f+R(f,g),
\end{equation*}
where
\begin{equation*}
  R(f,g):=\sum_{j\in\mathbb{Z}}\Delta_{j}f\widetilde{\Delta_{j}}g \ \
  \text{and}\ \
  \widetilde{\Delta_{j}}:=\Delta_{j-1}+\Delta_{j}+\Delta_{j+1}.
\end{equation*}

\section{Main results}

We are concerned with the  following system of
dissipative nonlinear equations governing
hydrodynamic transport of binary diffuse charge densities. The 3-D
Cauchy problem reads as follows:
\begin{equation}\label{eq3.1}
\begin{cases}
  \partial_{t} u+u\cdot\nabla u-\mu\Delta
  u+\nabla \Pi=\varepsilon\Delta
  \phi\nabla\phi,\ \ & x\in\mathbb{R}^{3},\ t>0,\\
  \nabla\cdot u=0,\ \ & x\in\mathbb{R}^{3},\ t>0,\\
  \partial_{t} v+u\cdot \nabla
  v=\nabla\cdot(D_{1}\nabla v-\nu_{1}v\nabla \phi),\ \ & x\in\mathbb{R}^{3},\ t>0,\\
  \partial_{t} w+u\cdot \nabla
  w=\nabla\cdot(D_{2}\nabla w+\nu_{2}w\nabla \phi),\ \ & x\in\mathbb{R}^{3},\ t>0,\\
  \varepsilon\Delta \phi=v-w,\ \ & x\in\mathbb{R}^{3},\ t>0
\end{cases}
\end{equation}
with initial condition
\begin{equation}\label{eq3.2}
  (u, v, w)|_{t=0}=(u_0, v_0, w_0), \ \  x\in\mathbb{R}^{3}.
\end{equation}
Here $u$ and $\Pi$ denote the
velocity field and the pressure of the fluid, respectively, $\phi$
is the electrostatic potential caused by the charged particles, $v$ and $w$ denote the charge
densities of a negatively and positively charged species,
respectively, hence the sign difference in front of the convective
term in either equation.  $\mu$ is the kinematic viscosity, and $\varepsilon$ is the
dielectric constant, known as the Debye length, related to vacuum
permittivity and characteristic charge density.  $D_{1}$, $D_{2}$, $\nu_{1}$, $\nu_{2}$ are
the diffusion and mobility coefficients of the charged
particles\footnote{$D_{1}=\frac{kT_{0}\nu_{1}}{e}$,
$D_{2}=\frac{kT_{0}\nu_{2}}{e}$, where $T_{0}$ is the ambient
temperature, $k$ is the Boltzmann constant, and $e$ is the charge
mobility.}. Since the concrete values of the constants $\mu$, $\varepsilon $, $D_{1}$, $D_{2}$, $\nu_{1}$ and $\nu_{2}$ play no role in our discussion,
for simplicity, we shall assume them
to be all equal to one throughout this paper.

We mention here that the right-hand side term in the momentum equations is the Lorentz force, which exhibits
$\varepsilon\Delta\phi\nabla\phi=\varepsilon\nabla\cdot\sigma$,
where the electric stress $\sigma$ is a rank one tensor plus a
pressure, for $i,j=1,2,3$,
\begin{equation}\label{eq3.3}
  [\sigma]_{ij}=\big(\nabla\phi\otimes\nabla\phi-\frac{1}{2}|\nabla\phi|^{2}I\big)_{ij}
  =\partial_{x_{i}}\phi\partial_{x_{j}}\phi-\frac{1}{2}|\nabla\phi|^{2}\delta_{ij}.
\end{equation}
%--(eq1.3)--
Here $I$ is $3\times3$ identity matrix,  $\delta_{ij}$ is the
Kronecker symbol, and $\otimes$ denotes the tensor product. The
electric stress $\sigma$ stems from the balance of kinetic energy
with electrostatic energy via the least action principle (cf.
\cite{RLZ07}).

The system \eqref{eq3.1}--\eqref{eq3.2} was introduced by Rubinstein
\cite{R90}, which is capable of describing electro-chemical and
fluid-mechanical transport throughout the cellular environment. At
the present time, modeling of electro-diffusion in electrolytes is a
problem of major scientific interest, it finds that such model has a
wide applications in biology (ion channels), chemistry
(electro-osmosis) and pharmacology (transdermal iontophoresis), we
refer the readers to see \cite{LCJS081}--\cite{LMCJS08} for the computational simulations, and \cite{BW04}, \cite{EN00}, \cite{ES05}, \cite{SK04} for detailed  applications of the system
\eqref{eq3.1}--\eqref{eq3.2}.  The mathematical analysis of the system \eqref{eq3.1}--\eqref{eq3.2} was
initiated by  Jerome \cite{J02}, where  the author established a
local existence--uniqueness theory of the system \eqref{eq3.1}--\eqref{eq3.2}
based on the Kato's semigroup framework. For more results concerning existence of (large)
weak solutions, (small and local) mild solutions, convergence rate estimates to
stationary solutions of time-dependent solutions and other related topics we refer the reader to see \cite{DZC11}, \cite{J11}, \cite{JS09}, \cite{R09},  \cite{S09},
\cite{ZDC10}, \cite{ZZL15} and the reference therein.

The invariant space for solving the system \eqref{eq3.1}--\eqref{eq3.2} requires us to analyze the scaling invariance property of  the system \eqref{eq3.1}--\eqref{eq3.2}.  Set
\begin{equation*}
  (u_{\lambda},v_{\lambda},w_{\lambda}, \Pi_{\lambda}, \phi_{\lambda})(x,t):=(\lambda u,\lambda^2 v,\lambda^2 w, \lambda^2\Pi, \phi)(\lambda x,
  \lambda^2t).
\end{equation*}
Then  if $(u,v,w)$ solves \eqref{eq3.1} with initial data $(u_{0}, v_{0}, w_{0})$ ($\Pi, \phi$ can be determined by $(u,v,w)$), so does
$(u_{\lambda}, v_{\lambda}, w_{\lambda})$ with initial data $(u_{0\lambda}, v_{0\lambda}, w_{0\lambda})$ ($\Pi_{\lambda}, \phi_{\lambda}$ can be determined by $(u_{\lambda}, v_{\lambda}, w_{\lambda})$),  where $u_{0\lambda}(x):=\lambda u_{0}(\lambda x)$, $v_{0\lambda}(x):=\lambda^{2}v_{0}(\lambda x)$, $w_{0\lambda}(x):=\lambda^{2}w_{0}(\lambda x)$. In particular, the norm of $u_{0}\in\dot{B}^{-1+\frac{3}{p}}_{p,1}(\mathbb{R}^{3})$,  $(v_{0}, w_{0})\in\dot{B}^{-2+\frac{3}{q}}_{q,1}(\mathbb{R}^{3})$ ($1\leq p, q\leq\infty$) are scaling invariant under the above change of scale.

Motivated by the optimal time decay rates of the solutions to the heat equation in the framework of Besov spaces, we
aim at using this approach to the system \eqref{eq3.1}--\eqref{eq3.2}.  The
main results are as follows:

\begin{theorem}\label{th3.1}
Let $p,q$ be two positive numbers such that $1\leq p<\infty$,
$1\leq q<6$, and
$$
  \frac{1}{p}+\frac{1}{q}> \frac{1}{3},\ \ \ \frac{1}{q}-\frac{1}{p}> -\min\{\frac{1}{3}, \frac{1}{2p}\}.
$$
Suppose that $u_{0}\in
\dot{B}^{-1+\frac{3}{p}}_{p,1}(\mathbb{R}^{3})$ with $\nabla\cdot
u_{0}=0$, $v_{0}, w_{0}\in
\dot{B}^{-2+\frac{3}{q}}_{q,1}(\mathbb{R}^{3})$. Then  there exists a positive constant $\eta$  such that if
$$
  \|(u_{0}, v_{0}, w_{0})\|_{\dot{B}^{-1+\frac{3}{p}}_{p,1}\times(\dot{B}^{-2+\frac{3}{q}}_{q,1})^{2}}
  \leq\eta,
$$
then  the system \eqref{eq3.1}--\eqref{eq3.2} admits a unique
solution $(u,v,w)$  satisfying
\begin{equation*}
\begin{cases}
  u\in C([0,\infty),\dot{B}^{-1+\frac{3}{p}}_{p,1}(\mathbb{R}^{3}))\cap\mathcal{L}^{\infty}(0, \infty;
  \dot{B}^{-1+\frac{3}{p}}_{p,1}(\mathbb{R}^{3}))\cap  L^{1}(0, \infty;
  \dot{B}^{1+\frac{3}{p}}_{p,1}(\mathbb{R}^{3})),\\
  v,w\in C([0,\infty),\dot{B}^{-2+\frac{3}{q}}_{q,1}(\mathbb{R}^{3}))\cap\mathcal{L}^{\infty}(0, \infty;
  \dot{B}^{-2+\frac{3}{q}}_{q,1}(\mathbb{R}^{3}))\cap  L^{1}(0, \infty;
  \dot{B}^{\frac{3}{q}}_{q,1}(\mathbb{R}^{3})).
\end{cases}
\end{equation*}
If we assume further that $u_{0}\in
\dot{B}^{-s}_{r,1}(\mathbb{R}^{3})\cap\dot{B}^{N}_{r,1}(\mathbb{R}^{3})$, $v_{0}, w_{0}\in
\dot{B}^{-s-1}_{r,1}(\mathbb{R}^{3})\cap\dot{B}^{N-1}_{r,1}(\mathbb{R}^{3})$ for an integer $N$,  a real number $s>0$ and $1<r<\infty$ such that
$$
  \frac{3}{p}-s>3\max\{0, \frac{1}{p}+\frac{1}{r}-1\}\ \ \text{and}\ \ \frac{3}{q}-s>3\max\{0, \frac{1}{q}+\frac{1}{r}-1\},
$$
then for any  $\ell\in[-s, N]$,  there exists a constant $C_{0}$ such that for all $t\geq 0$,
\begin{equation}\label{eq3.4}
     \|(u(t), v(t), w(t))\|_{\dot{B}^{\ell}_{r,1}\times(\dot{B}^{\ell-1}_{r,1})^{2}}\leq C_{0}.
\end{equation}
Moreover,  we have
\begin{equation}\label{eq3.5}
     \|(u(t), v(t), w(t))\|_{\dot{B}^{\ell}_{r,1}\times(\dot{B}^{\ell-1}_{r,1})^{2}}\leq C_{0}(1+t)^{-(\frac{\ell+s}{2})}.
\end{equation}
\end{theorem}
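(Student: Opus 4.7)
The plan is to follow the blueprint of Theorem~\ref{th1.2}, now for the coupled nonlinear system \eqref{eq3.1}--\eqref{eq3.2}, and to close the argument by exploiting the smallness of the critical-regularity norm $\|(u,v,w)\|_{\dot{B}^{-1+3/p}_{p,1}\times(\dot{B}^{-2+3/q}_{q,1})^{2}}$ that comes out of the existence part of the theorem. Having constructed the global small solution, I would use a two-step scheme: first propagate the $\dot{B}^{-s}_{r,1}\times(\dot{B}^{-s-1}_{r,1})^{2}$ norm to obtain \eqref{eq3.4} (preservation of negative regularity), and then interpolate with the linear dissipation to produce the algebraic decay \eqref{eq3.5}.

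To set up the energy estimate, apply $\Delta_{j}$ to each of the five equations of \eqref{eq3.1}, multiply by $|\Delta_{j}u|^{r-2}\Delta_{j}u$ (respectively $|\Delta_{j}v|^{r-2}\Delta_{j}v$ and $|\Delta_{j}w|^{r-2}\Delta_{j}w$) and integrate. The pressure drops out after the Leray projection using $\nabla\cdot u=0$. Invoking the Danchin-type lower bound \cite{D01} for the Laplacian exactly as in the proof of Theorem~\ref{th1.2}, multiplying by $2^{j\ell}$ and $2^{j(\ell-1)}$ respectively, and summing over $j$ in $\ell^{1}$, one obtains a differential inequality of the form
\begin{equation*}
\frac{d}{dt}\mathcal{E}_{\ell}+\kappa\mathcal{D}_{\ell}\lesssim \mathcal{N}_{\ell},
\end{equation*}
where
\begin{equation*}
\mathcal{E}_{\ell}:=\|u\|_{\dot{B}^{\ell}_{r,1}}+\|v\|_{\dot{B}^{\ell-1}_{r,1}}+\|w\|_{\dot{B}^{\ell-1}_{r,1}},\qquad \mathcal{D}_{\ell}:=\|u\|_{\dot{B}^{\ell+2}_{r,1}}+\|v\|_{\dot{B}^{\ell+1}_{r,1}}+\|w\|_{\dot{B}^{\ell+1}_{r,1}},
\end{equation*}
and $\mathcal{N}_{\ell}$ gathers the nonlinear contributions of $u\cdot\nabla u$, $\Delta\phi\nabla\phi$, $u\cdot\nabla v$, $\nabla\cdot(v\nabla\phi)$ and their counterparts for $w$, measured in $\dot{B}^{\ell}_{r,1}$ or $\dot{B}^{\ell-1}_{r,1}$.

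The crux is to bound $\mathcal{N}_{\ell}$ by a product of a small critical factor times $\mathcal{D}_{\ell}$. Using Bony's paraproduct decomposition from Section~2 together with product rules in Besov spaces, each nonlinear term is split as $T_{f}g+T_{g}f+R(f,g)$ and estimated by pushing the highest derivatives onto the factor measured at dissipative regularity while placing the other factor in the scale-invariant space. The Lorentz term $\Delta\phi\nabla\phi$ is treated by using $\Delta\phi=v-w$ and $\nabla\phi=\nabla(-\Delta)^{-1}(w-v)$, so that it reduces to a product of two quantities controlled through $v$ and $w$ (the Riesz transform being bounded on $\dot{B}^{\sigma}_{q,1}$). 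The quantitative conditions $\tfrac{3}{p}-s>3\max\{0,\tfrac{1}{p}+\tfrac{1}{r}-1\}$ and $\tfrac{3}{q}-s>3\max\{0,\tfrac{1}{q}+\tfrac{1}{r}-1\}$ are precisely what is needed in order that the paraproduct and remainder indices land in $\dot{B}^{\ell}_{r,1}$ for every $\ell\in[-s,N]$. I expect estimates of the schematic form
\begin{equation*}
\mathcal{N}_{\ell}\lesssim\bigl(\|u\|_{\dot{B}^{-1+3/p}_{p,1}}+\|v\|_{\dot{B}^{-2+3/q}_{q,1}}+\|w\|_{\dot{B}^{-2+3/q}_{q,1}}\bigr)\mathcal{D}_{\ell}.
\end{equation*}
This is the main obstacle: the critical Lebesgue indices $p,q$ are decoupled from the decay exponent $r$, and negative-index products impose restrictive compatibility conditions, so the hypotheses on $(p,q,r,s)$ in the statement are tailored exactly for these paraproduct estimates to close.

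With such bounds, the smallness of the initial data, propagated to $(u,v,w)(t)$ by the existence theorem, allows $\mathcal{N}_{\ell}$ to be absorbed into $\tfrac{\kappa}{2}\mathcal{D}_{\ell}$, leaving
\begin{equation*}
\tfrac{d}{dt}\mathcal{E}_{\ell}+\tfrac{\kappa}{2}\mathcal{D}_{\ell}\le 0.
\end{equation*}
Taking $\ell=-s$ and integrating in time yields \eqref{eq3.4} for $\ell=-s$; the case $\ell\in(-s,N]$ follows by the same energy identity applied at arbitrary $\ell$, using the regularity of the initial data. Finally, to derive the decay \eqref{eq3.5}, I apply the interpolation of Lemma~\ref{le5.2} in the form
\begin{equation*}
\mathcal{E}_{\ell}\leq C\,\mathcal{E}_{-s}^{2/(\ell+s+2)}\mathcal{D}_{\ell}^{(\ell+s)/(\ell+s+2)},
\end{equation*}
exactly as in the proof of Theorem~\ref{th1.2}, which converts the energy-dissipation inequality into the ordinary differential inequality $\tfrac{d}{dt}\mathcal{E}_{\ell}+C_{0}\mathcal{E}_{\ell}^{1+2/(\ell+s)}\leq 0$. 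Integrating this ODE and invoking the already established bound on $\mathcal{E}_{-s}$ from \eqref{eq3.4} yields $\mathcal{E}_{\ell}(t)\lesssim (1+t)^{-(\ell+s)/2}$, which is precisely \eqref{eq3.5}.
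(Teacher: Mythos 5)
Your overall architecture coincides with the paper's: a frequency-localized $L^r$ energy inequality with the Danchin--Planchon lower bound for the dissipation, propagation of the $\dot{B}^{-s}_{r,1}\times(\dot{B}^{-s-1}_{r,1})^{2}$ norm to get \eqref{eq3.4}, and then the interpolation $\mathcal{E}_{\ell}\lesssim\mathcal{E}_{-s}^{2/(\ell+s+2)}\mathcal{D}_{\ell}^{(\ell+s)/(\ell+s+2)}$ converting energy--dissipation into the ODE $\frac{d}{dt}\mathcal{E}_{\ell}+C\mathcal{E}_{\ell}^{1+2/(\ell+s)}\le0$, exactly as in Theorem \ref{th1.2}. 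Where you genuinely diverge is in how the nonlinearity is closed. You propose the bound $\mathcal{N}_{\ell}\lesssim\bigl(\|u\|_{\dot{B}^{-1+3/p}_{p,1}}+\|(v,w)\|_{\dot{B}^{-2+3/q}_{q,1}}\bigr)\mathcal{D}_{\ell}$ and absorb it into $\frac{\kappa}{2}\mathcal{D}_{\ell}$ by smallness of the critical norm. The paper instead proves (Proposition \ref{pro4.6}) the ``tame'' form $\mathcal{N}_{\ell}\lesssim\frac{\kappa}{6}\mathcal{D}_{\ell}+Y'(t)\,\mathcal{F}(t)$ with $Y'(t)=\|u\|_{\dot{B}^{1+3/p}_{p,1}}+\|(v,w)\|_{\dot{B}^{3/q}_{q,1}}$ time-integrable, and removes the second term by the integrating factor $e^{-KY(t)}$ (a Gronwall argument using only $Y(\infty)<\infty$), plus separate $\ell>0$ versus $\ell\le0$ treatments (Banach algebra with interpolation, respectively Lemma \ref{le5.3}). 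Your variant appears to go through: the hypotheses $\frac{3}{p}-s>3\max\{0,\frac1p+\frac1r-1\}$ and $\frac{3}{q}-s>3\max\{0,\frac1q+\frac1r-1\}$ are precisely what makes the remainder terms converge when one factor is placed at critical negative regularity and all the derivatives are pushed onto the factor measured at $\dot{B}^{\ell+2}_{r,1}$ or $\dot{B}^{\ell+1}_{r,1}$, and it buys a cleaner differential inequality with no exponential weight. The price is a quantifier issue you should make explicit: the constant in your product estimates depends on $\ell$, $r$, $s$, $N$, so absorbing $\mathcal{N}_{\ell}$ into the dissipation forces $\eta$ to be shrunk depending on $(N,s,r)$, whereas in the theorem (and in the paper's proof) $\eta$ is fixed by the existence theory in terms of $p,q$ alone and the higher-order assumptions are imposed afterwards; the exponential-weight route avoids any further smallness requirement. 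Also note that the claimed schematic bound on $\mathcal{N}_{\ell}$ is the technical heart of the proof and is only asserted, not verified, in your sketch.
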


If we relax the high regularity condition imposed on the  initial data in Theorem \ref{th3.1}, then we can obtain the following decay result.
\begin{theorem}\label{th3.2} Under the assumptions of Theorem \ref{th3.1}. Assume that  $(u,v,w)$ be a unique
global solution corresponding to the initial data  $(u_{0}, v_{0}, w_{0})$.
If we assume further that $u_{0}\in
\dot{B}^{-s}_{r,1}(\mathbb{R}^{3})$, $v_{0}, w_{0}\in
\dot{B}^{-s-1}_{r,1}(\mathbb{R}^{3})$ with  $1<r\le\min\{p,q\}$,  $s>\max\{0, 2-\frac{3}{r}\}$, and
$$
  \frac{3}{p}-s>3\max\{0, \frac{1}{p}+\frac{1}{r}-1\}\ \ \text{and}\ \ \frac{3}{q}-s>3\max\{0, \frac{1}{q}+\frac{1}{r}-1\},
$$
then for any  $\ell\in[-s-3(\frac{1}{r}-\frac{1}{p}), -1+\frac{3}{p}]$,  there exists a constant $C_{0}$ such that for all $t\geq 0$,
\begin{equation}\label{eq3.6}
     \|u(t)\|_{\dot{B}^{\ell}_{r,1}}\leq C_{0}(1+t)^{-(\frac{\ell+s}{2})-\frac{3}{2}(\frac{1}{r}-\frac{1}{p})};
\end{equation}
for any  $\ell\in[-s-1-3(\frac{1}{r}-\frac{1}{q}), -2+\frac{3}{q}]$,  there exists a constant $C_{0}$ such that for all $t\geq 0$,
\begin{equation}\label{eq3.7}
     \|(v(t), w(t))\|_{\dot{B}^{\ell-1}_{r,1}}\leq C_{0}(1+t)^{-(\frac{\ell+s}{2})-\frac{3}{2}(\frac{1}{r}-\frac{1}{q})}.
\end{equation}
\end{theorem}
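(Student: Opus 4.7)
The plan is to extend the Fourier splitting / negative Besov propagation argument of Theorem \ref{th3.1} to the $L^r$-Besov scale by exploiting the assumption $r\le\min\{p,q\}$ together with the integrable critical norms already supplied by Theorem \ref{th3.1}. First, propagate the low-regularity norms $\|u(t)\|_{\dot{B}^{-s}_{r,1}}$ and $\|(v,w)(t)\|_{\dot{B}^{-s-1}_{r,1}}$ uniformly in $t\ge0$; second, run a Guo--Wang-style interpolation to obtain a preliminary decay in the $L^r$-scale; finally, upgrade it to the sharp exponent by invoking the Besov embedding $\dot{B}^{-s}_{r,1}\hookrightarrow\dot{B}^{-s-3(\frac{1}{r}-\frac{1}{p})}_{p,1}$, which is available precisely because $r\le p$.

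For the propagation step, apply $\Delta_j$ to each equation in \eqref{eq3.1}, test against $|\Delta_j(\cdot)|^{r-2}\Delta_j(\cdot)$, and use the lower bound from \cite{D01} on the dissipative term to derive
$$
\frac{d}{dt}\|u\|_{\dot{B}^{-s}_{r,1}}+\kappa\|u\|_{\dot{B}^{-s+2}_{r,1}}
\lesssim\|u\cdot\nabla u\|_{\dot{B}^{-s}_{r,1}}+\|\Delta\phi\,\nabla\phi\|_{\dot{B}^{-s}_{r,1}},
$$
with analogous inequalities for $v,w$ at level $-s-1$. The right-hand sides are then estimated via Bony's decomposition as products of the propagated $\dot{B}^{-s}_{r,1}$ (resp.\ $\dot{B}^{-s-1}_{r,1}$) norm with the integrable critical norms $u\in L^1_t(\dot{B}^{1+\frac{3}{p}}_{p,1})$ and $(v,w)\in L^1_t(\dot{B}^{\frac{3}{q}}_{q,1})$ coming from Theorem \ref{th3.1}. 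The hypotheses $\tfrac{3}{p}-s>3\max\{0,\tfrac{1}{p}+\tfrac{1}{r}-1\}$, $\tfrac{3}{q}-s>3\max\{0,\tfrac{1}{q}+\tfrac{1}{r}-1\}$ and $s>\max\{0,2-\tfrac{3}{r}\}$ are exactly what guarantee continuity of these paraproducts and remainders. A Gronwall argument then yields the uniform bound $\|u(t)\|_{\dot{B}^{-s}_{r,1}}+\|(v,w)(t)\|_{\dot{B}^{-s-1}_{r,1}}\le C_0$.

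With propagation in hand, the interpolation used in Theorem \ref{th1.2},
$$
\|u\|_{\dot{B}^{\ell}_{r,1}}\le\|u\|_{\dot{B}^{-s}_{r,1}}^{\frac{2}{\ell+s+2}}\|u\|_{\dot{B}^{\ell+2}_{r,1}}^{\frac{\ell+s}{\ell+s+2}},
$$
inserted into the damped evolution inequality yields the baseline rate $(1+t)^{-(\ell+s)/2}$ throughout $\ell\in[-s,-1+\frac{3}{p}]$. To gain the extra factor $(1+t)^{-\frac{3}{2}(\frac{1}{r}-\frac{1}{p})}$ and to extend the range down to $\ell=-s-3(\tfrac{1}{r}-\tfrac{1}{p})$, I would combine the embedding $\dot{B}^{-s}_{r,1}\hookrightarrow\dot{B}^{-s-3(\frac{1}{r}-\frac{1}{p})}_{p,1}$ with a time-dependent Fourier splitting at scale $R(t)\sim t^{-1/2}$: low frequencies are controlled by the stronger $L^p$-based negative-regularity norm, whose Bernstein gain supplies the additional $3(\frac{1}{r}-\frac{1}{p})/2$ decay, while high frequencies are absorbed by the $L^r$-dissipation $\|u\|_{\dot{B}^{\ell+2}_{r,1}}$. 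The analogous procedure in the $L^q$-scale gives the companion bound for $(v,w)$. The main technical obstacle is precisely this matching of two Lebesgue scales: since no Besov embedding from $L^p$ back to $L^r$ is available when $r<p$, the $L^r$ dissipative estimate and the $L^p$ low-frequency estimate must be run in parallel, with the cut-off chosen so that the two contributions balance and produce the enhanced exponent appearing in \eqref{eq3.6}--\eqref{eq3.7}.
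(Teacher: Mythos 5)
Your first step (uniform-in-time propagation of $\|u(t)\|_{\dot{B}^{-s}_{r,1}}$ and $\|(v,w)(t)\|_{\dot{B}^{-s-1}_{r,1}}$) is exactly what the paper does: this is Proposition \ref{pro4.6} with $\ell=-s$, giving \eqref{eq4.25}, and you also correctly single out the embedding $\dot{B}^{-s}_{r,1}\hookrightarrow\dot{B}^{-s-3(\frac{1}{r}-\frac{1}{p})}_{p,1}$ as the source of the extra decay. But the decisive step --- actually extracting the enhanced exponent $-\frac{\ell+s}{2}-\frac{3}{2}(\frac{1}{r}-\frac{1}{p})$ --- is where your proposal has a genuine gap. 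You replace it with a ``time-dependent Fourier splitting at scale $R(t)\sim t^{-1/2}$'' in which two Lebesgue scales are ``run in parallel,'' but you never derive the differential inequality with a time-dependent cut-off, never estimate the low-frequency block by the preserved negative norm in the $\ell^{1}$-Besov summation, and you misidentify the direction of the embedding: since $r\le p$, the $\dot{B}^{-s-3(\frac{1}{r}-\frac{1}{p})}_{p,1}$ norm is the \emph{weaker} one (it is controlled by the $r$-based norm), not ``the stronger $L^{p}$-based norm.'' The obstacle you describe (``no Besov embedding from $L^{p}$ back to $L^{r}$'') is real for the route you sketch, and your sketch does not overcome it.

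The paper avoids this entirely by never running the differential inequality in the $r$-scale. The point of the embedding is that the preserved bound lives at the \emph{more negative} regularity index $-s-3(\frac{1}{r}-\frac{1}{p})$ in the $p$-scale. One then interpolates purely within the $p$-scale (and $q$-scale for $v,w$): from
\begin{equation*}
  \|u(t)\|_{\dot{B}^{-1+\frac{3}{p}}_{p,1}}\leq C\,\|u(t)\|_{\dot{B}^{-s-3(\frac{1}{r}-\frac{1}{p})}_{p,1}}^{\frac{2}{s+\frac{3}{r}+1}}\,\|u(t)\|_{\dot{B}^{1+\frac{3}{p}}_{p,1}}^{1-\frac{2}{s+\frac{3}{r}+1}}
\end{equation*}
and the uniform bound on the first factor one gets $\|u\|_{\dot{B}^{1+\frac{3}{p}}_{p,1}}\gtrsim\|u\|_{\dot{B}^{-1+\frac{3}{p}}_{p,1}}^{1+\frac{2}{s+\frac{3}{r}-1}}$, which plugged into the already-established energy inequality \eqref{eq4.1} of Proposition \ref{pro4.1} yields the closed ODE $\frac{d}{dt}(e^{-KY}\mathcal{E})+C(e^{-KY}\mathcal{E})^{1+\frac{2}{s+\frac{3}{r}-1}}\le0$ and hence $\mathcal{E}(t)\lesssim(1+t)^{-\frac{s+\frac{3}{r}-1}{2}}$ --- precisely the claimed rate at the endpoints $\ell=-1+\frac{3}{p}$ and $\ell-1=-2+\frac{3}{q}$. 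The remaining range of $\ell$ follows from one more interpolation between the bounded norm at index $-s-3(\frac{1}{r}-\frac{1}{p})$ and the decaying critical norm. Your intermediate ``baseline rate'' step in the $r$-scale is essentially Theorem \ref{th3.1} and is not needed. To repair your proposal, drop the Fourier splitting and instead feed the embedded negative-norm bound into the interpolation/ODE machinery for $\mathcal{E}(t)$ in the $p$- and $q$-scales, as above.
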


We emphasize here that in \cite{ZZL15},  the authors in this paper and Zhang established global
well-posedness of the system \eqref{eq3.1}--\eqref{eq3.2} in the
critical Besov spaces
$\dot{B}^{-1+\frac{3}{p}}_{p,1}(\mathbb{R}^{3})\times(\dot{B}^{-2+\frac{3}{q}}_{q,1}(\mathbb{R}^{3}))^{2}$
with $1\leq p<\infty$ and
$1\leq q<6$, $q\leq p$ and $\frac{1}{p}+\frac{1}{q}> \frac{1}{3}$.
We relax the restrictive condition $q\leq p$  in
Theorem \ref{th3.1}. The main observation is that we can convert the estimation of  $\Delta\phi\nabla\phi$ into the estimation of
$v\nabla(-\Delta)^{-1}w+w\nabla(-\Delta)^{-1}v$ via the fifth equation of \eqref{eq3.1}, which has a nice structure as follows:  for $1\leq m\leq3$,
\begin{align*}
  \big(v\nabla(-\Delta)^{-1}w&+w\nabla(-\Delta)^{-1}v\big)_{m}=(-\Delta)\Big{\{}\big{(}(-\Delta)^{-1}v\big{)}\big{(}\partial_{m}(-\Delta)^{-1}w\big{)}\Big{\}}\\
  &+2\nabla\cdot\Big{\{}\big{(}(-\Delta)^{-1}v\big{)}\big{(}\partial_{m}\nabla(-\Delta)^{-1}w\big{)}\Big{\}}
  +\partial_{m}\Big{\{}\big{(}(-\Delta)^{-1}v\big{)}w\Big{\}}.
\end{align*}
Thanks to this observation, the condition $q\leq p$ can be removed.

Another important feature in Theorems \ref{th3.1} and \ref{th3.2} is that the negative Besov norms of the solutions of the system \eqref{eq3.1}--\eqref{eq3.2} are preserved along the time evolution and enhance the time decay rates, see Proposition \ref{pro4.6} below.

\section{Proofs of Theorems   \ref{th3.1} and \ref{th3.2}}

We aim at establishing two basic energy inequalities in the framework of Besov spaces, then prove Theorems \ref{th3.1} and \ref{th3.2}  by using the approach illustrated in Theorem \ref{th1.2}. For clarity of our statement, we leave  the proof of global well-posedness of the system \eqref{eq3.1}--\eqref{eq3.2} with small initial data  in Appendix.

\subsection{Lower-order derivative estimates}
We denote
$$
  \mathcal{E}(t):= \|u(t)\|_{\dot{B}^{-1+\frac{3}{p}}_{p,1}}+\|(v(t),w(t))\|_{\dot{B}^{-2+\frac{3}{q}}_{q,1}}
$$
and
$$
  Y(t):=\int_{0}^{t}\big( \|u(\tau)\|_{\dot{B}^{1+\frac{3}{p}}_{p,1}}+\|(v(\tau),w(\tau))\|_{\dot{B}^{\frac{3}{q}}_{q,1}}\big)d\tau.
$$

\begin{proposition}\label{pro4.1}
Let $p,q$ be two positive numbers such that $1\leq p<\infty$,
$1\leq q<6$, and
$$
  \frac{1}{p}+\frac{1}{q}> \frac{1}{3},\ \ \ \frac{1}{q}-\frac{1}{p}> -\min\{\frac{1}{3}, \frac{1}{2p}\}.
$$
 Assume that $u_{0}\in \dot{B}^{-1+\frac{3}{p}}_{p,1}(\mathbb{R}^{3})$ with $\nabla\cdot u_{0}=0$, $v_{0}, w_{0}\in \dot{B}^{-2+\frac{3}{q}}_{q,1}(\mathbb{R}^{3})$. Let $\eta$ be the number such that if
\begin{equation*}
  \|u_{0}\|_{\dot{B}^{-1+\frac{3}{p}}_{p,1}}+\|(v_{0},w_{0})\|_{\dot{B}^{-2+\frac{3}{q}}_{q,1}}\leq \eta,
\end{equation*}
then the system \eqref{eq3.1}--\eqref{eq3.2} admits a unique solution $(u,v,w)$.  In addition, there exist two constants $\kappa$ and $K$ such that the following inequality holds:
\begin{equation}\label{eq4.1}
   \frac{d}{dt}(e^{-KY(t)}\mathcal{E}(t))+\kappa e^{-KY(t)}\big(\|u(t)\|_{\dot{B}^{1+\frac{3}{p}}_{p,1}}+\|(v(t),w(t))\|_{\dot{B}^{\frac{3}{q}}_{q,1}}\big)\leq 0.
\end{equation}
\end{proposition}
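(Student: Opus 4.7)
The plan is to derive the pointwise-in-time inequality
\begin{equation*}
\frac{d}{dt}\mathcal{E}(t) + \kappa D(t) \leq K\,D(t)\mathcal{E}(t),\qquad D(t):=\|u(t)\|_{\dot{B}^{1+\frac{3}{p}}_{p,1}}+\|(v(t),w(t))\|_{\dot{B}^{\frac{3}{q}}_{q,1}},
\end{equation*}
and then, since $Y'(t)=D(t)$, multiply by the integrating factor $e^{-KY(t)}$ to obtain \eqref{eq4.1} directly. The global existence part of the statement is deferred to the Appendix; the smallness hypothesis enters the proof of \eqref{eq4.1} only through the a priori existence of the solution on $[0,\infty)$ with the claimed Besov regularity.

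First I would apply the dyadic block $\Delta_j$ to each of the first four equations of \eqref{eq3.1}. For the Navier--Stokes block, testing against $|\Delta_j u|^{p-2}\Delta_j u$, the pressure drops (modulo composing with the Leray projector), and the lower bound of \cite{D01},
\begin{equation*}
-\int_{\mathbb{R}^3}\Delta\Delta_j u\cdot|\Delta_j u|^{p-2}\Delta_j u\,dx \ge \kappa\,2^{2j}\|\Delta_j u\|_{L^p}^p,
\end{equation*}
gives, after division by $\|\Delta_j u\|_{L^p}^{p-1}$,
\begin{equation*}
\frac{d}{dt}\|\Delta_j u\|_{L^p}+\kappa\,2^{2j}\|\Delta_j u\|_{L^p}\lesssim \|\Delta_j(u\cdot\nabla u)\|_{L^p}+\|\Delta_j(\Delta\phi\nabla\phi)\|_{L^p}.
\end{equation*}
Multiplying by $2^{j(-1+3/p)}$ and taking the $\ell^1(\mathbb{Z})$ norm produces the skeleton estimate in $\dot{B}^{-1+\frac{3}{p}}_{p,1}$. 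Testing the $v$- and $w$-equations with $|\Delta_j v|^{q-2}\Delta_j v$ and $|\Delta_j w|^{q-2}\Delta_j w$ respectively, weighting by $2^{j(-2+3/q)}$ and summing yields the companion estimate in $\dot{B}^{-2+\frac{3}{q}}_{q,1}$ for $(v,w)$, up to nonlinear error terms.

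The main obstacle will be bounding these nonlinear contributions by $K\,D(t)\mathcal{E}(t)$. For the transport terms $u\cdot\nabla u$, $u\cdot\nabla v$, $u\cdot\nabla w$ I would use Bony's paradifferential decomposition together with the standard paraproduct and remainder estimates, followed by Besov interpolation, to reach bounds of the type
\begin{equation*}
\|u\cdot\nabla u\|_{\dot{B}^{-1+\frac{3}{p}}_{p,1}}\lesssim \|u\|_{\dot{B}^{-1+\frac{3}{p}}_{p,1}}\|u\|_{\dot{B}^{1+\frac{3}{p}}_{p,1}},
\end{equation*}
and the analogous mixed-exponent versions for the coupling terms; the two arithmetic hypotheses on $(p,q)$ are precisely what is needed so that the H\"older-type index constraints for the product laws are admissible. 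For the Lorentz force $\Delta\phi\nabla\phi$ and the drift terms $\nabla\cdot(v\nabla\phi)$, $\nabla\cdot(w\nabla\phi)$, I invoke the identity $\phi=(-\Delta)^{-1}(w-v)$ and the algebraic rewriting displayed in the introduction,
\begin{equation*}
\bigl(v\nabla(-\Delta)^{-1}w+w\nabla(-\Delta)^{-1}v\bigr)_m = (-\Delta)\{\cdots\}+2\,\nabla\cdot\{\cdots\}+\partial_m\{\cdots\},
\end{equation*}
which expresses the Lorentz force as a sum of derivatives of products of the smoother potentials $(-\Delta)^{-1}v$ and $(-\Delta)^{-1}w$. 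This is the crucial manoeuvre that removes the constraint $q\le p$ from the earlier paper and allows both species to enter symmetrically; analogous rearrangements handle the drift terms.

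Combining the dyadic energy estimates with the above nonlinear bounds produces the target inequality $\mathcal{E}'(t)+\kappa D(t)\leq K D(t)\mathcal{E}(t)$ for some universal constants $\kappa,K>0$. Multiplying by $e^{-KY(t)}$ and using $Y'(t)=D(t)$,
\begin{equation*}
\frac{d}{dt}\bigl(e^{-KY(t)}\mathcal{E}(t)\bigr)+\kappa\,e^{-KY(t)}D(t) = e^{-KY(t)}\bigl(\mathcal{E}'(t)+\kappa D(t)-KD(t)\mathcal{E}(t)\bigr)\le 0,
\end{equation*}
which is exactly \eqref{eq4.1}.
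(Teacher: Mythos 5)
Your proposal is correct and follows essentially the same route as the paper: dyadic $L^p$/$L^q$ energy estimates using the Danchin--Planchon lower bound for the dissipation, Bony decompositions for the transport terms, the rewriting of the Lorentz force via $\phi=(-\Delta)^{-1}(w-v)$ and the displayed algebraic identity to avoid $q\le p$, and finally the integrating factor $e^{-KY(t)}$ with $K$ chosen larger than the constant in the nonlinear bounds. The only cosmetic difference is that the paper carries out the same computation in the weighted unknowns $\widetilde{u}=e^{-KY(t)}u$, etc., from the start (so the $-KY'(t)\widetilde{u}$ damping term appears inside the equations), whereas you derive the plain inequality $\mathcal{E}'+\kappa D\le KD\mathcal{E}$ first and multiply by the integrating factor at the end; the two are equivalent.
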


In order to prove Proposition \ref{pro4.1}, we define
\begin{equation*}
  \widetilde{u}:=e^{-KY(t)}u,\ \ \widetilde{v}:= e^{-KY(t)}v,\ \ \widetilde{w}:= e^{-KY(t)}w, \ \ \widetilde{\Pi}:= e^{-KY(t)}\Pi,\ \ \widetilde{\phi}:= e^{-KY(t)}\phi,
\end{equation*}
where $K$ is a constant to be specified later. Then we see that $(\widetilde{u},  \widetilde{v}, \widetilde{w})$ satisfies the following equations:
\begin{equation}\label{eq4.2}
\begin{cases}
  \partial_{t} \widetilde{u}+u\cdot\nabla \widetilde{u}-\Delta
  \widetilde{u}+\nabla \widetilde{\Pi}=\Delta
  \widetilde{\phi}\nabla\phi-KY'(t)\widetilde{u},\\
  \nabla\cdot \widetilde{u}=0,\\
  \partial_{t} \widetilde{v}+u\cdot \nabla
  \widetilde{v}=\nabla\cdot(\nabla\widetilde{ v}-\widetilde{v}\nabla \phi)-KY'(t)\widetilde{v},\\
  \partial_{t} \widetilde{w}+u\cdot \nabla
  \widetilde{w}=\nabla\cdot(\nabla \widetilde{w}+\widetilde{w}\nabla \phi)-KY'(t)\widetilde{w},\\
  \Delta \widetilde{\phi}=\widetilde{v}-\widetilde{w}.
\end{cases}
\end{equation}

\begin{lemma}\label{le4.2}
Let $1\leq p<\infty$.  Then
\begin{align}\label{eq4.3}
   \|\Delta_{j}(u\cdot\nabla\widetilde{u})\|_{L^{p}}\lesssim  2^{(1-\frac{3}{p})j}d_{j}Y'(t)\|\widetilde{u}\|_{\dot{B}^{-1+\frac{3}{p}}_{p,1}}.
\end{align}
\end{lemma}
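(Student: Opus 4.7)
The estimate \eqref{eq4.3} is the Littlewood--Paley block form of the product inequality $\|u\cdot\nabla\widetilde u\|_{\dot B^{-1+3/p}_{p,1}}\lesssim \|u\|_{\dot B^{1+3/p}_{p,1}}\|\widetilde u\|_{\dot B^{-1+3/p}_{p,1}}$ (only the $u$--piece of $Y'(t)$ appears on the right because the left side involves only $u$ and $\widetilde u$). My approach would be Bony's paraproduct calculus combined with the divergence-free constraint on $u$, which is the standard route for transport-type nonlinearities in Besov spaces. The first step is to use $\nabla\cdot u=0$ to rewrite $u\cdot\nabla\widetilde u=\nabla\cdot(u\otimes\widetilde u)$; after applying $\Delta_j$, Bernstein's inequality extracts the factor $2^j$ that precisely matches the weight $2^{(1-3/p)j}$ on the right of \eqref{eq4.3}, reducing matters to the tensorial estimate $\|u\otimes\widetilde u\|_{\dot B^{3/p}_{p,1}}\lesssim \|u\|_{\dot B^{1+3/p}_{p,1}}\|\widetilde u\|_{\dot B^{-1+3/p}_{p,1}}$.

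Next I would decompose $u\otimes\widetilde u = T_u\widetilde u+T_{\widetilde u}u+R(u,\widetilde u)$ by Bony and estimate each piece at frequency $2^j$, using the almost-orthogonality relations \eqref{eq2.2}. For $T_u\widetilde u$ the spectral support restricts the inner sum to $|j'-j|\le 4$, and Hölder gives $\|\Delta_j T_u\widetilde u\|_{L^p}\lesssim\sum_{|j'-j|\le 4}\|S_{j'-1}u\|_{L^\infty}\|\Delta_{j'}\widetilde u\|_{L^p}$. Controlling $\|S_{j'-1}u\|_{L^\infty}$ via Bernstein and the splitting $2^{3k/p}\|\Delta_k u\|_{L^p}=2^{-k}(2^{k(1+3/p)}\|\Delta_k u\|_{L^p})$ turns the bound into a discrete convolution of the $\ell^1$ sequences $a_k=2^{k(1+3/p)}\|\Delta_k u\|_{L^p}$ and $b_{j'}=2^{j'(-1+3/p)}\|\Delta_{j'}\widetilde u\|_{L^p}$. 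The paraproduct $T_{\widetilde u}u$ is treated symmetrically, using an appropriate $L^{p_1}$--$L^{p_2}$ Hölder pair so that the low-frequency part of $\widetilde u$ does not need to be placed in $L^\infty$. The remainder $R(u,\widetilde u)$ is spectrally supported on $j'\ge j-N_0$, and after one $L^\infty$--$L^p$ Hölder split combined with Bernstein it yields the same convolution structure. Reassembly via Young's convolution inequality on $\ell^1(\mathbb Z)$ then produces a summable sequence $(d_j)_{j\in\mathbb Z}$ with the claimed product bound.

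The principal obstacle is that $\dot B^{1+3/p}_{p,1}$ sits strictly above the critical embedding $\dot B^{3/p}_{p,1}\hookrightarrow L^\infty$, so the naive estimate $\|S_{j'-1}u\|_{L^\infty}\lesssim\|u\|_{\dot B^{3/p}_{p,1}}$ cannot be dominated by $\|u\|_{\dot B^{1+3/p}_{p,1}}$ alone and the low-frequency tails in the Bernstein sum $\sum_{k\le j'-2}2^{-k}a_k$ diverge without further input. The divergence-free rewriting of the first step is exactly what rescues the argument: the outer $\nabla\cdot$ introduces a compensating $2^j$ factor so that the problematic $2^{-k}$ weights are precisely cancelled in the final convolution, and the product closes at the correct scaling-critical exponent. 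A secondary delicacy arises in the remainder term for $p$ near the borderline values where the condition $s_1+s_2>0$ of the classical Besov product law becomes tight; there the Hölder split must be tuned carefully so that the $\nabla\cdot$ gain keeps the exponent balance intact.
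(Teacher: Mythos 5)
Your overall architecture (divergence-free rewriting $u\cdot\nabla\widetilde u=\nabla\cdot(u\otimes\widetilde u)$, Bony decomposition, Bernstein, $\ell^1$ convolution) matches the paper, and your treatment of the remainder is fine. But there is a genuine gap in the paraproduct term $T_u\widetilde u$, where the low-frequency factor is $u$. You propose to bound $\|S_{j'-1}u\|_{L^\infty}\lesssim\sum_{k\le j'-2}2^{3k/p}\|\Delta_k u\|_{L^p}=\sum_{k\le j'-2}2^{-k}a_k$ with $a_k=2^{(1+3/p)k}\|\Delta_k u\|_{L^p}\in\ell^1$, and you correctly observe that this sum diverges. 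Your proposed rescue, however, does not work: the factor $2^{j}$ coming from the outer $\nabla\cdot$ is attached to the output frequency $j\approx j'$ and is independent of $k$, so it cannot cancel the weights $2^{j-k}\to\infty$ as $k\to-\infty$; the sum $\sum_{k\le j'-2}2^{j-k}a_k$ is not a convolution against a summable kernel and is infinite for generic $\ell^1$ sequences $a$. Indeed, the underlying bilinear estimate $\|T_f g\|_{\dot B^{3/p}_{p,1}}\lesssim\|f\|_{\dot B^{1+3/p}_{p,1}}\|g\|_{\dot B^{-1+3/p}_{p,1}}$ is \emph{false} for independent $f,g$: taking $\hat f$ concentrated at frequency $1/R$ with $\|f\|_{L^\infty}=1$ gives $\|f\|_{\dot B^{1+3/p}_{p,1}}\approx R^{-1}\to0$ while $T_f g\approx g$ does not shrink.

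What saves the lemma, and what the paper uses (somewhat tacitly), is the structural fact that $\widetilde u=e^{-KY(t)}u$ is a time-dependent scalar multiple of $u$, so that $T_u\widetilde u=T_{\widetilde u}u$ exactly; this is why the paper writes $u\otimes\widetilde u=2T_{\widetilde u}u+R(\widetilde u,u)$. With the roles arranged this way, the low-frequency factor is always the one measured in the \emph{negative}-regularity norm, and Bernstein gives the convergent sum
\begin{equation*}
  \|S_{j'-1}\widetilde u\|_{L^\infty}\lesssim\sum_{k\le j'-2}2^{k}\,\bigl(2^{(-1+\frac{3}{p})k}\|\Delta_k\widetilde u\|_{L^p}\bigr)\lesssim 2^{j'}\|\widetilde u\|_{\dot B^{-1+\frac{3}{p}}_{p,1}},
\end{equation*}
while the high-frequency factor $\|\Delta_{j'}u\|_{L^p}$ carries the $\dot B^{1+3/p}_{p,1}$ norm that is absorbed into $Y'(t)$. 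You should replace your treatment of $T_u\widetilde u$ by this symmetrization (or equivalently, factor out $e^{-KY(t)}$ at the start and prove the estimate for $u\otimes u$); without it the paraproduct term cannot be closed.
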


\begin{proof}
Thanks to Bony's paraproduct decomposition, we have
\begin{equation*}
  u\cdot\nabla\widetilde{u}=\nabla\cdot(u\otimes\widetilde{u})=\nabla\cdot\big(2T_{\widetilde{u}}u+R(\tilde{u},u)\big).
\end{equation*}
Moreover, applying Lemma \ref{le5.1} yields that
\begin{align*}
  \|\Delta_{j}\nabla\cdot(T_{\widetilde{u}}u)\|_{L^{p}}&\lesssim  2^{j}\sum_{|j-j'|\leq 4}\|S_{j'-1}\widetilde{u}\|_{L^{\infty}}\|\Delta_{j'}u\|_{L^{p}}\nonumber\\
   &\lesssim  2^{j}\sum_{|j-j'|\leq 4}\sum_{k\leq j'-2}2^{k}2^{(-1+\frac{3}{p})k}\|\Delta_{k}\widetilde{u}\|_{L^{p}}\|\Delta_{j'}u\|_{L^{p}}\nonumber\\
   &\lesssim  2^{j}\sum_{|j-j'|\leq 4}2^{j'}\|\Delta_{j'}u\|_{L^{p}}\|\widetilde{u}\|_{\dot{B}^{-1+\frac{3}{p}}_{p,1}}\nonumber\\
   &\lesssim  2^{(1-\frac{3}{p})j}d_{j}\|u\|_{\dot{B}^{1+\frac{3}{p}}_{p,1}}\|\widetilde{u}\|_{\dot{B}^{-1+\frac{3}{p}}_{p,1}}\nonumber\\
   &\lesssim  2^{(1-\frac{3}{p})j}d_{j}Y'(t)\|\widetilde{u}\|_{\dot{B}^{-1+\frac{3}{p}}_{p,1}}.
\end{align*}
To estimate the remaining term $R(\widetilde{u},u)$, in the case $1\leq p<2$, there exists $2<p'\leq\infty$
such that $\frac{1}{p}+\frac{1}{p'}=1$, thus we can deduce from Lemma \ref{le5.1}  that
\begin{align*}
  \|\Delta_{j}\nabla\cdot R(\widetilde{u}, u)\|_{L^{p}}&\lesssim
   2^{(4-\frac{3}{p})j}\sum_{j'\geq j-N_{0}}\|\Delta_{j'}\widetilde{u}\|_{L^{p'}}\|\widetilde{\Delta}_{j'}u\|_{L^{p}}\nonumber\\
   &\lesssim  2^{(4-\frac{3}{p})j}\sum_{j'\geq j-N_{0}}2^{(-3+\frac{6}{p})j'}\|\Delta_{j'}\widetilde{u}\|_{L^{p}}\|\widetilde{\Delta}_{j'}u\|_{L^{p}}\nonumber\\
   &\lesssim  2^{(1-\frac{3}{p})j}\sum_{j'\geq j-N_{0}}2^{-3(j'-j)}2^{(-1+\frac{3}{p})j'}\|\Delta_{j'}\widetilde{u}\|_{L^{p}}2^{(1+\frac{3}{p})j'}\|\widetilde{\Delta}_{j'}u\|_{L^{p}}\nonumber\\
   &\lesssim  2^{(1-\frac{3}{p})j}d_{j}\|u\|_{\dot{B}^{1+\frac{3}{p}}_{p,1}}\|\widetilde{u}\|_{\dot{B}^{-1+\frac{3}{p}}_{p,1}}\nonumber\\
   &\lesssim  2^{(1-\frac{3}{p})j}d_{j}Y'(t)\|\widetilde{u}\|_{\dot{B}^{-1+\frac{3}{p}}_{p,1}}.
\end{align*}
If $2\leq p<\infty$, we estimate
\begin{align*}
  \|\Delta_{j}\nabla\cdot R(\widetilde{u}, u)\|_{L^{p}}&\lesssim
   2^{(1+\frac{3}{p})j}\sum_{j'\geq j-N_{0}}\|\Delta_{j'}\widetilde{u}\|_{L^{p}}\|\widetilde{\Delta}_{j'}u\|_{L^{p}}\nonumber\\
   &\lesssim  2^{(1-\frac{3}{p})j}\sum_{j'\geq j-N_{0}}2^{-\frac{6}{p}(j'-j)}2^{(-1+\frac{3}{p})j'}\|\Delta_{j'}\widetilde{u}\|_{L^{p}}2^{(1+\frac{3}{p})j'}\|\widetilde{\Delta}_{j'}u\|_{L^{p}}\nonumber\\
   &\lesssim  2^{(1-\frac{3}{p})j}d_{j}Y'(t)\|\widetilde{u}\|_{\dot{B}^{-1+\frac{3}{p}}_{p,1}}.
\end{align*}
This completes the proof of Lemma \ref{le4.2}.
\end{proof}

\begin{lemma}\label{le4.3}
Let $1\leq p, q<\infty$ and $\frac{1}{q}-\frac{1}{p}\geq-\min\{\frac{1}{3}, \frac{1}{2p}\}$.  Then
\begin{align}\label{eq4.4}
   \|\Delta_{j}(\widetilde{v}\nabla(-\Delta)^{-1}w+\widetilde{w}\nabla(-\Delta)^{-1}v)\|_{L^{p}}\lesssim  2^{(1-\frac{3}{p})j}d_{j}Y'(t)\|(\widetilde{v},\widetilde{w})\|_{\dot{B}^{-2+\frac{3}{q}}_{q,1}}.
\end{align}
\end{lemma}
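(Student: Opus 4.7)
The plan is to exploit the algebraic identity spelled out in the introduction, which rewrites, for each component $1\le m\le 3$,
\begin{align*}
\bigl(\widetilde v\,\nabla(-\Delta)^{-1}w+\widetilde w\,\nabla(-\Delta)^{-1}v\bigr)_m
&=(-\Delta)\bigl\{\bigl((-\Delta)^{-1}\widetilde v\bigr)\bigl(\partial_m(-\Delta)^{-1}w\bigr)\bigr\}\\
&\quad+2\nabla\cdot\bigl\{\bigl((-\Delta)^{-1}\widetilde v\bigr)\bigl(\partial_m\nabla(-\Delta)^{-1}w\bigr)\bigr\}\\
&\quad+\partial_m\bigl\{\bigl((-\Delta)^{-1}\widetilde v\bigr)w\bigr\},
\end{align*}
together with the symmetric identity obtained by swapping $(\widetilde v,w)$ with $(\widetilde w,v)$. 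The point of this rearrangement is that in each of the six resulting products one factor is the smoothed object $(-\Delta)^{-1}\widetilde v$ (resp.\ $(-\Delta)^{-1}\widetilde w$), which lies in $\dot B^{3/q}_{q,1}\hookrightarrow L^{\infty}$ and carries the small norm $\|(\widetilde v,\widetilde w)\|_{\dot B^{-2+3/q}_{q,1}}$; the companion factor is either $\partial_m(-\Delta)^{-1}w$, $\partial_m\nabla(-\Delta)^{-1}w$, or $w$ itself, and in each case its $\dot B^{3/q}_{q,1}$-level norm is bounded by $\|(v,w)\|_{\dot B^{3/q}_{q,1}}\le Y'(t)$.

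Next, to each product I would apply Bony's paradecomposition $fg=T_fg+T_gf+R(f,g)$ and then $\Delta_j$, following the template of Lemma \ref{le4.2}. In the paraproducts, the low-frequency factor is bounded in $L^{\infty}$ using $\|S_{j'-1}((-\Delta)^{-1}\widetilde v)\|_{L^{\infty}}\lesssim\|\widetilde v\|_{\dot B^{-2+3/q}_{q,1}}$ or $\|S_{j'-1}w\|_{L^{\infty}}\lesssim\|w\|_{\dot B^{3/q}_{q,1}}$, and the dyadic block at index $j'$ of the companion factor is estimated in $L^p$ via Bernstein's inequality (Lemma \ref{le5.1}), which contributes the trade factor $2^{3(1/q-1/p)j'}$ to pass from $L^q$ to $L^p$. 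The three outer derivatives $(-\Delta)$, $\nabla\cdot$, $\partial_m$ produce respectively $2^{2j}$, $2^{j}$, $2^{j}$ at dyadic block $j$, which exactly balance the regularity gained inside the brackets so that the final weight lands on $2^{(1-3/p)j}$, as required by \eqref{eq4.4}.

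The main obstacle will be the frequency exchange between $L^p$ and $L^q$, and this is where the quantitative restriction $\frac{1}{q}-\frac{1}{p}\ge -\min\{\tfrac{1}{3},\tfrac{1}{2p}\}$ is used. In the paraproducts the low-frequency summation $\sum_{k\le j'-2}$ converges provided the Bernstein-adjusted exponent stays nonnegative; tracing through the three products shows that this amounts precisely to $\frac{1}{q}-\frac{1}{p}\ge -\tfrac{1}{3}$. For the remainder $R(f,g)$, one must split into the two sub-cases $1\le p<2$ and $2\le p<\infty$ as in the proof of Lemma \ref{le4.2}, invoking Hölder with conjugate exponent $p'$ in the first case; the critical threshold for convergence of the geometric series $\sum_{j'\ge j-N_0}2^{-\alpha(j'-j)}$ then shifts to $\frac{1}{q}-\frac{1}{p}\ge -\tfrac{1}{2p}$, which is exactly the second half of the hypothesis. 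Summing the contributions of all six products then yields \eqref{eq4.4}.
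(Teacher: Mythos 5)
Your starting point --- the algebraic identity rewriting the symmetrized product in divergence form --- is indeed the paper's key observation, but you deploy it in the wrong place, and the resulting argument breaks down in the paraproduct terms. The paper applies the identity \emph{only to the remainder} of Bony's decomposition (its term $J_{3}$), where the two dyadic frequencies are comparable, so the negative powers of $2^{j'}$ produced by $(-\Delta)^{-1}$ can be absorbed before summing over $j'\ge j-N_{0}$. The paraproducts $J_{1},J_{2}$ are estimated \emph{directly on the original form} $\widetilde{v}\nabla(-\Delta)^{-1}w$, where the low-frequency factor is either $\widetilde{v}$ (regularity $-2+\tfrac{3}{q}$) or $\nabla(-\Delta)^{-1}\widetilde{w}$ (regularity $-1+\tfrac{3}{q}$); the low-frequency sums $\sum_{k\le j'-2}2^{(2+\frac{3}{q}-\frac{3}{p})k}d_{k}$ and $\sum_{k\le j'-2}2^{(1+\frac{3}{q}-\frac{3}{p})k}d_{k}$ then converge, the latter precisely under $\tfrac{1}{q}-\tfrac{1}{p}>-\tfrac{1}{3}$. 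If instead you apply the identity globally and then run Bony on, say, $\partial_{m}\{((-\Delta)^{-1}\widetilde{v})w\}$, the paraproduct $T_{(-\Delta)^{-1}\widetilde{v}}w$ has low-frequency factor $(-\Delta)^{-1}\widetilde{v}$ of \emph{positive} regularity $\tfrac{3}{q}$; since the high-frequency block $\Delta_{j'}w$ is only controlled in $L^{q}$ with $q>p$, the partial sums $S_{j'-1}((-\Delta)^{-1}\widetilde{v})$ must be measured in $L^{pq/(q-p)}$, and the corresponding sum $\sum_{k\le j'-2}2^{(\frac{3}{q}-\frac{3}{p})k}d_{k}$ diverges when $p<q$. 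So the rearrangement that rescues the remainder destroys the paraproducts.

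There is also a concretely invalid step: you propose to estimate the high-frequency dyadic block in $L^{p}$ "via Bernstein's inequality, which contributes the trade factor $2^{3(1/q-1/p)j'}$ to pass from $L^{q}$ to $L^{p}$." Lemma \ref{le5.1} only passes from $L^{a}$ to $L^{b}$ with $a\le b$; there is no inequality bounding $\|\Delta_{j'}w\|_{L^{p}}$ by $\|\Delta_{j'}w\|_{L^{q}}$ when $p<q$, and a factor $2^{3(\frac{1}{q}-\frac{1}{p})j'}$ with negative exponent would be a spurious gain. The paper instead uses H\"older with $\tfrac{1}{p}=\tfrac{1}{q}+\tfrac{q-p}{pq}$, keeps one factor in $L^{q}$, and applies the \emph{upward} Bernstein inequality $L^{q}\to L^{pq/(q-p)}$ to the other; this is admissible exactly when $\tfrac{pq}{q-p}\ge q$, i.e. $q\le 2p$, which is where the hypothesis $\tfrac{1}{q}-\tfrac{1}{p}\ge-\tfrac{1}{2p}$ actually enters (not through a geometric-series threshold in the remainder; the $1\le p<2$ versus $2\le p<\infty$ split you import from Lemma \ref{le4.2} is not used in this lemma). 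Finally, you never address the case $q\le p$, which the paper dispatches separately by writing the expression as $\nabla\cdot\big(\nabla(-\Delta)^{-1}v\,\nabla(-\Delta)^{-1}w\big)$ and invoking the embedding $\dot{B}^{-1+\frac{3}{q}}_{q,1}\hookrightarrow\dot{B}^{-1+\frac{3}{p}}_{p,1}$ so that Lemma \ref{le4.2} applies verbatim.
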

\begin{proof}
The case $1\leq q\leq p$ is simple. Indeed, based on the observation
\begin{equation*}
    v\nabla(-\Delta)^{-1}w+w\nabla(-\Delta)^{-1}v=\nabla\cdot\big(  \nabla(-\Delta)^{-1}v\nabla(-\Delta)^{-1}w\big)
\end{equation*}
and the imbedding relation $\dot{B}^{-1+\frac{3}{q}}_{q,1}(\mathbb{R}^{3})\hookrightarrow\dot{B}^{-1+\frac{3}{p}}_{p,1}(\mathbb{R}^{3})$, we see that
$ \nabla(-\Delta)^{-1}v$ play the same role as $u$ in Lemma \ref{le4.2}. Therefore, we get the desired inequality \eqref{eq4.4}. On the other hand, if  $1\leq p<q$, we resort to Bony's paraproduct decomposition to get
\begin{equation}\label{eq4.5}
    \widetilde{v}\nabla(-\Delta)^{-1}w+\widetilde{w}\nabla(-\Delta)^{-1}v:=J_{1}+J_{2}+J_{3},
\end{equation}
where
\begin{align*}
    J_{1}:&=\sum_{j'\in\mathbb{Z}}S_{j'-1}\widetilde{v}\nabla(-\Delta)^{-1}\Delta_{j'}w
    +S_{j'-1}\widetilde{w}\nabla(-\Delta)^{-1}\Delta_{j'}v,\\
    J_{2}:&=\sum_{j'\in\mathbb{Z}}\Delta_{j'}v\nabla(-\Delta)^{-1}S_{j'-1}\widetilde{w}
    +\Delta_{j'}w\nabla(-\Delta)^{-1}S_{j'-1}\widetilde{v},\\
    J_{3}:&=\sum_{j'\in\mathbb{Z}}\Delta_{j'}\widetilde{v}\nabla(-\Delta)^{-1}\widetilde{\Delta}_{j'}w
    +\Delta_{j'}\widetilde{w}\nabla(-\Delta)^{-1}\widetilde{\Delta}_{j'}v.
\end{align*}
For $J_{1}$, it suffices to deal with the first term  $\sum_{j'\in\mathbb{Z}}S_{j'-1}\widetilde{v}\nabla(-\Delta)^{-1}\Delta_{j'}w$ because of the second one can be done analogously. Using the conditions $1\leq p<q<\infty$ and $\frac{1}{q}-\frac{1}{p}\geq-\min\{\frac{1}{3}, \frac{1}{2p}\}$,  we derive from  Lemma \ref{le5.1} that
\begin{align*}
   \|\Delta_{j}\sum_{j'\in\mathbb{Z}}S_{j'-1}\widetilde{v}\nabla(-\Delta)^{-1}\Delta_{j'}w\|_{L^{p}} &\lesssim \sum_{|j-j'|\leq 4}\|S_{j'-1}\widetilde{v}\|_{L^{\frac{pq}{q-p}}}\|\nabla(-\Delta)^{-1}\Delta_{j'}w\|_{L^{q}}\nonumber\\
    &\lesssim \sum_{|j-j'|\leq 4}\sum_{k\leq j'-2}2^{3(\frac{1}{q}-\frac{q-p}{pq})k}\|\Delta_{k}\widetilde{v}\|_{L^{q}}2^{-j'}\|\Delta_{j'}w\|_{L^{q}}\nonumber\\
  & \lesssim \sum_{|j-j'|\leq 4}\sum_{k\leq j'-2}2^{(2+\frac{3}{q}-\frac{3}{p})k}2^{(-2+\frac{3}{q})k}\|\Delta_{k}\widetilde{v}\|_{L^{q}}2^{-j'}\|\Delta_{j'}w\|_{L^{q}}\nonumber\\
  &\lesssim \sum_{|j-j'|\leq 4}2^{(1+\frac{3}{q}-\frac{3}{p})j'}\|\Delta_{j'}w\|_{L^{q}}\|\widetilde{v}\|_{\dot{B}^{-2+\frac{3}{q}}_{q,1}}\nonumber\\
  &\lesssim 2^{(1-\frac{3}{p})j}d_{j}\|w\|_{\dot{B}^{\frac{3}{q}}_{q,1}}\|\widetilde{v}\|_{\dot{B}^{-2+\frac{3}{q}}_{q,1}}\nonumber\\
  &\lesssim 2^{(1-\frac{3}{p})j}d_{j}Y'(t)\|\widetilde{v}\|_{\dot{B}^{-2+\frac{3}{q}}_{q,1}},
\end{align*}
which directly leads to
\begin{align}\label{eq4.6}
  \|\Delta_{j}J_{1}\|_{L^{p}}\lesssim
       2^{(1-\frac{3}{p})j}d_{j}Y'(t)\|(\widetilde{v}, \widetilde{w})\|_{\dot{B}^{-2+\frac{3}{q}}_{q,1}}.
\end{align}
Similarly, for the first term of $J_{2}$,  we get
\begin{align*}
    \|\Delta_{j}\sum_{j'\in\mathbb{Z}}\Delta_{j'}v\nabla(-\Delta)^{-1}S_{j'-1}\widetilde{w}\|_{L^{p}} &\lesssim \sum_{|j-j'|\leq 4}\|\Delta_{j'}v\|_{L^{q}}\|\nabla(-\Delta)^{-1}S_{j'-1}\widetilde{w}\|_{L^{\frac{pq}{q-p}}}\nonumber\\
     & \lesssim \sum_{|j-j'|\leq 4}\|\Delta_{j'}v\|_{L^{q}}\sum_{k\leq j'-2}2^{[-1+3(\frac{1}{q}-\frac{q-p}{pq})]k}\|\Delta_{k}\widetilde{w}\|_{L^{q}}\nonumber\\
  &\lesssim \sum_{|j-j'|\leq 4}\|\Delta_{j'}v\|_{L^{q}}\sum_{k\leq j'-2}2^{(1+\frac{3}{q}-\frac{3}{p})k}2^{(-2+\frac{3}{q})k}\|\Delta_{k}\widetilde{w}\|_{L^{q}}\nonumber\\
  &\lesssim \sum_{|j-j'|\leq 4}2^{(1+\frac{3}{q}-\frac{3}{p})j'}\|\Delta_{j'}v\|_{L^{q}}\|\widetilde{w}\|_{\dot{B}^{-2+\frac{3}{q}}_{q,1}}\nonumber\\
  &\lesssim 2^{(1-\frac{3}{p})j}d_{j}\|v\|_{\dot{B}^{\frac{3}{q}}_{q,1}}\|\widetilde{w}\|_{\dot{B}^{-2+\frac{3}{q}}_{q,1}}\nonumber\\
  &\lesssim 2^{(1-\frac{3}{p})j}d_{j}Y'(t)\|\widetilde{w}\|_{\dot{B}^{-2+\frac{3}{q}}_{q,1}},
\end{align*}
which yields that
\begin{align}\label{eq4.7}
     \|\Delta_{j}J_{2}\|_{L^{p}}\lesssim 2^{(1-\frac{3}{p})j}d_{j}Y'(t)\|(\widetilde{v}, \widetilde{w})\|_{\dot{B}^{-2+\frac{3}{q}}_{q,1}}.
\end{align}
Finally we tackle with the most difficult term $J_{3}$,  the interesting observation is that we can split $J_{3}$ into the following three terms for $m=1,2,3$:
\begin{equation}\label{eq4.8}
  J_{3}:=K_{1}+K_{2}+K_{3},
\end{equation}
where
\begin{align*}
    K_{1}:&=\sum_{j'\in\mathbb{Z}}(-\Delta)\Big{\{}\big{(}(-\Delta)^{-1}\Delta_{j'}\widetilde{v}\big{)}\big{(}\partial_{m}(-\Delta)^{-1}\widetilde{\Delta}_{j'}w\big{)}\Big{\}},\\
    K_{2}:&=\sum_{j'\in\mathbb{Z}}2\nabla\cdot\Big{\{}\big{(}(-\Delta)^{-1}\Delta_{j'}\widetilde{v}\big{)}\big{(}\partial_{m}\nabla(-\Delta)^{-1}\widetilde{\Delta}_{j'}w\big{)}\Big{\}},\\
    K_{3}:&=\sum_{j'\in\mathbb{Z}}\partial_{m}\Big{\{}\big{(}(-\Delta)^{-1}\Delta_{j'}\widetilde{v}\big{)}\widetilde{\Delta}_{j'}w\Big{\}}.
\end{align*}
Since $K_{2}$ can be treated similarly to $K_{3}$, we treat $K_{1}$ and $K_{3}$
only. It follows from Lemma \ref{le5.1} that
\begin{align*}
    \|\Delta_{j}K_{1}\|_{L^{p}}&\lesssim 2^{2j}\sum_{j'\geq j-N_{0}}
    \|(-\Delta)^{-1}\Delta_{j'}\widetilde{v}\|_{L^{\frac{pq}{q-p}}}\|\partial_{m}(-\Delta)^{-1}\widetilde{\Delta}_{j'}w\|_{L^{q}}\nonumber\\
    &\lesssim 2^{2j}\sum_{j'\geq j-N_{0}}2^{(-2+\frac{6}{q}-\frac{3}{p})j'}
    \|\Delta_{j'}\widetilde{v}\|_{L^{q}}2^{-j'}\|\widetilde{\Delta}_{j'}w\|_{L^{q}}\nonumber\\
    &\lesssim 2^{2j}\sum_{j'\geq j-N_{0}}2^{-(1+\frac{3}{p})j'}2^{(-2+\frac{3}{q})j'}
    \|\Delta_{j'}\widetilde{v}\|_{L^{q}}2^{\frac{3j'}{q}}\|\widetilde{\Delta}_{j'}w\|_{L^{q}}\nonumber\\
    &\lesssim 2^{(1-\frac{3}{p})j}d_{j}\|w\|_{\dot{B}^{\frac{3}{q}}_{q,1}}\|\widetilde{v}\|_{\dot{B}^{-2+\frac{3}{q}}_{q,1}}\nonumber\\
    &\lesssim 2^{(1-\frac{3}{p})j}d_{j}Y'(t)\|\widetilde{v}\|_{\dot{B}^{-2+\frac{3}{q}}_{q,1}},
\end{align*}
\begin{align*}
    \|\Delta_{j}K_{3}\|_{L^{p}}&\lesssim 2^{j}\sum_{j'\geq j-N_{0}}
    \|(-\Delta)^{-1}\Delta_{j'}\widetilde{v}\|_{L^{\frac{pq}{q-p}}}\|\widetilde{\Delta}_{j'}w\|_{L^{q}}\nonumber\\
   &\lesssim 2^{j}\sum_{j'\geq j-N_{0}}2^{-\frac{3}{p}j'}2^{(-2+\frac{3}{q})j'}
    \|\Delta_{j'}\widetilde{v}\|_{L^{q}}2^{\frac{3}{q}j'}\|\widetilde{\Delta}_{j'}w\|_{L^{q}}\nonumber\\
    &\lesssim 2^{(1-\frac{3}{p})j}d_{j}\|w\|_{\dot{B}^{\frac{3}{q}}_{q,1}}\|\widetilde{v}\|_{\dot{B}^{-2+\frac{3}{q}}_{q,1}}\nonumber\\
    &\lesssim 2^{(1-\frac{3}{p})j}d_{j}Y'(t)\|\widetilde{v}\|_{\dot{B}^{-2+\frac{3}{q}}_{q,1}}.
\end{align*}
As a consequence,  we deduce from \eqref{eq4.8} that
\begin{align}\label{eq4.9}
      \|\Delta_{j} J_{3}\|_{L^{p}}\lesssim
      2^{(1-\frac{3}{p})j}d_{j}Y'(t)\|\widetilde{v}\|_{\dot{B}^{-2+\frac{3}{q}}_{q,1}}.
\end{align}
Hence, plugging \eqref{eq4.6}, \eqref{eq4.7} and \eqref{eq4.9} into \eqref{eq4.5}, we obtain \eqref{eq4.4}. The proof of Lemma \ref{le4.3}
is complete.
\end{proof}
\begin{lemma}\label{le4.4}
Let $1\leq p,q<\infty$ and
$\frac{1}{p}+\frac{1}{q}>\frac{1}{3}$. Then we have
\begin{equation}\label{eq4.10}
   \|\Delta_{j}(u\cdot\nabla
   \widetilde{v})\|_{L^{q}}\lesssim 2^{(2-\frac{3}{q})j}d_{j}Y'(t)\big(\|\widetilde{u}\|_{\dot{B}^{-1+\frac{3}{p}}_{p,1}}+
   \|\widetilde{v}\|_{\dot{B}^{-2+\frac{3}{q}}_{q,1}}\big).
\end{equation}
\end{lemma}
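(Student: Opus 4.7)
The plan is to parallel the strategy of Lemmas \ref{le4.2} and \ref{le4.3}: rewrite $u\cdot\nabla\widetilde{v}$ as the divergence of a bilinear product, apply Bony's paraproduct decomposition, and estimate each piece via Hölder and Bernstein. Since $\nabla\cdot u=0$, I have $u\cdot\nabla\widetilde{v}=\nabla\cdot(u\widetilde{v})$, and the identity $u\widetilde{v}=\widetilde{u}v$ (both equal $e^{-KY(t)}uv$) lets me shift the damping factor onto whichever slot is convenient. I will work primarily with the decomposition
\begin{equation*}
  \widetilde{u}v=T_{\widetilde{u}}v+T_v\widetilde{u}+R(\widetilde{u},v),
\end{equation*}
aiming to prove that each piece is bounded in $L^q$ by $2^{(1-3/q)j}d_j$ times a product of norms from the right-hand side of \eqref{eq4.10}; the $\nabla\cdot$ in front will contribute the missing factor $2^j$, producing the target exponent $2-3/q$.

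The paraproduct $T_{\widetilde{u}}v$ is the simplest: Bernstein combined with the $\ell^1$ summability built into $\dot{B}^{-1+3/p}_{p,1}$ gives $\|S_{j'-1}\widetilde{u}\|_{L^\infty}\lesssim \sum_{k\le j'-2}2^{3k/p}\|\Delta_k\widetilde{u}\|_{L^p}\lesssim 2^{j'}\|\widetilde{u}\|_{\dot{B}^{-1+3/p}_{p,1}}$, which paired with $\|\Delta_{j'}v\|_{L^q}\lesssim c_{j'}2^{-3j'/q}\|v\|_{\dot{B}^{3/q}_{q,1}}$ yields the desired $\|\widetilde{u}\|\,\|v\|$-type bound. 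For the remainder $R(\widetilde{u},v)$ I will copy the trick used for $R$ in Lemma \ref{le4.2}: Hölder's inequality in $L^{q_0}$ with $1/q_0=1/p+1/q$, followed by Bernstein lifting the $\Delta_j$-localized piece from $L^{q_0}$ up to $L^q$ at cost $2^{3j/p}$, reduces matters to summing $\sum_{j'\ge j-N_0}c_{j'}2^{(1-3/p-3/q)j'}$; the exponent $1-3/p-3/q$ is negative precisely under the hypothesis $\frac{1}{p}+\frac{1}{q}>\frac{1}{3}$, so this geometric series is $\ell^1$ and is dominated by $2^{(1-3/p-3/q)j}$, producing the required $2^{(1-3/q)j}d_j$ control.

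The term $T_v\widetilde{u}$ is the most delicate, since the low-frequency $S_{j'-1}v$ and the high-frequency $\Delta_{j'}\widetilde{u}$ live in Lebesgue spaces whose compatibility depends on the ordering of $p$ and $q$. When $p\le q$, the embedding $\dot{B}^{3/q}_{q,1}\hookrightarrow L^\infty$ provides $\|S_{j'-1}v\|_{L^\infty}\lesssim \|v\|_{\dot{B}^{3/q}_{q,1}}$ and Bernstein gives $\|\Delta_{j'}\widetilde{u}\|_{L^q}\lesssim 2^{3j'(1/p-1/q)}\|\Delta_{j'}\widetilde{u}\|_{L^p}$, which closes the estimate into the $\|\widetilde{u}\|_{\dot{B}^{-1+3/p}_{p,1}}$ slot. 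When $q<p$, Bernstein cannot be used in that direction; here I will invoke the mirror decomposition $u\widetilde{v}=T_u\widetilde{v}+T_{\widetilde{v}}u+R(u,\widetilde{v})$, so that the problematic term becomes $T_{\widetilde{v}}u$. In that paraproduct the high-frequency factor $\Delta_{j'}u$ is naturally in $L^p$, while $\|S_{j'-1}\widetilde{v}\|_{L^\alpha}$ with $\alpha=pq/(p-q)>0$ is controlled by Bernstein from $L^q$ to $L^\alpha$ applied to $\|\Delta_k\widetilde{v}\|_{L^q}\lesssim c_k 2^{(2-3/q)k}\|\widetilde{v}\|_{\dot{B}^{-2+3/q}_{q,1}}$, producing an $\|\widetilde{v}\|$-factor that accounts for the second term on the right-hand side of \eqref{eq4.10}. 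This case-splitting according to the ordering of $p$ and $q$ is the only delicate step; once the three pieces are summed and multiplied by the $2^j$ from $\nabla\cdot$, the bound \eqref{eq4.10} follows.
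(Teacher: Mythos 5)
Your overall architecture (Bony decomposition, Bernstein and H\"older estimates, with the hypothesis $\frac1p+\frac1q>\frac13$ entering through the remainder term) is the same as the paper's, but your opening move --- rewriting $u\cdot\nabla\widetilde v=\nabla\cdot(u\widetilde v)$ and deferring the derivative to a final factor of $2^{j}$ --- creates a genuine gap in the case $q<p$. The paper decomposes $u\cdot\nabla\widetilde v=T_{\widetilde u}\nabla v+T_{\nabla\widetilde v}u+R(u,\nabla\widetilde v)$, keeping the gradient attached to $\widetilde v$. For $T_{\nabla\widetilde v}u$ with $q\le p$ one takes $\frac1q=\frac1p+\frac1\lambda$ and estimates
\begin{equation*}
\|S_{j'-1}\nabla\widetilde v\|_{L^{\lambda}}\lesssim\sum_{k\le j'-2}2^{(1+\frac3p)k}\|\Delta_{k}\widetilde v\|_{L^{q}}
=\sum_{k\le j'-2}2^{(3+\frac3p-\frac3q)k}\,2^{(-2+\frac3q)k}\|\Delta_{k}\widetilde v\|_{L^{q}},
\end{equation*}
and the low-frequency sum converges because $3+\frac3p-\frac3q>0$ for every $1\le q$ and $p<\infty$. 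In your version the corresponding term is $T_{\widetilde v}u$ (your ``mirror'' term; note $T_{v}\widetilde u=T_{\widetilde v}u$ anyway, since $e^{-KY(t)}$ is a scalar in $x$), and the identical computation \emph{without} the gradient yields the exponent $2+\frac3p-\frac3q$, which is $\le 0$ whenever $\frac1q-\frac1p\ge\frac23$. That region is nonempty under the lemma's hypotheses (e.g.\ $q=1$, $p=4$ satisfies $\frac1p+\frac1q>\frac13$), and there $\sum_{k\le j'-2}d_{k}2^{(2+\frac3p-\frac3q)k}$ diverges: when $-2+\frac3q>0$ a function in $\dot B^{-2+\frac3q}_{q,1}$ has uncontrollable low frequencies in $L^{\frac{pq}{p-q}}$, whereas its gradient does not. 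The repair is precisely not to pull the divergence outside for this piece: since $\nabla\cdot u=0$, $\nabla\cdot\big(S_{j'-1}\widetilde v\,\Delta_{j'}u\big)=\nabla S_{j'-1}\widetilde v\cdot\Delta_{j'}u$, which returns you to the paper's $T_{\nabla\widetilde v}u$.

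A second, smaller omission concerns the remainder: the H\"older step into $L^{q_{0}}$ with $\frac1{q_{0}}=\frac1p+\frac1q$ followed by Bernstein from $L^{q_{0}}$ to $L^{q}$ requires $q_{0}\ge1$, i.e.\ $\frac1p+\frac1q\le1$. When $\frac1p+\frac1q>1$ (also admissible) you must, as in Lemma~\ref{le4.2} and in the paper's proof of this lemma, pass through $L^{1}$ and shift one factor to the dual exponent $q'$ by Bernstein; the summand then carries $2^{-2j'}$ and converges without further conditions. This branch is routine, but it is not covered by the single exponent count you give.
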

\begin{proof}
Thanks to Bony's paraproduct decomposition, we have
\begin{equation*}
   u\cdot\nabla \widetilde{v}=T_{\widetilde{u}}\nabla v+T_{\nabla \widetilde{v}}u+R(u,\nabla \widetilde{v}).
\end{equation*}
Applying Lemma \ref{le5.1} gives us to
\begin{align*}
  \|\Delta_{j}(T_{\widetilde{u}}\nabla v)\|_{L^{q}}
  &\lesssim \sum_{|j'-j|\le4}2^{j'}\|S_{j'-1}\widetilde{u}\|_{L^{\infty}}\|\Delta_{j'}v\|_{L^{q}}\nonumber\\
  &\lesssim \sum_{|j'-j|\le4}2^{j'}\sum_{k\le j'-2}2^{\frac{3k}{p}}\|\Delta_{k}\widetilde{u}\|_{L^{p}}\|\Delta_{j'}v\|_{L^{q}}
  \nonumber\\
   &\lesssim \sum_{|j'-j|\le4}2^{2j'}\|\Delta_{j'}v\|_{L^{q}}\|\widetilde{u}\|_{\dot{B}^{-1+\frac{3}{p}}_{p,1}}\nonumber\\
   &\lesssim 2^{(2-\frac{3}{q})j}d_{j}\|v\|_{\dot{B}^{\frac{3}{q}}_{q,1}}
  \|\widetilde{u}\|_{\dot{B}^{-1+\frac{3}{p}}_{p,1}}
  \nonumber\\
  &\lesssim 2^{(2-\frac{3}{q})j}d_{j}Y'(t)\|\widetilde{u}\|_{\dot{B}^{-1+\frac{3}{p}}_{p,1}}.
\end{align*}
If $1\leq q\leq p$,  then there exists $1<\lambda\leq \infty$ such that $\frac{1}{q}=\frac{1}{p}+\frac{1}{\lambda}$, we calculate as
\begin{align*}
  \|\Delta_{j}(T_{\nabla \widetilde{v}}u)\|_{L^{q}}&\lesssim
  \sum_{|j'-j|\le4}\|\Delta_{j}S_{j'-1}\nabla \widetilde{v}\|_{L^{\lambda}}\|\Delta_{j'}u\|_{L^{p}}\nonumber\\
  &\lesssim \sum_{|j'-j|\le4}\sum_{k\le j'-2}2^{(1+\frac{3}{p})k}\|\Delta_{k} \widetilde{v}\|_{L^{q}}
  \|\Delta_{j'}u\|_{L^{p}}
  \nonumber\\
   &\lesssim \sum_{|j'-j|\le4}\sum_{k\le j'-2}2^{(3+\frac{3}{p}-\frac{3}{q})k}2^{(-2+\frac{3}{q})k}\|\Delta_{k} \widetilde{v}\|_{L^{q}}
  \|\Delta_{j'}u\|_{L^{p}}
   \nonumber\\
  &\lesssim 2^{(2-\frac{3}{q})j}d_{j}
  \|u\|_{\dot{B}^{1+\frac{3}{p}}_{p,1}}\|\widetilde{v}\|_{\dot{B}^{-2+\frac{3}{q}}_{q,1}}
  \nonumber\\
  &\lesssim 2^{(2-\frac{3}{q})j}d_{j}Y'(t)\|\widetilde{v}\|_{\dot{B}^{-2+\frac{3}{q}}_{q,1}}.
\end{align*}
If  $1\leq p<q$,  we calculate as
\begin{align*}
  \|\Delta_{j}(T_{\nabla \widetilde{v}}u)\|_{L^{q}}&\lesssim 2^{3(\frac{1}{p}-\frac{1}{q})j}
  \sum_{|j'-j|\le4}\|S_{j'-1}\nabla \widetilde{v}\Delta_{j'}u\|_{L^{p}}\nonumber\\
  &\lesssim 2^{3(\frac{1}{p}-\frac{1}{q})j}\sum_{|j'-j|\le4}\sum_{k\le j'-2}2^{(1+\frac{3}{q})k}\|\Delta_{k} \widetilde{v}\|_{L^{q}}
  \|\Delta_{j'}u\|_{L^{p}}
  \nonumber\\
   &\lesssim 2^{3(\frac{1}{p}-\frac{1}{q})j}\sum_{|j'-j|\le4}\sum_{k\le j'-2}2^{3k}2^{(-2+\frac{3}{q})k}\|\Delta_{k} \widetilde{v}\|_{L^{q}}
  \|\Delta_{j'}u\|_{L^{p}}
  \nonumber\\
  &\lesssim 2^{(2-\frac{3}{q})j} d_{j}
  \|u\|_{\dot{B}^{1+\frac{3}{p}}_{p,1}}\|\widetilde{v}\|_{\dot{B}^{-2+\frac{3}{q}}_{q,1}}
  \nonumber\\
  &\lesssim 2^{(2-\frac{3}{q})j}d_{j}Y'(t)\|\widetilde{v}\|_{\dot{B}^{-2+\frac{3}{q}}_{q,1}}.
\end{align*}
To estimate the remaining term $R(u,\nabla \widetilde{v})$, in the case that $\frac{1}{p}+\frac{1}{q}\leq
1$, the condition $\frac{1}{p}+\frac{1}{q}> \frac{1}{3}$ implies that
\begin{align*}
  \|\Delta_{j}R(u,\nabla \widetilde{v})\|_{L^{q}}
  &\lesssim 2^{(1+\frac{3}{p})j}\sum_{j'\ge j-N_{0}}\|\Delta_{j'}u\|_{L^{p}}\|\widetilde{\Delta}_{j'}\widetilde{v}\|_{L^{q}}\nonumber\\
  &\lesssim 2^{(1+\frac{3}{p})j}\sum_{j'\ge j-N_{0}}2^{(1-\frac{3}{p}-\frac{3}{q})j'}2^{(1+\frac{3}{p})j'}\|\Delta_{j'}u\|_{L^{p}}
  2^{(-2+\frac{3}{q})j'}\|\widetilde{\Delta}_{j'}\widetilde{v}\|_{L^{q}}\nonumber\\
  &\lesssim 2^{(2-\frac{3}{q})j}d_{j}\|u\|_{\dot{B}^{1+\frac{3}{p}}_{p,1}}
  \|\widetilde{v}\|_{\dot{B}^{-2+\frac{3}{q}}_{q,1}}\nonumber\\
   &\lesssim 2^{(2-\frac{3}{q})j}d_{j}Y'(t)\|\widetilde{v}\|_{\dot{B}^{-2+\frac{3}{q}}_{q,1}}.
\end{align*}
In the case that $\frac{1}{p}+\frac{1}{q}>1$, we find
$1<q'\le\infty$ such that $\frac{1}{q}+\frac{1}{q'}=1$,
\begin{align*}
  \|\Delta_{j}R(u,\nabla \widetilde{v})\|_{L^{q}}&\lesssim
  2^{j+3(1-\frac{1}{q})j}\sum_{j'\ge j-N_{0}}\|\Delta_{j'}u\widetilde{\Delta}_{j'}\widetilde{v}\|_{L^{1}}\nonumber\\
  &\lesssim 2^{(4-\frac{3}{q})j}\sum_{j'\ge j-N_{0}}\|\Delta_{j'}u\|_{L^{q'}}\|\widetilde{\Delta}_{j'}\widetilde{v}\|_{L^{q}}\nonumber\\
  &\lesssim 2^{(4-\frac{3}{q})j}\sum_{j'\ge j-N_{0}}2^{-2j'}2^{(1+\frac{3}{p})j'}\|\Delta_{j'}u\|_{L^{p}}
  2^{(-2+\frac{3}{q})j'}\|\widetilde{\Delta}_{j'}\widetilde{v}\|_{L^{q}}\nonumber\\
  &\lesssim 2^{(2-\frac{3}{q})j}d_{j}\|u\|_{\dot{B}^{1+\frac{3}{p}}_{p,1}}
  \|\widetilde{v}\|_{\dot{B}^{-2+\frac{3}{q}}_{q,1}}\nonumber\\
   &\lesssim 2^{(2-\frac{3}{q})j}d_{j}Y'(t)\|\widetilde{v}\|_{\dot{B}^{-2+\frac{3}{q}}_{q,1}}.
\end{align*}
We finish the proof of Lemma
\ref{le4.4}.
\end{proof}

\begin{lemma}\label{le4.5}
Let $1\leq q<6$.  Then we have
\begin{equation}\label{eq4.11}
   \|\Delta_{j}(\widetilde{v}\nabla(-\Delta)^{-1}w)\|_{L^{q}}
   \lesssim 2^{(1-\frac{3}{q})j}d_{j}Y'(t)\|(\widetilde{v}, \widetilde{w})\|_{\dot{B}^{-2+\frac{3}{q}}_{q,1}}.
\end{equation}
\end{lemma}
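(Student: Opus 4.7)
The plan is to mirror the paraproduct strategy of Lemmas~\ref{le4.2}--\ref{le4.4}, now applied in $L^q$ with the extra feature that $\nabla(-\Delta)^{-1}$ carries an order $-1$ (which explains the frequency power $2^{(1-3/q)j}$, one less than the $2^{(2-3/q)j}$ appearing in Lemma~\ref{le4.4}). The starting point is Bony's decomposition
\[
\widetilde v \nabla(-\Delta)^{-1}w = T_{\widetilde v}\nabla(-\Delta)^{-1}w + T_{\nabla(-\Delta)^{-1}w}\widetilde v + R\bigl(\widetilde v,\,\nabla(-\Delta)^{-1}w\bigr) =: I_1 + I_2 + I_3,
\]
and each piece will be estimated separately, with the Besov norm of $w$ used to absorb one factor of $Y'(t)$.

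For the paraproduct $I_1$, I would bound $\|S_{j'-1}\widetilde v\|_{L^\infty}$ by Bernstein in terms of $\|\widetilde v\|_{\dot{B}^{-2+3/q}_{q,1}}$, producing a geometric factor $\sum_{k\leq j'-2}2^{2k}\lesssim 2^{2j'}$, and pair it with $\|\Delta_{j'}\nabla(-\Delta)^{-1}w\|_{L^q}\lesssim 2^{-j'}\|\Delta_{j'}w\|_{L^q}$; absorbing $\|\Delta_{j'}w\|_{L^q}\lesssim d_{j'}2^{-3j'/q}\|w\|_{\dot{B}^{3/q}_{q,1}}$ and collapsing powers yields $2^{(1-3/q)j}d_j\|w\|_{\dot{B}^{3/q}_{q,1}}\|\widetilde v\|_{\dot{B}^{-2+3/q}_{q,1}}$, which is no larger than the target. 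For the remainder $I_3$, I split on $q$: when $1\leq q<2$ a direct H\"older $L^\infty\times L^q$ estimate on $\Delta_{j'}\widetilde v\,\widetilde\Delta_{j'}\nabla(-\Delta)^{-1}w$ produces $\sum_{j'\geq j-N_0}2^{(1-3/q)j'}d_{j'}^2$ which converges because $1-3/q<0$; when $q\geq 2$ I use Bernstein $\|\Delta_j f\|_{L^q}\lesssim 2^{3j/q}\|\Delta_j f\|_{L^{q/2}}$, estimate the $L^{q/2}$ norm by H\"older $L^q\times L^q$, and arrive at the sum $\sum_{j'\geq j-N_0}2^{(1-6/q)j'}d_{j'}^2$, which converges \emph{precisely} under the hypothesis $q<6$. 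This is where the threshold $q<6$ in the statement enters the argument.

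The hard part is $I_2=T_{\nabla(-\Delta)^{-1}w}\widetilde v$: the naive bound $\|S_{j'-1}\nabla(-\Delta)^{-1}w\|_{L^\infty}\lesssim \sum_{k\leq j'-2}2^{(-1+3/q)k}\|\Delta_k w\|_{L^q}$ leads, after inserting $\|\Delta_k w\|_{L^q}\lesssim d_k 2^{-3k/q}\|w\|_{\dot{B}^{3/q}_{q,1}}$, to $\sum d_k 2^{-k}$, which is divergent as $k\to-\infty$ because $\nabla(-\Delta)^{-1}$ amplifies low frequencies. To bypass this I would invoke the algebraic identity recalled after equation~\eqref{eq3.3}, applied to the frequency-localised piece $\Delta_{j'}\widetilde v$ paired with $S_{j'-1}\nabla(-\Delta)^{-1}w$, in the same spirit as the $K_1,K_2,K_3$ splitting used for $J_3$ in Lemma~\ref{le4.3}. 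This rewriting produces terms of the form $(-\Delta)\{(-\Delta)^{-1}\widetilde v\cdot \partial_m(-\Delta)^{-1}w\}$, $\nabla\cdot\{(-\Delta)^{-1}\widetilde v\cdot \partial_m\nabla(-\Delta)^{-1}w\}$ and $\partial_m\{(-\Delta)^{-1}\widetilde v\cdot w\}$, in which the explicit $(-\Delta)^{-1}$ in front of $\widetilde v$ lifts $\widetilde v$ from $\dot{B}^{-2+3/q}_{q,1}$ into the algebra $\dot{B}^{3/q}_{q,1}$, restoring a standard paraproduct estimate with the outer derivative only costing us the one derivative we are willing to pay.

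The main obstacle is verifying that the identity-based rewriting of $I_2$ indeed yields exactly the power $2^{(1-3/q)j}$ and not something slightly worse, and that the residual ``partner'' term $w\,\partial_m(-\Delta)^{-1}\widetilde v$ naturally produced by the identity is controlled by $\|\widetilde w\|_{\dot{B}^{-2+3/q}_{q,1}}$ (which is why the target norm on the right-hand side involves both $\widetilde v$ and $\widetilde w$). This partner term fits a standard Besov product estimate $\dot{B}^{3/q}_{q,1}\cdot\dot{B}^{-1+3/q}_{q,1}\hookrightarrow \dot{B}^{-1+3/q}_{q,1}$, whose validity once again requires $s_1+s_2=-1+6/q>0$, i.e.\ $q<6$, consistently with the threshold already detected in the treatment of~$I_3$.
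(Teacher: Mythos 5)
Your overall skeleton agrees with the paper's: Bony's decomposition into two paraproducts and a remainder, with the first paraproduct $T_{\widetilde{v}}\nabla(-\Delta)^{-1}w$ handled exactly as you describe ($\|S_{j'-1}\widetilde{v}\|_{L^{\infty}}\lesssim 2^{2j'}\|\widetilde{v}\|_{\dot{B}^{-2+\frac{3}{q}}_{q,1}}$ paired with $2^{-j'}\|\Delta_{j'}w\|_{L^{q}}$), and with the restriction $q<6$ entering only through the remainder when $q\geq 2$ (for $1\leq q<2$ the paper goes through H\"older into $L^{1}$ rather than your $L^{\infty}\times L^{q}$ bound, but both work). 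The gap is in your treatment of $I_{2}=T_{\nabla(-\Delta)^{-1}w}\widetilde{v}$. You correctly diagnose that placing the $\dot{B}^{\frac{3}{q}}_{q,1}$ norm of $w$ on the low-frequency factor yields the divergent sum $\sum_{k\le j'-2}2^{-k}d_{k}$, but the detour through the algebraic identity does not cure this: with $a=\Delta_{j'}\widetilde{v}$ and $b=S_{j'-1}w$, the first of the three resulting terms is $(-\Delta)\big\{\big((-\Delta)^{-1}\Delta_{j'}\widetilde{v}\big)\big(\partial_{m}(-\Delta)^{-1}S_{j'-1}w\big)\big\}$, whose low-frequency factor still carries a net order $-1$ operator on $w$, so that $\|\partial_{m}(-\Delta)^{-1}S_{j'-1}w\|_{L^{\infty}}\lesssim\sum_{k\leq j'-2}2^{-k}d_{k}\|w\|_{\dot{B}^{\frac{3}{q}}_{q,1}}$ diverges for exactly the same reason as before. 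Lifting the \emph{high}-frequency factor $\widetilde{v}$ into $\dot{B}^{\frac{3}{q}}_{q,1}$ via $(-\Delta)^{-1}$ is irrelevant to this low-frequency obstruction.

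The correct fix is much simpler and makes the identity unnecessary here (it is genuinely needed only in Lemma \ref{le4.3}, where the target space is $L^{p}$ with $p<q$). Since $e^{-KY(t)}$ is a scalar, $\widetilde{v}\nabla(-\Delta)^{-1}w=v\nabla(-\Delta)^{-1}\widetilde{w}$, so one may write $I_{2}=\sum_{j'}S_{j'-1}\big(\nabla(-\Delta)^{-1}\widetilde{w}\big)\Delta_{j'}v$ and measure the low-frequency factor through the \emph{negative} norm: $\|S_{j'-1}\nabla(-\Delta)^{-1}\widetilde{w}\|_{L^{\infty}}\lesssim\sum_{k\leq j'-2}2^{k}\,2^{(-2+\frac{3}{q})k}\|\Delta_{k}\widetilde{w}\|_{L^{q}}\lesssim 2^{j'}\|\widetilde{w}\|_{\dot{B}^{-2+\frac{3}{q}}_{q,1}}$, which converges because the Bernstein gain $2^{2k}$ from comparing $L^{\infty}$ with $\dot{B}^{-2+\frac{3}{q}}_{q,1}$ beats the low-frequency amplification $2^{-k}$ of $\nabla(-\Delta)^{-1}$. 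The high-frequency factor $\Delta_{j'}v$ then supplies the factor $Y'(t)$ through $\|v\|_{\dot{B}^{\frac{3}{q}}_{q,1}}$, giving $2^{(1-\frac{3}{q})j}d_{j}Y'(t)\|\widetilde{w}\|_{\dot{B}^{-2+\frac{3}{q}}_{q,1}}$ with no condition on $q$ beyond $q<\infty$; this swap is also the reason $\widetilde{w}$ appears on the right-hand side of \eqref{eq4.11}. This is precisely the paper's argument, and you should replace your identity-based treatment of $I_{2}$ by it.
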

\begin{proof}
Thanks to Bony's paraproduct decomposition, we obtain
\begin{equation*}
   \widetilde{v}\nabla(-\Delta)^{-1}w=T_{\widetilde{v}}\nabla(-\Delta)^{-1}w+T_{\nabla(-\Delta)^{-1}\widetilde{w}}v+R(\widetilde{v},\nabla(-\Delta)^{-1}w).
\end{equation*}
Applying Lemmas \ref{le5.1} and \ref{le5.2}  yields that
\begin{align*}
  \|\Delta_{j}(T_{\widetilde{v}}\nabla(-\Delta)^{-1}w)\|_{L^{q}}
  &\lesssim \sum_{|j'-j|\le4}\|S_{j'-1}\widetilde{v}\|_{L^{\infty}}\|\Delta_{j'}\nabla(-\Delta)^{-1}w\|_{L^{q}}\nonumber\\
  &\lesssim \sum_{|j'-j|\le4}2^{-j'}\sum_{k\leq j'-2}2^{\frac{3k}{q}}\|\Delta_{k}\widetilde{v}\|_{L^{q}}\|\Delta_{j'}w\|_{L^{q}}\nonumber\\
  &\lesssim \sum_{|j'-j|\le4}2^{(-1-\frac{3}{q})j'}\sum_{k\leq j'-2}2^{2k}2^{(-2+\frac{3}{q})k}
  \|\Delta_{k}\widetilde{v}\|_{L^{q}}2^{\frac{3j'}{q}}
  \|\Delta_{j'}w\|_{L^{q}}\nonumber\\
  &\lesssim 2^{(1-\frac{3}{q})j}d_{j}\|w\|_{\dot{B}^{\frac{3}{q}}_{q,1}}\|\widetilde{v}\|_{\dot{B}^{-2+\frac{3}{q}}_{q,1}}
  \nonumber\\
  &\lesssim 2^{(1-\frac{3}{q})j}d_{j}Y'(t)\|\widetilde{v}\|_{\dot{B}^{-2+\frac{3}{q}}_{q,1}},
\end{align*}
\begin{align*}
  \|\Delta_{j}(T_{\nabla(-\Delta)^{-1}\widetilde{w}}v)\|_{L^{q}}
  &\lesssim \sum_{|j'-j|\le4}\|S_{j'-1}\nabla(-\Delta)^{-1}\widetilde{w}\|_{L^{\infty}}\|\Delta_{j'}v\|_{L^{q}}\nonumber\\
  &\lesssim \sum_{|j'-j|\le4}\sum_{k\leq j'-2}2^{(-1+\frac{3}{q})k}\|\Delta_{k}\widetilde{w}\|_{L^{q}}\|\Delta_{j'}v\|_{L^{q}}\nonumber\\
  &\lesssim \sum_{|j'-j|\le4}2^{-\frac{3j'}{q}}\sum_{k\leq j'-2}2^{k}2^{(-2+\frac{3}{q})k}
  \|\Delta_{k}\widetilde{w}\|_{L^{q}}2^{\frac{3j'}{q}}
  \|\Delta_{j'}v\|_{L^{q}}\nonumber\\
  &\lesssim 2^{(1-\frac{3}{q})j}d_{j}\|v\|_{\dot{B}^{\frac{3}{q}}_{q,1}}\|\widetilde{w}\|_{\dot{B}^{-2+\frac{3}{q}}_{q,1}}
  \nonumber\\
  &\lesssim 2^{(1-\frac{3}{q})j}d_{j}Y'(t)\| \widetilde{w}\|_{\dot{B}^{-2+\frac{3}{q}}_{q,1}}.
\end{align*}
Finally, in the case $1\leq q<2$, there exists $2<q'\le\infty$ such that
$\frac{1}{q}+\frac{1}{q'}=1$, thus using Lemma \ref{le5.1}
yields that
\begin{align*}
  \|\Delta_{j}R(\widetilde{v},\nabla(-\Delta)^{-1}w)\|_{L^{q}}&\lesssim
  2^{3(1-\frac{1}{q})j}\sum_{j'\ge j-N_{0}}\|\Delta_{j'}\widetilde{v}\widetilde{\Delta}_{j'}\nabla(-\Delta)^{-1}w\|_{L^{1}}\nonumber\\
  &\lesssim 2^{3(1-\frac{1}{q})j}\sum_{j'\ge j-N_{0}}\|\Delta_{j'}\widetilde{v}\|_{L^{q'}}
  \|\widetilde{\Delta}_{j'}\nabla(-\Delta)^{-1}w\|_{L^{q}}\nonumber\\
  &\lesssim 2^{3(1-\frac{1}{q})j}\sum_{j'\ge j-N_{0}}2^{-2j'}2^{(-2+\frac{3}{q})j'}\|\Delta_{j'}\widetilde{v}\|_{L^{q}}
  2^{\frac{3j'}{q}}\|\widetilde{\Delta}_{j'}w\|_{L^{q}}\nonumber\\
  &\lesssim 2^{(1-\frac{3}{q})j}d_{j}\|w\|_{\dot{B}^{\frac{3}{q}}_{q,1}}\|\widetilde{v}\|_{\dot{B}^{-2+\frac{3}{q}}_{q,1}}
  \nonumber\\
  &\lesssim 2^{(1-\frac{3}{q})j}d_{j}Y'(t)\|\widetilde{v}\|_{\dot{B}^{-2+\frac{3}{q}}_{q,1}}.
\end{align*}
In the case $2\leq
q<6$, we get by using Lemma \ref{le5.1} again that
\begin{align*}
  \|\Delta_{j}R(\widetilde{v},\nabla(-\Delta)^{-1}w)\|_{L^{q}}
  &\lesssim 2^{\frac{3j}{q}}\sum_{j'\ge j-N_{0}}\|\Delta_{j'}\widetilde{v}\|_{L^{q}}
  \|\widetilde{\Delta}_{j'}\nabla(-\Delta)^{-1}w\|_{L^{q}}\nonumber\\
  &\lesssim 2^{\frac{3j}{q}}\sum_{j'\ge j-N_{0}}2^{(1-\frac{6}{q})j'}2^{(-2+\frac{3}{q})j'}\|\Delta_{j'}\widetilde{v}\|_{L^{q}}
  2^{\frac{3j'}{q}}\|\widetilde{\Delta}_{j'}w\|_{L^{q}}\nonumber\\
  &\lesssim 2^{(1-\frac{3}{q})j}d_{j}\|w\|_{\dot{B}^{\frac{3}{q}}_{q,1}}\|\widetilde{v}\|_{\dot{B}^{-2+\frac{3}{q}}_{q,1}}
 \nonumber\\
  &\lesssim 2^{(1-\frac{3}{q})j}d_{j}Y'(t) \|\widetilde{v}\|_{\dot{B}^{-2+\frac{3}{q}}_{q,1}}.
\end{align*}

We conclude that the proof of
Lemma \ref{le4.5} is complete.
\end{proof}

\medskip

 \textbf{The estimates of $u$}
 Applying the dyadic operator $\Delta_{j}$ to the first equation of \eqref{eq4.2}, then multiplying $|\Delta_{j}\widetilde{u}|^{p-2}\Delta_{j}\widetilde{u}$ and integrating over $\mathbb{R}^{3}$ (when
$p\in(1,2)$, we need to make some modification as that in
\cite{D01}), we see that
\begin{align}\label{eq4.12}
  \frac{1}{p}\frac{d}{dt}\|\Delta_{j}\widetilde{u}\|_{L^{p}}^{p}
  &-\int_{\mathbb{R}^{3}}\Delta\Delta_{j}\widetilde{u}|\Delta_{j}\widetilde{u}|^{p-2}\Delta_{j}\widetilde{u}dx=
  -\big(\Delta_{j}(u\cdot\nabla\widetilde{u})\big{|}|\Delta_{j}\widetilde{u}|^{p-2}\Delta_{j}\widetilde{u}\big)\nonumber\\
  &+\big(\Delta_{j}(\Delta\widetilde{\phi}\nabla\phi)\big{|}|\Delta_{j}\widetilde{u}|^{p-2}\Delta_{j}\widetilde{u}\big)
  -KY'(t)\|\Delta_{j}\widetilde{u}\|_{L^{p}}^{p}\nonumber\\
  &\leq \|\Delta_{j}(u\cdot\nabla\widetilde{u})\|_{L^{p}}\|\Delta_{j}\widetilde{u}\|_{L^{p}}^{p-1}
  +\|\Delta_{j}(\Delta\widetilde{\phi}\nabla\phi)\|_{L^{p}}\|\Delta_{j}\widetilde{u}\|_{L^{p}}^{p-1}-KY'(t)\|\Delta_{j}\widetilde{u}\|_{L^{p}}^{p},
\end{align}
where we have used the fact
\begin{equation*}
  \int_{\mathbb{R}^{3}}\nabla\Delta_{j}\widetilde{\Pi}|\Delta_{j}\widetilde{u}|^{p-2}\Delta_{j}\widetilde{u}dx=0,
\end{equation*}
which follows from the incompressibility condition $\nabla\cdot\widetilde{u}=0$.
Thanks to \cite{D01,P00}, there exists a positive constant $\kappa$
so that
\begin{equation*}
  -\int_{\mathbb{R}^{3}}\Delta\Delta_{j}\widetilde{u}\cdot|\Delta_{j}\widetilde{u}|^{p-2}\Delta_{j}\widetilde{u}dx\geq
  \kappa2^{2j}\|\Delta_{j}\widetilde{u}\|_{L^{p}}^{p}.
\end{equation*}
Therefore, we infer from \eqref{eq4.12} that
\begin{align*}
  \frac{d}{dt}\|\Delta_{j}\widetilde{u}\|_{L^{p}}+\kappa2^{2j}\|\Delta_{j}\widetilde{u}\|_{L^{p}}
   &\lesssim \|\Delta_{j}(u\cdot\nabla\widetilde{u})\|_{L^{p}}
  +\|\Delta_{j}(\Delta\widetilde{\phi}\nabla\phi)\|_{L^{p}}-KY'(t)\|\Delta_{j}\widetilde{u}\|_{L^{p}}.
\end{align*}
Note that by the Poisson equation, i.e., the fifth equation  of the system \eqref{eq3.1}, we  have
 $$
   \Delta\widetilde{\phi}\nabla\phi=-(\widetilde{v}-\widetilde{w})\nabla(-\Delta)^{-1}(v-w).
$$
Applying Lemmas \ref{le4.2} and \ref{le4.3} to get that
\begin{align*}
  \frac{d}{dt}\|\Delta_{j}\widetilde{u}\|_{L^{p}}+\kappa2^{2j}\|\Delta_{j}\widetilde{u}\|_{L^{p}}
   \leq C2^{(1-\frac{3}{p})j}d_{j}Y'(t)e^{-KY(t)}\mathcal{E}(t)
   -KY'(t)\|\Delta_{j}\widetilde{u}\|_{L^{p}},
\end{align*}
which leads directly to
\begin{align}\label{eq4.13}
  \frac{d}{dt}\|\widetilde{u}\|_{\dot{B}^{-1+\frac{3}{p}}_{p,1}}+\kappa\|\widetilde{u}\|_{\dot{B}^{1+\frac{3}{p}}_{p,1}}
   \leq CY'(t)e^{-KY(t)}\mathcal{E}(t)
   -KY'(t)\|\widetilde{u}\|_{\dot{B}^{-1+\frac{3}{p}}_{p,1}}.
\end{align}

\textbf{The estimates of $v$ and $w$}
We only show the desired estimates for $\widetilde{v}$ due to $\widetilde{w}$ can be done analogously as $\widetilde{v}$.
 Applying the dyadic operator $\Delta_{j}$ to the third equation of \eqref{eq4.2}, then multiplying $|\Delta_{j}\widetilde{v}|^{q-2}\Delta_{j}\widetilde{v}$ and integrating over $\mathbb{R}^{3}$ (when
$q\in(1,2)$, we need to make some modification as that in
\cite{D01}), we see that
\begin{align}\label{eq4.14}
  \frac{1}{q}\frac{d}{dt}&\|\Delta_{j}\widetilde{v}\|_{L^{q}}^{q}
  -\int_{\mathbb{R}^{3}}\Delta\Delta_{j}\widetilde{v}|\Delta_{j}\widetilde{v}|^{q-2}\Delta_{j}\widetilde{v}dx=
  -\big(\Delta_{j}(u\cdot\nabla\widetilde{v})\big{|}|\Delta_{j}\widetilde{v}|^{q-2}\Delta_{j}\widetilde{v}\big)\nonumber\\
  &-\big(\Delta_{j}\nabla\cdot(\widetilde{v}\nabla\phi)\big{|}|\Delta_{j}\widetilde{v}|^{q-2}\Delta_{j}\widetilde{v}\big)
  -KY'(t)\|\Delta_{j}\widetilde{v}\|_{L^{q}}^{q}\nonumber\\
  &\leq \|\Delta_{j}(u\cdot\nabla\widetilde{v})\|_{L^{q}}\|\Delta_{j}\widetilde{v}\|_{L^{q}}^{q-1}
  +\|\Delta_{j}\nabla\cdot(\widetilde{v}\nabla\phi)\|_{L^{q}}\|\Delta_{j}\widetilde{v}\|_{L^{q}}^{q-1}-KY'(t)\|\Delta_{j}\widetilde{v}\|_{L^{q}}^{q}.
\end{align}
Thanks to \cite{D01,P00}, there exists a positive constant $\kappa$
so that
\begin{equation*}
  -\int_{\mathbb{R}^{3}}\Delta\Delta_{j}\widetilde{v}\cdot|\Delta_{j}\widetilde{v}|^{q-2}\Delta_{j}\widetilde{v}dx\geq
  \kappa2^{2j}\|\Delta_{j}\widetilde{v}\|_{L^{q}}^{q}.
\end{equation*}
Back to \eqref{eq4.14}, we obtain that
\begin{align*}
  \frac{d}{dt}\|\Delta_{j}\widetilde{v}\|_{L^{q}}+\kappa2^{2j}\|\Delta_{j}\widetilde{v}\|_{L^{q}}
   &\lesssim \|\Delta_{j}(u\cdot\nabla\widetilde{v})\|_{L^{q}}
  +\|\Delta_{j}\nabla\cdot(\widetilde{v}\nabla\phi)\|_{L^{q}}-KY'(t)\|\Delta_{j}\widetilde{v}\|_{L^{q}}.
\end{align*}
Lemmas \ref{le4.4} and \ref{le4.5} gives us to
\begin{align*}
  \frac{d}{dt}\|\Delta_{j}\widetilde{v}\|_{L^{q}}+\kappa2^{2j}\|\Delta_{j}\widetilde{v}\|_{L^{q}}
   &\lesssim 2^{(2-\frac{3}{q})j}d_{j}Y'(t)e^{-KY(t)}\mathcal{E}(t)-KY'(t)\|\Delta_{j}\widetilde{v}\|_{L^{q}},
\end{align*}
which implies directly that
\begin{align}\label{eq4.15}
  \frac{d}{dt}\|\widetilde{v}\|_{\dot{B}^{-2+\frac{3}{q}}_{q,1}}+\kappa\|\widetilde{v}\|_{\dot{B}^{\frac{3}{q}}_{q,1}}
   &\leq CY'(t)e^{-KY(t)}\mathcal{E}(t)-KY'(t)\|\widetilde{v}\|_{\dot{B}^{-2+\frac{3}{q}}_{q,1}}.
\end{align}
Similarly, for $w$,  we have
\begin{align}\label{eq4.16}
  \frac{d}{dt}\|\widetilde{w}\|_{\dot{B}^{-2+\frac{3}{q}}_{q,1}}+\kappa\|\widetilde{w}\|_{\dot{B}^{\frac{3}{q}}_{q,1}}
   &\leq  CY'(t)e^{-KY(t)}\mathcal{E}(t)-KY'(t)\|\widetilde{w}\|_{\dot{B}^{-2+\frac{3}{q}}_{q,1}}.
\end{align}

\textbf{Proof  of Proposition \ref{pro4.1}}
It is clear that from  \eqref{eq4.13}, \eqref{eq4.15}--\eqref{eq4.16}, there exists a constant $C$ such that
\begin{align*}
  \frac{d}{dt}(e^{-KY(t)}\mathcal{E}(t))+
  \kappa\big(\|\widetilde{u}(t)\|_{\dot{B}^{1+\frac{3}{p}}_{p,1}}+\|(\widetilde{v}(t),\widetilde{w}(t))\|_{\dot{B}^{\frac{3}{q}}_{q,1}}\big)
   &\leq (C-K)Y'(t)e^{-KY(t)}\mathcal{E}(t).
\end{align*}
By choosing $K$ sufficiently large such that $K>C$, we see that
\begin{align*}
  \frac{d}{dt}(e^{-KY(t)}\mathcal{E}(t))+
  \kappa\big(\|\widetilde{u}(t)\|_{\dot{B}^{1+\frac{3}{p}}_{p,1}}+\|(\widetilde{v}(t),\widetilde{w}(t))\|_{\dot{B}^{\frac{3}{q}}_{q,1}}\big)
   &\leq 0.
\end{align*}
We finish the proof of Proposition \ref{pro4.1}.

\subsection{Higher-order derivatives estimates}

Next we derive the higher-order spatial derivatives of the solutions to the system \eqref{eq3.1}--\eqref{eq3.2}.  Let $\ell$ be a real number and $1<r<\infty$.
Define
$$
\mathcal{F}(t):= \|u(t)\|_{\dot{B}^{\ell}_{r,1}}+\|(v(t),w(t))\|_{\dot{B}^{\ell-1}_{r,1}}.
$$
We obtain the following result.

\begin{proposition}\label{pro4.6}
Under the assumptions of Proposition \ref{pro4.1}, if we further assume that
$u_{0}\in \dot{B}^{\ell}_{r,1}(\mathbb{R}^{3})$, $v_{0}, w_{0}\in \dot{B}^{\ell-1}_{r,1}(\mathbb{R}^{3})$ with $1<r<\infty$, and
$$
  \frac{3}{p}+\ell>3\max\{0, \frac{1}{p}+\frac{1}{r}-1\}\ \ \text{and}\ \ \frac{3}{q}+\ell>3\max\{0, \frac{1}{q}+\frac{1}{r}-1\},
$$
then there exist two positive constants $\kappa$ and $K$ such that for all $t\geq0$,  the unique solution $(u,v,w)$ of the system \eqref{eq3.1}--\eqref{eq3.2} satisfies
\begin{align}\label{eq4.17}
  \frac{d}{dt}(e^{-KY(t)}\mathcal{F}(t))+ \frac{\kappa}{2} e^{-KY(t)}\big(\|u(t)\|_{\dot{B}^{\ell+2}_{r,1}}+\|(v(t),w(t))\|_{\dot{B}^{\ell+1}_{r,1}}\big)\leq0.
\end{align}
\end{proposition}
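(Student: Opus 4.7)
The plan is to follow the same strategy as in Proposition \ref{pro4.1}, but at the shifted regularity level: we reprove analogues of Lemmas \ref{le4.2}--\ref{le4.5} where the ``small'' factor $Y'(t)$ is placed on the high-regularity factor (so that it will be time-integrable), while the ``critical'' factor is measured in the new norms $\dot{B}^{\ell}_{r,1}$ and $\dot{B}^{\ell-1}_{r,1}$. Introducing the modified variables $\widetilde{u}=e^{-KY(t)}u$, $\widetilde{v}=e^{-KY(t)}v$, $\widetilde{w}=e^{-KY(t)}w$, which satisfy \eqref{eq4.2}, I would apply $\Delta_{j}$ to each equation, take the $L^{r}$ inner product with $|\Delta_{j}\widetilde{u}|^{r-2}\Delta_{j}\widetilde{u}$ (and analogously for $\widetilde{v},\widetilde{w}$; the case $r\in(1,2)$ is handled exactly as in \cite{D01}), and invoke the standard lower bound $-\int \Delta\Delta_{j}\widetilde{u}\,|\Delta_{j}\widetilde{u}|^{r-2}\Delta_{j}\widetilde{u}\,dx\geq \kappa 2^{2j}\|\Delta_{j}\widetilde{u}\|_{L^{r}}^{r}$ from \cite{D01,P00}.

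The core work is four dyadic product estimates at the new regularity level, namely
\begin{align*}
\|\Delta_{j}(u\cdot\nabla\widetilde{u})\|_{L^{r}}&\lesssim 2^{-\ell j}d_{j}\,Y'(t)\,\|\widetilde{u}\|_{\dot{B}^{\ell}_{r,1}},\\
\|\Delta_{j}\bigl(\widetilde{v}\nabla(-\Delta)^{-1}w+\widetilde{w}\nabla(-\Delta)^{-1}v\bigr)\|_{L^{r}}&\lesssim 2^{-\ell j}d_{j}\,Y'(t)\,\|(\widetilde{v},\widetilde{w})\|_{\dot{B}^{\ell-1}_{r,1}},\\
\|\Delta_{j}(u\cdot\nabla\widetilde{v})\|_{L^{r}}&\lesssim 2^{-(\ell-1)j}d_{j}\,Y'(t)\bigl(\|\widetilde{u}\|_{\dot{B}^{\ell}_{r,1}}+\|\widetilde{v}\|_{\dot{B}^{\ell-1}_{r,1}}\bigr),\\
\|\Delta_{j}\bigl(\widetilde{v}\nabla(-\Delta)^{-1}w\bigr)\|_{L^{r}}&\lesssim 2^{-(\ell-1)j}d_{j}\,Y'(t)\,\|(\widetilde{v},\widetilde{w})\|_{\dot{B}^{\ell-1}_{r,1}}.
\end{align*}
Each of these I would prove by Bony's paraproduct decomposition $fg=T_{f}g+T_{g}f+R(f,g)$, always putting the factor that is integrable in time (i.e.\ $u$ in $\dot{B}^{1+3/p}_{p,1}$ or $v,w$ in $\dot{B}^{3/q}_{q,1}$, both of which together form $Y'(t)$) in the high-derivative slot and the tilde-factor in the $\dot{B}^{\ell}_{r,1}$ or $\dot{B}^{\ell-1}_{r,1}$ slot. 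The hypotheses $\frac{3}{p}+\ell>3\max\{0,\frac{1}{p}+\frac{1}{r}-1\}$ and $\frac{3}{q}+\ell>3\max\{0,\frac{1}{q}+\frac{1}{r}-1\}$ are exactly what is needed so that the series over low frequencies in the paraproduct pieces and the series over high frequencies in the remainder converge after applying H\"older's inequality with the mixed exponents $(p,q,r)$; in the borderline cases one uses the dual exponent trick already employed in the proofs of Lemmas \ref{le4.2}--\ref{le4.5}.

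The main obstacle, exactly as in Lemma \ref{le4.3}, is the Lorentz-force term $\Delta\widetilde{\phi}\nabla\phi=-(\widetilde{v}-\widetilde{w})\nabla(-\Delta)^{-1}(v-w)$ in the case $r<q$, where the remainder $R(\widetilde{v},\nabla(-\Delta)^{-1}w)$ loses too many derivatives if treated directly. I would use the same algebraic identity exploited in Lemma \ref{le4.3},
\begin{equation*}
\bigl(v\nabla(-\Delta)^{-1}w+w\nabla(-\Delta)^{-1}v\bigr)_{m}
=(-\Delta)\{\bigl((-\Delta)^{-1}v\bigr)\bigl(\partial_{m}(-\Delta)^{-1}w\bigr)\}+\cdots,
\end{equation*}
to re-express $J_{3}$ as derivatives of paraproducts of smoother pieces $(-\Delta)^{-1}\widetilde{v}$ and $(-\Delta)^{-1}w$, which shifts two derivatives onto the outside and restores scaling; the resulting sums then converge precisely under the stated constraints on $\ell$, $p$, $q$, $r$.

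Once the four dyadic estimates are in hand, multiplying by $2^{j\ell}$ (for the $u$-equation) and $2^{j(\ell-1)}$ (for the $v,w$-equations) and summing over $j\in\mathbb{Z}$ yields
\begin{equation*}
\frac{d}{dt}\mathcal{F}_{\sim}(t)+\kappa\bigl(\|\widetilde{u}\|_{\dot{B}^{\ell+2}_{r,1}}+\|(\widetilde{v},\widetilde{w})\|_{\dot{B}^{\ell+1}_{r,1}}\bigr)\leq (C-K)Y'(t)\,e^{-KY(t)}\mathcal{F}(t),
\end{equation*}
where $\mathcal{F}_{\sim}(t):=e^{-KY(t)}\mathcal{F}(t)$. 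Choosing $K>C$ (possibly larger than in Proposition \ref{pro4.1}) absorbs the right-hand side and gives \eqref{eq4.17}. Note that since $Y(t)$ is a priori bounded by the smallness of the initial data via Proposition \ref{pro4.1}, the factor $e^{-KY(t)}$ is bounded above and below by absolute constants, so \eqref{eq4.17} is equivalent to the corresponding inequality for $\mathcal{F}(t)$ up to renaming $\kappa$.
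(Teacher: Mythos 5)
Your overall architecture (energy estimate on each dyadic block, lower bound $\kappa 2^{2j}\|\Delta_j\cdot\|_{L^r}^r$ for the dissipation, Bony decomposition for the nonlinearities, absorb the remainder by taking $K>C$) matches the paper's. But there is a genuine gap in the core step: the four dyadic estimates you propose, e.g.
\begin{equation*}
\|\Delta_{j}(u\cdot\nabla\widetilde{u})\|_{L^{r}}\lesssim 2^{-\ell j}d_{j}\,Y'(t)\,\|\widetilde{u}\|_{\dot{B}^{\ell}_{r,1}},
\end{equation*}
do not hold over the whole range of $\ell$ covered by the proposition. The hypotheses $\frac{3}{p}+\ell>3\max\{0,\frac{1}{p}+\frac{1}{r}-1\}$ and $\frac{3}{q}+\ell>3\max\{0,\frac{1}{q}+\frac{1}{r}-1\}$ are only \emph{lower} bounds on $\ell$; they control the remainder and the low-frequency sums, but the proposition imposes no upper bound on $\ell$ (it is applied with $\ell$ as large as $N$). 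The obstruction sits in the paraproduct piece where the tilde-variable occupies the low-frequency slot, e.g. $T_{\widetilde{u}}\nabla u$: one needs $\|S_{j'-1}\widetilde{u}\|_{L^{\infty}}\lesssim 2^{(\frac{3}{r}-\ell)j'}\|\widetilde{u}\|_{\dot{B}^{\ell}_{r,1}}$, and the sum $\sum_{k\le j'-2}2^{(\frac{3}{r}-\ell)k}\,2^{\ell k}\|\Delta_k\widetilde{u}\|_{L^r}$ diverges (as $k\to-\infty$) once $\ell>\frac{3}{r}$; this is exactly the constraint $s_2\le\min\{\frac{3}{p_1},\frac{3}{p_2}\}$ in the product law (Lemma \ref{le5.3}), which your argument silently assumes but which is not part of the hypotheses. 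A homogeneous norm of high positive regularity simply does not see low frequencies, so for large $\ell$ you cannot avoid placing the $\dot{B}^{\ell+1}_{r,1}$ (or $\dot{B}^{\ell}_{r,1}$ for $v,w$) norm on the high-frequency factor.

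The paper handles this by splitting into two cases. For $\ell\le 0$ it argues essentially as you do, via Lemma \ref{le5.3}. For $\ell>0$ it instead uses the algebra property of $\dot{B}^{\ell}_{r,1}\cap L^{\infty}$, which produces terms such as $\|u\|_{\dot{B}^{\ell+1}_{r,1}}\|u\|_{\dot{B}^{\frac{3}{p}}_{p,1}}$; these are then interpolated, $\|u\|_{\dot{B}^{\ell+1}_{r,1}}\lesssim\|u\|_{\dot{B}^{\ell}_{r,1}}^{1/2}\|u\|_{\dot{B}^{\ell+2}_{r,1}}^{1/2}$ and $\|u\|_{\dot{B}^{\frac{3}{p}}_{p,1}}\lesssim\|u\|_{\dot{B}^{-1+\frac{3}{p}}_{p,1}}^{1/2}\|u\|_{\dot{B}^{1+\frac{3}{p}}_{p,1}}^{1/2}$, and the high-order factor is absorbed into the dissipation by Young's inequality --- which is precisely why the final constant in \eqref{eq4.17} is $\frac{\kappa}{2}$ rather than $\kappa$. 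Your proposal is missing this case distinction and the absorption step, so as written it only establishes the proposition for $\ell\le\min\{\frac{3}{p},\frac{3}{q},\frac{3}{r}\}$, not for the higher-order derivatives that are the main point of the result.
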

\begin{proof}
Applying the operator $\Delta_{j}\Lambda^{\ell}$ to the first equation of
\eqref{eq3.1},  and $\Delta_{j}\Lambda^{\ell-1}$ to the third equation of \eqref{eq3.1},
then  taking $L^{2}$ inner product with $|\Delta_{j}\Lambda^{\ell}u|^{r-2}\Delta_{j}\Lambda^{\ell}u$ to the first resultant, and  $|\Delta_{j}\Lambda^{\ell-1}v|^{r-2}\Delta_{j}\Lambda^{\ell-1}v$ to the second resultant (while for $1<r<2$,  we need to make some modification as that in
\cite{D01}),  we obtain  that
\begin{align*}
  \frac{1}{r}\frac{d}{dt}\|\Delta_{j}\Lambda^{\ell}u\|_{L^{r}}^{r}
  &-\big(\Delta\Delta_{j}\Lambda^{\ell}u\big{|}|\Delta_{j}\Lambda^{\ell}u|^{r-2}\Delta_{j}\Lambda^{\ell}u\big)=
  -\big(\Delta_{j}\Lambda^{\ell}(u\cdot\nabla u)\big{|}|\Delta_{j}\Lambda^{\ell}u|^{r-2}\Delta_{j}\Lambda^{\ell}u\big)\nonumber\\
  &+\big(\Delta_{j}\Lambda^{\ell}(\Delta\phi\nabla\phi)\big{|}|\Delta_{j}\Lambda^{\ell}u|^{r-2}\Delta_{j}\Lambda^{\ell}u\big)\nonumber\\
  &\leq \big(\|\Delta_{j}\Lambda^{\ell}(u\cdot\nabla u)\|_{L^{r}}
  +\|\Delta_{j}\Lambda^{\ell}(\Delta\phi\nabla\phi)\|_{L^{r}}\big)\|\Delta_{j}\Lambda^{\ell}u\|_{L^{r}}^{r-1},
\end{align*}
\begin{align*}
  \frac{1}{r}\frac{d}{dt}\|\Delta_{j}\Lambda^{\ell-1}v\|_{L^{r}}^{r}
  &-\big(\Delta\Delta_{j}\Lambda^{\ell-1}v\big{|}|\Delta_{j}\Lambda^{\ell-1}v|^{r-2}\Delta_{j}\Lambda^{\ell-1}v\big)\nonumber\\=&
  -\big(\Delta_{j}\Lambda^{\ell-1}(u\cdot\nabla v)\big{|}|\Delta_{j}\Lambda^{\ell-1}v|^{r-2}\Delta_{j}\Lambda^{\ell-1}v\big)\nonumber\\
  &-\big(\Delta_{j}\Lambda^{\ell-1}\nabla\cdot(v\nabla\phi)\big{|}|\Delta_{j}\Lambda^{\ell-1}v|^{r-2}\Delta_{j}\Lambda^{\ell-1}v\big)\nonumber\\
 \leq & \big(\|\Delta_{j}\Lambda^{\ell-1}(u\cdot\nabla v)\|_{L^{r}}
  +\|\Delta_{j}\Lambda^{\ell-1}\nabla\cdot(v\nabla\phi)\|_{L^{r}}\big)\|\Delta_{j}\Lambda^{\ell-1}v\|_{L^{r}}^{r-1},
\end{align*}
where we have used the fact
\begin{equation*}
  \int_{\mathbb{R}^{3}}\nabla\Delta_{j}\Lambda^{\ell}\Pi|\Delta_{j}\Lambda^{\ell}u|^{r-2}\Delta_{j}\Lambda^{\ell}udx=0,
\end{equation*}
which follows from the incompressibility condition $\nabla\cdot u=0$. Thanks again to \cite{D01,P00}, there exists a positive constant $\kappa$
so that
\begin{equation*}
  -\int_{\mathbb{R}^{3}}\Delta\Delta_{j}\Lambda^{\ell}u\cdot|\Delta_{j}\Lambda^{\ell}u|^{r-2}\Delta_{j}\Lambda^{\ell}udx\geq
  \kappa2^{2j}\|\Delta_{j}\Lambda^{\ell}u\|_{L^{r}}^{r},
\end{equation*}
\begin{equation*}
  -\int_{\mathbb{R}^{3}}\Delta\Delta_{j}\Lambda^{\ell-1}v\cdot|\Delta_{j}\Lambda^{\ell-1}v|^{r-2}\Delta_{j}\Lambda^{\ell-1}vdx\geq
  \kappa2^{2j}\|\Delta_{j}\Lambda^{\ell-1}v\|_{L^{r}}^{r}.
\end{equation*}
It follows that
\begin{align}\label{eq4.18}
  \frac{d}{dt}\|\Delta_{j}\Lambda^{\ell}u\|_{L^{r}}+\kappa2^{2j}\|\Delta_{j}\Lambda^{\ell}u\|_{L^{r}}
   \lesssim \|\Delta_{j}\Lambda^{\ell}(u\cdot\nabla u)\|_{L^{r}}
  +\|\Delta_{j}\Lambda^{\ell}(\Delta\phi\nabla\phi)\|_{L^{r}},
\end{align}
\begin{align}\label{eq4.19}
  \frac{d}{dt}\|\Delta_{j}\Lambda^{\ell-1}v\|_{L^{r}}+\kappa2^{2j}\|\Delta_{j}\Lambda^{\ell-1}v\|_{L^{r}}
   \lesssim \|\Delta_{j}\Lambda^{\ell-1}(u\cdot\nabla v)\|_{L^{r}}
  +\|\Delta_{j}\Lambda^{\ell-1}\nabla\cdot(v\nabla\phi)\|_{L^{r}}.
\end{align}
Taking $l^{1}$ norm to \eqref{eq4.18} and \eqref{eq4.19}, respectively,  and using Lemma \ref{le5.2},  we see that
\begin{align}\label{eq4.20}
  \frac{d}{dt}\|u\|_{\dot{B}^{\ell}_{r,1}}+\kappa\|u\|_{\dot{B}^{\ell+2}_{r,1}}
   &\lesssim \|u\cdot\nabla u\|_{\dot{B}^{\ell}_{r,1}}
  +\|\Delta\phi\nabla\phi\|_{\dot{B}^{\ell}_{r,1}},
\end{align}
\begin{align}\label{eq4.21}
  \frac{d}{dt}\|v\|_{\dot{B}^{\ell-1}_{r,1}}+\kappa\|v\|_{\dot{B}^{\ell+1}_{r,1}}
   &\lesssim \|u\cdot\nabla v\|_{\dot{B}^{\ell-1}_{r,1}}
  +\|\nabla\cdot(v\nabla\phi)\|_{\dot{B}^{\ell-1}_{r,1}}.
\end{align}
In order to finish the proof of Proposition \ref{pro4.6}, the case $\ell>0$ is simple because of  $\dot{B}^{\ell}_{r,1}(\mathbb{R}^{3})\cap L^{\infty}(\mathbb{R}^{3})$ is a Banach algebra, and  $\dot{B}^{\frac{3}{p}}_{p,1}(\mathbb{R}^{3})\hookrightarrow\dot{B}^{0}_{\infty,1}(\mathbb{R}^{3})\hookrightarrow L^{\infty}(\mathbb{R}^{3})$ for all $1\leq p\leq \infty$, we obtain by using Lemma \ref{le5.2} that
\begin{align*}
  \|u\cdot\nabla u\|_{\dot{B}^{\ell}_{r,1}}&\lesssim \|u\|_{\dot{B}^{\ell}_{r,1}}\|\nabla u\|_{L^{\infty}}+ \|\nabla u\|_{\dot{B}^{\ell}_{r,1}}\| u\|_{L^{\infty}}\\
  &\lesssim \|u\|_{\dot{B}^{\ell}_{r,1}}\|\nabla u\|_{\dot{B}^{\frac{3}{p}}_{p,1}}+\|u\|_{\dot{B}^{\ell+1}_{r,1}}\| u\|_{\dot{B}^{\frac{3}{p}}_{p,1}}\\
  &\lesssim\|u\|_{\dot{B}^{\ell}_{r,1}}\|u\|_{\dot{B}^{1+\frac{3}{p}}_{p,1}}+\|u\|_{\dot{B}^{\ell}_{r,1}}^{\frac{1}{2}}\|u\|_{\dot{B}^{\ell+2}_{r,1}}^{\frac{1}{2}}
  \| u\|_{\dot{B}^{-1+\frac{3}{p}}_{p,1}}^{\frac{1}{2}}\| u\|_{\dot{B}^{1+\frac{3}{p}}_{p,1}}^{\frac{1}{2}}\\
  &\leq \frac{\kappa}{2}\|u\|_{\dot{B}^{\ell+2}_{r,1}}+C\|u\|_{\dot{B}^{1+\frac{3}{p}}_{p,1}}\|u\|_{\dot{B}^{\ell}_{r,1}},
\end{align*}
where we have used the interpolation inequalities:
$$
  \| u\|_{\dot{B}^{\frac{3}{p}}_{p,1}}\lesssim\| u\|_{\dot{B}^{-1+\frac{3}{p}}_{p,1}}^{\frac{1}{2}}\| u\|_{\dot{B}^{1+\frac{3}{p}}_{p,1}}^{\frac{1}{2}}, \ \ \ \|u\|_{\dot{B}^{\ell+1}_{r,1}}\lesssim\|u\|_{\dot{B}^{\ell+2}_{r,1}}^{\frac{1}{2}}
  \|u\|_{\dot{B}^{\ell}_{r,1}}^{\frac{1}{2}}.
$$
Similarly, we have
\begin{align*}
  \|\Delta\phi\nabla\phi\|_{\dot{B}^{\ell}_{r,1}}&=\|(v-w)\nabla(-\Delta)^{-1}(w-v)\|_{\dot{B}^{\ell}_{r,1}}\\
   &\lesssim
   \|\nabla(-\Delta)^{-1}(w-v)\|_{\dot{B}^{\ell}_{r,1}} \|v-w\|_{L^{\infty}}+\|v-w\|_{\dot{B}^{\ell}_{r,1}} \|\nabla(-\Delta)^{-1}(w-v)\|_{L^{\infty}}\\
   &\lesssim
   \|(v,w)\|_{\dot{B}^{\ell-1}_{r,1}} \|(v,w)\|_{\dot{B}^{\frac{3}{q}}_{q,1}}+\|(v,w)\|_{\dot{B}^{\ell}_{r,1}} \|(v,w)\|_{\dot{B}^{-1+\frac{3}{q}}_{q,1}}\\
    &\lesssim
   \|(v,w)\|_{\dot{B}^{\ell-1}_{r,1}} \|(v,w)\|_{\dot{B}^{\frac{3}{q}}_{q,1}}+\|(v,w)\|_{\dot{B}^{\ell-1}_{r,1}}^{\frac{1}{2}}\|(v,w)\|_{\dot{B}^{\ell+1}_{r,1}}^{\frac{1}{2}} \|(v,w)\|_{\dot{B}^{-2+\frac{3}{q}}_{q,1}}^{\frac{1}{2}} \|(v,w)\|_{\dot{B}^{\frac{3}{q}}_{q,1}}^{\frac{1}{2}} \\
   &\leq \frac{\kappa}{6}\|(v,w)\|_{\dot{B}^{\ell+1}_{r,1}}+C\|(v,w)\|_{\dot{B}^{\frac{3}{q}}_{q,1}}\|(v,w)\|_{\dot{B}^{\ell-1}_{r,1}};
\end{align*}
\begin{align*}
   \|u\cdot\nabla v\|_{\dot{B}^{\ell-1}_{r,1}}&\approx\|uv\|_{\dot{B}^{\ell}_{r,1}}\lesssim
   \|u\|_{\dot{B}^{\ell}_{r,1}} \|v\|_{\dot{B}^{\frac{3}{q}}_{q,1}}+\|u\|_{\dot{B}^{\frac{3}{p}}_{p,1}} \|v\|_{\dot{B}^{\ell}_{r,1}}\\
   &\lesssim \|u\|_{\dot{B}^{\ell}_{r,1}} \|v\|_{\dot{B}^{\frac{3}{q}}_{q,1}}+\|u\|_{\dot{B}^{-1+\frac{3}{p}}_{p,1}}^{\frac{1}{2}}\|u\|_{\dot{B}^{1+\frac{3}{p}}_{p,1}}^{\frac{1}{2}} \|v\|_{\dot{B}^{\ell-1}_{r,1}}^{\frac{1}{2}}\|v\|_{\dot{B}^{\ell+1}_{r,1}}^{\frac{1}{2}}\\
   &\leq \frac{\kappa}{6}\|v\|_{\dot{B}^{\ell+1}_{r,1}}+C\|v\|_{\dot{B}^{\frac{3}{q}}_{q,1}}\|u\|_{\dot{B}^{\ell}_{r,1}} +C\|u\|_{\dot{B}^{1+\frac{3}{p}}_{p,1}}\|v\|_{\dot{B}^{\ell-1}_{r,1}};
\end{align*}
\begin{align*}
   \|\nabla\cdot(v\nabla\phi)\|_{\dot{B}^{\ell-1}_{r,1}}&\approx\|v\nabla\phi\|_{\dot{B}^{\ell}_{r,1}}
   =\|v\nabla(-\Delta)^{-1}(w-v)\|_{\dot{B}^{\ell}_{r,1}}\\
  &\lesssim\|(v,w)\|_{\dot{B}^{\ell-1}_{r,1}} \|v\|_{\dot{B}^{\frac{3}{q}}_{q,1}}+\|(v,w)\|_{\dot{B}^{-1+\frac{3}{q}}_{q,1}}\|v\|_{\dot{B}^{\ell}_{r,1}}\\
  &\lesssim\|(v,w)\|_{\dot{B}^{\ell-1}_{r,1}} \|v\|_{\dot{B}^{\frac{3}{q}}_{q,1}}+\|(v,w)\|_{\dot{B}^{-2+\frac{3}{q}}_{q,1}}^{\frac{1}{2}}
  \|(v,w)\|_{\dot{B}^{\frac{3}{q}}_{q,1}}^{\frac{1}{2}}\|v\|_{\dot{B}^{\ell-1}_{r,1}}^{\frac{1}{2}}\|v\|_{\dot{B}^{\ell+1}_{r,1}}^{\frac{1}{2}}\\
   &\lesssim \frac{\kappa}{6}\|v\|_{\dot{B}^{\ell+1}_{r,1}}+
   C\|(v,w)\|_{\dot{B}^{\frac{3}{q}}_{q,1}}\|(v,w)\|_{\dot{B}^{\ell-1}_{r,1}}.
\end{align*}
On the other hand, in the case $\ell\leq 0$,  recall that
$$
  \frac{3}{p}+\ell>3\max\{0, \frac{1}{p}+\frac{1}{r}-1\}\ \ \text{and}\ \ \frac{3}{q}+\ell>3\max\{0, \frac{1}{q}+\frac{1}{r}-1\}.
$$
Applying  Lemma \ref{le5.3} with $f=u$, $g=\nabla u$, $s_{1}=\frac{3}{p}$, $s_{2}=\ell$, $p_{1}=p$, $p_{2}=r$, we have
\begin{align*}
   \|u\cdot\nabla u\|_{\dot{B}^{\ell}_{r,1}}\lesssim
   \|u\|_{\dot{B}^{\ell}_{r,1}} \|\nabla u\|_{\dot{B}^{\frac{3}{p}}_{p,1}}\lesssim \|u\|_{\dot{B}^{1+\frac{3}{p}}_{p,1}}\|u\|_{\dot{B}^{\ell}_{r,1}};
\end{align*}
Applying Lemma \ref{le5.3} with $f=v-w$, $g=\nabla(-\Delta)^{-1}(w-v)$, $s_{1}=\frac{3}{q}$, $s_{2}=\ell$, $p_{1}=q$, $p_{2}=r$, we have
\begin{align*}
   \|\Delta\phi\nabla\phi\|_{\dot{B}^{\ell}_{r,1}}&=\|(v-w)\nabla(-\Delta)^{-1}(w-v)\|_{\dot{B}^{\ell}_{r,1}}\nonumber\\
   &\lesssim
   \|\nabla(-\Delta)^{-1}(w-v)\|_{\dot{B}^{\ell}_{r,1}} \|v-w\|_{\dot{B}^{\frac{3}{q}}_{q,1}}\lesssim \|(v,w)\|_{\dot{B}^{\frac{3}{q}}_{q,1}}\|(v,w)\|_{\dot{B}^{\ell-1}_{r,1}};
\end{align*}
Applying  Lemma \ref{le5.3} with $f=u$, $g=v$, $s_{1}=\frac{3}{q}$, $s_{2}=\ell$, $p_{1}=q$, $p_{2}=r$, we have
\begin{align*}
   \|u\cdot\nabla v\|_{\dot{B}^{\ell-1}_{r,1}}\approx\|uv\|_{\dot{B}^{\ell}_{r,1}}\lesssim
   \|v\|_{\dot{B}^{\frac{3}{q}}_{q,1}}\|u\|_{\dot{B}^{\ell}_{r,1}};
\end{align*}
Applying  Lemma \ref{le5.3} with $f=\nabla(-\Delta)^{-1}(w-v)$, $g=v$, $s_{1}=\frac{3}{q}$, $s_{2}=\ell$, $p_{1}=q$, $p_{2}=r$, we have
\begin{align*}
   \|\nabla\cdot(v\nabla\phi)\|_{\dot{B}^{\ell-1}_{r,1}}\approx\|v\nabla\phi\|_{\dot{B}^{\ell}_{r,1}}\lesssim
   \|\nabla(-\Delta)^{-1}(w-v)\|_{\dot{B}^{\ell}_{r,1}} \|v\|_{\dot{B}^{\frac{3}{q}}_{q,1}}\lesssim
   \|v\|_{\dot{B}^{\frac{3}{q}}_{q,1}}\|(v,w)\|_{\dot{B}^{\ell-1}_{r,1}}.
\end{align*}
Therefore, we conclude that
\begin{align}\label{eq4.22}
  \frac{d}{dt}\|u\|_{\dot{B}^{\ell}_{r,1}}+\frac{\kappa}{2}\|u\|_{\dot{B}^{\ell+2}_{r,1}}
   &\leq \frac{\kappa}{6}\|(v,w)\|_{\dot{B}^{\ell+1}_{r,1}}\nonumber\\
   &+C\big(\|u\|_{\dot{B}^{1+\frac{3}{p}}_{p,1}}+\|(v,w)\|_{\dot{B}^{\frac{3}{q}}_{q,1}}\big)\big(\|u\|_{\dot{B}^{\ell}_{r,1}}
  +\|(v,w)\|_{\dot{B}^{\ell-1}_{r,1}}\big);
\end{align}
\begin{align}\label{eq4.23}
  \frac{d}{dt}\|v\|_{\dot{B}^{\ell-1}_{r,1}}+\frac{2\kappa}{3}\|v\|_{\dot{B}^{\ell+1}_{r,1}}
   &\leq C\big(\|u\|_{\dot{B}^{1+\frac{3}{p}}_{p,1}}+\|(v,w)\|_{\dot{B}^{\frac{3}{q}}_{q,1}}\big)\big(\|u\|_{\dot{B}^{\ell}_{r,1}}
  +\|(v,w)\|_{\dot{B}^{\ell-1}_{r,1}}\big).
\end{align}
Similarly,
\begin{align}\label{eq4.24}
  \frac{d}{dt}\|w\|_{\dot{B}^{\ell-1}_{r,1}}+\frac{2\kappa}{3}\|w\|_{\dot{B}^{\ell+1}_{r,1}}
   &\leq C\big(\|u\|_{\dot{B}^{1+\frac{3}{p}}_{p,1}}+\|(v,w)\|_{\dot{B}^{\frac{3}{q}}_{q,1}}\big)\big(\|u\|_{\dot{B}^{\ell}_{r,1}}
  +\|(v,w)\|_{\dot{B}^{\ell-1}_{r,1}}\big).
\end{align}
By adding \eqref{eq4.22}--\eqref{eq4.24} together, we finally obtain that
\begin{align*}
  \frac{d}{dt}&\big(\|u\|_{\dot{B}^{\ell}_{r,1}}+\|(v,w)\|_{\dot{B}^{\ell-1}_{r,1}}\big)+\frac{\kappa}{2}\big(\|u\|_{\dot{B}^{\ell+2}_{r,1}}
  +\|(v,w)\|_{\dot{B}^{\ell+1}_{r,1}}\big)\nonumber\\
   &\leq C\big (\|u\|_{\dot{B}^{1+\frac{3}{p}}_{p,1}}+\|(v,w)\|_{\dot{B}^{\frac{3}{q}}_{q,1}}\big)\big(\|u\|_{\dot{B}^{\ell}_{r,1}}+\|(v,w)\|_{\dot{B}^{\ell-1}_{r,1}}\big).
\end{align*}
This yields \eqref{eq4.17} immediately.  We complete the proof of Proposition \ref{pro4.6}.
\end{proof}

\subsection{Proof of Theorem \ref{th3.1}}

Now we present the proof of Theorem \ref{th3.1}.
We first mention that Proposition \ref{pro4.6} implies \eqref{eq3.4} directly, so it suffices to prove \eqref{eq3.5}. For this purpose, for any $s>0$ such that
$$
  \frac{3}{p}-s>3\max\{0, \frac{1}{p}+\frac{1}{r}-1\}\ \ \text{and}\ \ \frac{3}{q}-s>3\max\{0, \frac{1}{q}+\frac{1}{r}-1\},
$$
by choosing $\ell=-s$ in Proposition \ref{pro4.6}, we see that for all $t\geq0$,
\begin{align}\label{eq4.25}
  \|u(t)\|_{\dot{B}^{-s}_{r,1}}+\|(v(t),w(t))\|_{\dot{B}^{-s-1}_{r,1}}
  \leq C\left(\|u_{0}\|_{\dot{B}^{-s}_{r,1}}+\|(v_{0},w_{0})\|_{\dot{B}^{-s-1}_{r,1}}\right)\leq C_{0}.
\end{align}
This particularly gives \eqref{eq3.5} with $\ell=-s$.  On the other hand, for any $\ell\in(-s,N]$,  by interpolation inequalities in Lemma \ref{le5.2}, we have for all $t\geq0$,
\begin{align*}
   \|u(t)\|_{\dot{B}^{\ell}_{r,1}}&\leq C \|u(t)\|_{\dot{B}^{-s}_{r,1}}^{\frac{2}{\ell+s+2}}\|u(t)\|_{\dot{B}^{\ell+2}_{r,1}}^{1-\frac{2}{\ell+s+2}},\nonumber\\
    \|(v(t),w(t))\|_{\dot{B}^{\ell-1}_{r,1}}&\leq C \|(v(t),w(t))\|_{\dot{B}^{-s-1}_{r,1}}^{\frac{2}{\ell+s+2}}\|(v(t),w(t))\|_{\dot{B}^{\ell+1}_{r,1}}^{1-\frac{2}{\ell+s+2}}.
\end{align*}
This together with \eqref{eq4.25}  implies that
\begin{align*}
   \|u(t)\|_{\dot{B}^{\ell+2}_{r,1}}&\geq C\|u(t)\|_{\dot{B}^{\ell}_{r,1}}^{1+\frac{2}{\ell+s}} \|u(t)\|_{\dot{B}^{-s}_{r,1}}^{-\frac{2}{\ell+s}}\geq C\|u(t)\|_{\dot{B}^{\ell}_{r,1}}^{1+\frac{2}{\ell+s}} ,\nonumber\\
    \|(v(t),w(t))\|_{\dot{B}^{\ell+1}_{r,1}}&\geq C\|(v(t),w(t))\|_{\dot{B}^{\ell-1}_{r,1}}^{1+\frac{2}{\ell+s}} \|(v(t),w(t))\|_{\dot{B}^{-s-1}_{r,1}}^{-\frac{2}{\ell+s}}\geq C\|(v(t),w(t))\|_{\dot{B}^{\ell-1}_{r,1}}^{1+\frac{2}{\ell+s}}.
\end{align*}
It follows that
\begin{align}\label{eq4.26}
   \|u(t)\|_{\dot{B}^{\ell+2}_{r,1}}+\|(v(t),w(t))\|_{\dot{B}^{\ell+1}_{r,1}}&\geq C(\|u(t)\|_{\dot{B}^{\ell}_{r,1}}+\|(v(t),w(t))\|_{\dot{B}^{\ell-1}_{r,1}})^{1+\frac{2}{\ell+s}}\nonumber\\
   &=C\mathcal{F}(t)^{1+\frac{2}{\ell+s}}.
\end{align}
Plugging \eqref{eq4.26}  into  \eqref{eq4.17}, we see that
\begin{equation*}
   \frac{d}{dt}(e^{-KY(t)}\mathcal{F}(t))+ C e^{-KY(t)}\mathcal{F}(t)^{1+\frac{2}{\ell+s}}\leq 0,
\end{equation*}
which combining the fact that the function $Y(t)$ is positive along time evolution yields that
\begin{equation}\label{eq4.27}
   \frac{d}{dt}(e^{-KY(t)}\mathcal{F}(t))+C (e^{-KY(t)}\mathcal{F}(t))^{1+\frac{2}{\ell+s}}\leq 0.
\end{equation}
Solving this differential inequality directly, we obtain
\begin{equation*}
   \mathcal{F}(t)\leq e^{KY(t)}\left(\mathcal{F}(0)^{-\frac{2}{\ell+s}}+\frac{2C t}{\ell+s}\right)^{-\frac{\ell+s}{2}}.
\end{equation*}
Note that  the function $Y(t)$ is bounded by the initial data in Proposition \ref{pro4.1}. Hence, we see that  for all $t\geq0$, there exists a constant $C_{0}$ such that
\begin{equation}\label{eq4.28}
   \|u(t)\|_{\dot{B}^{\ell}_{r,1}}+\|(v(t),w(t))\|_{\dot{B}^{\ell-1}_{r,1}}\leq C_{0}\left(1+t\right)^{-\frac{\ell+s}{2}}.
\end{equation}
We complete the proof of Theorem \ref{th3.1}, as desired.

\subsection{Proof of Theorem \ref{th3.2}}

Since $1<r\leq\min\{p, q\}$,  we infer from the imbedding results in Lemma \ref{le5.2} that
$$
     \dot{B}^{-s}_{r,1}(\mathbb{R}^{3})\hookrightarrow\dot{B}^{-s-3(\frac{1}{r}-\frac{1}{p})}_{p,1}(\mathbb{R}^{3}) \ \ \ \text{and}\ \ \
       \dot{B}^{-s-1}_{r,1}(\mathbb{R}^{3})\hookrightarrow\dot{B}^{-s-1-3(\frac{1}{r}-\frac{1}{q})}_{q,1}(\mathbb{R}^{3}),
$$
which together with \eqref{eq4.25} leads to for all $t\ge0$,
\begin{align}\label{eq4.29}
  \|u(t)\|_{\dot{B}^{-s-3(\frac{1}{r}-\frac{1}{p})}_{r,1}}+\|(v(t),w(t))\|_{\dot{B}^{-s-1-3(\frac{1}{r}-\frac{1}{q})}_{r,1}}
  \leq C_{0}.
\end{align}
 On the other hand, for any $s\ge\max\{0, 2-\frac{3}{r}\}$,  by interpolation inequalities in Lemma \ref{le5.2}, we have for all $t\geq0$,
\begin{align*}
    \|u(t)\|_{\dot{B}^{-1+\frac{3}{p}}_{p,1}}&\leq C \|u(t)\|_{\dot{B}^{-s-3(\frac{1}{r}-\frac{1}{p})}_{p,1}}^{\frac{2}{s+\frac{3}{r}+1}}\|u(t)\|_{\dot{B}^{1+\frac{3}{p}}_{p,1}}^{1-\frac{2}{s+\frac{3}{r}+1}},\nonumber\\
    \|(v(t),w(t))\|_{\dot{B}^{-2+\frac{3}{q}}_{q,1}}&\leq C \|(v(t),w(t))\|_{\dot{B}^{-s-1-3(\frac{1}{r}-\frac{1}{q})}_{q,1}}^{\frac{2}{s+\frac{3}{r}+1}}\|(v(t),w(t))\|_{\dot{B}^{\frac{3}{q}}_{q,1}}^{1-\frac{2}{s+\frac{3}{r}+1}}.
\end{align*}
This together with \eqref{eq4.29}  implies that
\begin{align*}
   \|u(t)\|_{\dot{B}^{1+\frac{3}{p}}_{p,1}}&\geq C\|u(t)\|_{\dot{B}^{-s-3(\frac{1}{r}-\frac{1}{p})}_{p,1}}^{\frac{2}{s+\frac{3}{r}-1}} \|u(t)\|_{\dot{B}^{-1+\frac{3}{p}}_{p,1}}^{1+\frac{2}{s+\frac{3}{r}-1}}
   \geq C\|u(t)\|_{\dot{B}^{-1+\frac{3}{p}}_{p,1}}^{1+\frac{2}{s+\frac{3}{r}-1}} ,\nonumber\\
    \|(v(t),w(t))\|_{\dot{B}^{\frac{3}{q}}_{q,1}}&\geq C\|(v(t),w(t))\|_{\dot{B}^{-s-1-3(\frac{1}{r}-\frac{1}{q})}_{q,1}}^{\frac{2}{s+\frac{3}{r}-1}}  \|(v(t),w(t))\|_{\dot{B}^{-2+\frac{3}{q}}_{q,1}}^{1+\frac{2}{s+\frac{3}{r}-1}} \geq C\|(v(t),w(t))\|_{\dot{B}^{-2+\frac{3}{q}}_{r,1}}^{1+\frac{2}{s+\frac{3}{r}-1}}.
\end{align*}
It follows that
\begin{align}\label{eq4.30}
   \|u(t)\|_{\dot{B}^{1+\frac{3}{p}}_{p,1}}+\|(v(t),w(t))\|_{\dot{B}^{\frac{3}{q}}_{q,1}}&\geq C(\|u(t)\|_{\dot{B}^{-1+\frac{3}{p}}_{p,1}}+\|(v(t),w(t))\|_{\dot{B}^{-2+\frac{3}{q}}_{q,1}})^{1+\frac{2}{s+\frac{3}{r}-1}}\nonumber\\
   &=C\mathcal{E}(t)^{1+\frac{2}{s+\frac{3}{r}-1}}.
\end{align}
Plugging \eqref{eq4.30}  into  \eqref{eq4.1}, by using the function $Y(t)$ is positive along time evolution, we obtain
\begin{equation}\label{eq4.31}
   \frac{d}{dt}(e^{-KY(t)}\mathcal{E}(t))+C (e^{-KY(t)}\mathcal{E}(t))^{1+\frac{2}{s+\frac{3}{r}-1}}\leq 0.
\end{equation}
Solving this differential inequality directly, we obtain
\begin{equation*}
   \mathcal{E}(t)\leq e^{KY(t)}\left(\mathcal{E}(0)^{-\frac{2}{s+\frac{3}{r}-1}}+\frac{2C t}{s+\frac{3}{r}-1}\right)^{-\frac{s+\frac{3}{r}-1}{2}}.
\end{equation*}
 Since $Y(t)$ is bounded by the initial data in Proposition \ref{pro4.1},  there exists a constant $C_{0}$ such that for all $t\geq0$,
\begin{equation}\label{eq4.32}
   \|u(t)\|_{\dot{B}^{-1+\frac{3}{p}}_{p,1}}+\|(v(t),w(t))\|_{\dot{B}^{-2+\frac{3}{q}}_{q,1}}\leq C_{0}\left(1+t\right)^{-\frac{s+\frac{3}{r}-1}{2}}.
\end{equation}
Notice that \eqref{eq4.32} gives in particular \eqref{eq3.6}  with $\ell=-1+\frac{3}{p}$, and
\eqref{eq3.7} with $\ell-1=-2+\frac{3}{q}$, respectively.
Finally,  for any  $\ell\in[-s-3(\frac{1}{r}-\frac{1}{p}), -1+\frac{3}{p})$, by using interpolation inequality in Lemma \ref{le5.2}, we see that
\begin{equation*}
     \|u(t)\|_{\dot{B}^{\ell}_{r,1}}\leq C\|u(t)\|_{\dot{B}^{-s-3(\frac{1}{r}-\frac{1}{p})}_{p,1}}^{\frac{\frac{3}{p}-1-\ell}{s+\frac{3}{r}-1}}
     \|u(t)\|_{\dot{B}^{-1+\frac{3}{p}}_{p,1}}^{\frac{\ell+s+3(\frac{1}{r}-\frac{1}{p})}{s+\frac{3}{r}-1}},
\end{equation*}
which combining \eqref{eq4.29} and \eqref{eq4.32} implies that
\begin{equation*}
     \|u(t)\|_{\dot{B}^{\ell}_{r,1}}\leq C_{0}(1+t)^{-(\frac{\ell+s}{2})-\frac{3}{2}(\frac{1}{r}-\frac{1}{p})}.
\end{equation*}
Similarly, for any  $\ell\in[-s-1-3(\frac{1}{r}-\frac{1}{q}), -2+\frac{3}{q})$,  there exists a constant $C_{0}$ such that for all $t\geq 0$,
\begin{equation*}
     \|(v(t), w(t))\|_{\dot{B}^{\ell-1}_{r,1}}\leq C\|(v(t),w(t))\|_{\dot{B}^{-s-1-3(\frac{1}{r}-\frac{1}{q})}_{q,1}}^{\frac{\frac{3}{q}-1-\ell}{s+\frac{3}{r}-1}}
     \|(v(t),w(t))\|_{\dot{B}^{-2+\frac{3}{q}}_{q,1}}^{\frac{\ell+s+3(\frac{1}{r}-\frac{1}{q})}{s+\frac{3}{r}-1}},
\end{equation*}
which combining \eqref{eq4.29} and \eqref{eq4.32} again leads to
\begin{equation*}
     \|(v(t), w(t))\|_{\dot{B}^{\ell-1}_{r,1}}\leq C_{0}(1+t)^{-(\frac{\ell+s}{2})-\frac{3}{2}(\frac{1}{r}-\frac{1}{q})}.
\end{equation*}
We complete the proof of Theorem \ref{th3.2}, as desired.
\section{Appendix}
 We first recall some crucial analytic tools used in the proofs of Theorems \ref{th3.1} and \ref{th3.2}, then give a sketched proof for global existence part in Theorem \ref{th3.1}.

\subsection{Useful lemmas}

\begin{lemma}\label{le5.1} {\em (\cite{BCD11}, \cite{D05})}
Let $\mathcal{B}$ be a ball, and $\mathcal{C}$  a ring in
$\mathbb{R}^{3}$. There exists a constant $C$ such that for any
positive real number $\lambda$, any nonnegative integer $k$ and any
couple of real numbers $(a,b)$ with $1\leq a\le b\leq \infty$, we
have
\begin{equation}\label{eq5.1}
   \operatorname{supp}\hat{f}\subset\lambda\mathcal{B}\ \ \Rightarrow\ \   \sup_{|\alpha|=k}\|\partial^{\alpha}f\|_{L^{b}}\leq
   C^{k+1}\lambda^{k+3(\frac{1}{a}-\frac{1}{b})}\|f\|_{L^{a}},
\end{equation}
\begin{equation}\label{eq5.2}
   \operatorname{supp}\hat{f}\subset\lambda\mathcal{C} \ \ \Rightarrow\ \   C^{-1-k}\lambda^{k}\|f\|_{L^{a}}\leq
   \sup_{|\alpha|=k}\|\partial^{\alpha}f\|_{L^{a}}\leq  C^{1+k}\lambda^{k}\|f\|_{L^{a}}.
\end{equation}
\end{lemma}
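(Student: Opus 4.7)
\textbf{Proof proposal for Lemma \ref{le5.1}.} The plan is to deduce both inequalities from a single kernel convolution identity, reducing to the case $\lambda=1$ by a standard scaling argument, and then applying Young's convolution inequality. Since $f$ has spectrum in $\lambda\mathcal{B}$ (resp.\ $\lambda\mathcal{C}$), the map $g(x):=f(x/\lambda)$ has spectrum in $\mathcal{B}$ (resp.\ $\mathcal{C}$), and the $L^a$ and $L^b$ norms of $f$ and its derivatives scale by explicit powers of $\lambda$; carrying out this change of variables at the end produces the factor $\lambda^{k+3(1/a-1/b)}$ in \eqref{eq5.1} and the factor $\lambda^k$ in \eqref{eq5.2}. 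So the core task is to prove both estimates for $\lambda=1$.

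For the ball case, fix a function $\chi\in\mathcal{S}(\mathbb{R}^3)$ with $\chi\equiv 1$ on a neighborhood of $\mathcal{B}$. Then $\hat f=\chi\hat f$, so $f=\check\chi\ast f$ and $\partial^\alpha f=(\partial^\alpha\check\chi)\ast f$. Young's inequality with $1+\tfrac1b=\tfrac1c+\tfrac1a$ gives
\[
\|\partial^\alpha f\|_{L^b}\le \|\partial^\alpha\check\chi\|_{L^c}\|f\|_{L^a},
\]
and a Schwartz-class estimate on $\partial^\alpha\check\chi$ controls $\|\partial^\alpha\check\chi\|_{L^c}$ by $C^{k+1}$ uniformly in $|\alpha|=k$ (the $C^{k+1}$ factor comes from tracking how many derivatives land on a Schwartz function; a straightforward induction on $k$ suffices). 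The upper bound in \eqref{eq5.2} is identical, with $\chi$ replaced by a cutoff $\widetilde\chi\in\mathcal{S}(\mathbb{R}^3)$ equal to $1$ on $\mathcal{C}$ and supported in a slightly larger annulus, and with $a=b$ (so $c=1$).

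The only genuinely new ingredient is the lower bound in \eqref{eq5.2}, which amounts to inverting $\partial^\alpha$ on spectrally localized functions. Here I would use the multinomial identity $|\xi|^{2k}=\sum_{|\alpha|=k}\binom{k}{\alpha}\xi^{2\alpha}$ to write, on $\operatorname{supp}\widetilde\chi$,
\[
\widetilde\chi(\xi)=\sum_{|\alpha|=k}\binom{k}{\alpha}\,\xi^\alpha\,\frac{\widetilde\chi(\xi)\xi^\alpha}{|\xi|^{2k}},
\]
so that setting $m_\alpha(\xi):=\binom{k}{\alpha}\widetilde\chi(\xi)\xi^\alpha/|\xi|^{2k}$ (which is smooth and compactly supported because $|\xi|$ is bounded below on $\operatorname{supp}\widetilde\chi$) one obtains
\[
f=\sum_{|\alpha|=k}\check m_\alpha\ast \partial^\alpha f\cdot i^{-k},
\]
up to harmless constants. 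Applying Young with $c=1$ and bounding $\|\check m_\alpha\|_{L^1}\lesssim C^{k+1}$ yields $\|f\|_{L^a}\le C^{k+1}\sup_{|\alpha|=k}\|\partial^\alpha f\|_{L^a}$, which is the desired lower bound.

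The main obstacle will be tracking the constants $C^{k+1}$ uniformly in $k$. For the ball case this follows from $|\partial^\alpha\check\chi(x)|\lesssim C^{|\alpha|+1}(1+|x|)^{-M}$ for any fixed large $M$. For the lower bound in the ring case, one must control $\|\check m_\alpha\|_{L^1}$, which requires checking that the symbols $m_\alpha$, which are products of a fixed smooth compactly supported cutoff with the rational function $\xi^\alpha/|\xi|^{2k}$, still have Fourier transforms whose $L^1$ norms grow at most geometrically in $k$; this can be verified by an induction exploiting the bounded-below modulus of $\xi$ on $\operatorname{supp}\widetilde\chi$ and the Leibniz rule. Once this is done, combining with the scaling reduction completes the proof.
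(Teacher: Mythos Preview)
The paper does not supply its own proof of Lemma~\ref{le5.1}; it is stated as a classical result and cited from \cite{BCD11} and \cite{D05}. Your proposal reproduces precisely the standard argument given in those references---scaling reduction to $\lambda=1$, convolution with a smooth frequency cutoff and Young's inequality for the direct estimates, and a Fourier-multiplier identity on the annulus (based on $|\xi|^{2k}=\sum_{|\alpha|=k}\binom{k}{\alpha}\xi^{2\alpha}$) for the reverse inequality---so there is nothing to compare: your approach is the textbook one and is correct.
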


Let us now state some basic properties of Besov spaces (see \cite{BCD11}, \cite{D05}).

\begin{lemma}\label{le5.2}  {\em (\cite{BCD11}, \cite{D05})}
The following properties hold:
\begin{itemize}
\item [i)] Density: The set $C_{0}^{\infty}(\mathbb{R}^{3})$ is dense in $\dot{B}^{s}_{p,r}(\mathbb{R}^{3})$ if $|s|<\frac{3}{p}$ and $1\leq p,r<\infty$ or $s=\frac{3}{p}$ and $r=1$.

\item [ii)] Derivatives: There exists a universal constant $C$ such that
\begin{equation*}
    C^{-1}\|u\|_{\dot{B}^{s}_{p,r}}\leq \|\nabla u\|_{\dot{B}^{s-1}_{p,r}}\leq C\|u\|_{\dot{B}^{s}_{p,r}}.
\end{equation*}

\item [iii)] Fractional derivative: Let $\Lambda=\sqrt{-\Delta}$ and $\sigma\in\mathbb{R}$. Then the operator $\Lambda^{\sigma}$ is an isomorphism from $\dot{B}^{s}_{p,r}(\mathbb{R}^{3})$
to $\dot{B}^{s-\sigma}_{p,r}(\mathbb{R}^{3})$.

\item [iv)] Algebraic properties: For $s>0$, $\dot{B}^{s}_{p,r}(\mathbb{R}^{3})\cap L^{\infty}(\mathbb{R}^{3})$ is an algebra.  Moreover,  $\dot{B}^{\frac{3}{p}}_{p,1}(\mathbb{R}^{3})\hookrightarrow \dot{B}^{0}_{\infty,1}(\mathbb{R}^{3})\hookrightarrow L^{\infty}(\mathbb{R}^{3})$, and for any $f,g\in\dot{B}^{s}_{p,r}(\mathbb{R}^{3})\cap L^{\infty}(\mathbb{R}^{3})$, we have
$$
  \|fg\|_{\dot{B}^{s}_{p,r}}\leq  \|f\|_{\dot{B}^{s}_{p,r}}\|g\|_{L^{\infty}}+ \|g\|_{\dot{B}^{s}_{p,r}}\|f\|_{L^{\infty}}.
$$

\item [v)] Imbedding: For $1\leq p_{1}\leq p_{2}\leq \infty$ and $1\leq r_{1}\leq r_{2}\leq \infty$, we have the continuous imbedding  $\dot{B}^{s}_{p_{1},r_{1}}(\mathbb{R}^{3})\hookrightarrow \dot{B}^{s-3(\frac{1}{p_{1}}-\frac{1}{p_{2}})}_{p_{2},r_{2}}(\mathbb{R}^{3})$.

\item [vi)] Interpolation:  For  $s_{1},s_{2}\in \mathbb{R}$ such that $s_{1}<s_{2}$ and $\theta\in (0,1)$,  there exists a constant $C$
such that
\begin{align*}
    \|u\|_{\dot{B}^{s_{1}\theta+s_{2}(1-\theta)}_{p,r}}\leq C\|u\|_{\dot{B}^{s_{1}}_{p,r}}^{\theta}\|u\|_{\dot{B}^{s_{2}}_{p,r}}^{1-\theta}.
\end{align*}
\end{itemize}
\end{lemma}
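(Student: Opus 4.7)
The plan is to handle the six items in order, each by a now-standard dyadic-block argument building on Bernstein's inequalities (Lemma \ref{le5.1}) and the definition of $\|\cdot\|_{\dot B^{s}_{p,r}}$ in terms of the $\Delta_j$. For (i) density, I would start from $f\in\dot B^s_{p,r}$ and truncate the Littlewood–Paley sum to $S_Nf-S_{-N}f=\sum_{|j|\le N}\Delta_jf$; since the norm is defined by an $\ell^r$ of $2^{js}\|\Delta_jf\|_{L^p}$, the tails go to zero when $1\le p,r<\infty$, and the range $|s|<3/p$ (or $s=3/p$, $r=1$) is exactly what guarantees convergence in $\mathcal S'/\mathcal P$ together with the fact that such truncations live in a subspace of Schwartz functions; a further convolution with a mollifier then gives $C_0^\infty$ approximants.

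For (ii) derivatives and (iii) the lift $\Lambda^\sigma$, the key is that $\Delta_j\nabla u$ and $\Delta_j\Lambda^\sigma u$ have Fourier support in a fixed dilated annulus $2^j\mathcal C$. Bernstein's inequality \eqref{eq5.2} then yields $\|\Delta_j\nabla u\|_{L^p}\approx 2^j\|\Delta_ju\|_{L^p}$ and $\|\Lambda^\sigma\Delta_ju\|_{L^p}\approx 2^{j\sigma}\|\Delta_ju\|_{L^p}$, after which multiplying by $2^{j(s-1)}$ (resp.\ $2^{j(s-\sigma)}$) and taking $\ell^r$ gives the claimed norm equivalences. For (v) embedding I would again use Bernstein, now in the form \eqref{eq5.1}: since $\Delta_ju$ has Fourier support in a ball of radius $2^j$,
\begin{equation*}
\|\Delta_ju\|_{L^{p_2}}\lesssim 2^{3j(\tfrac{1}{p_1}-\tfrac{1}{p_2})}\|\Delta_ju\|_{L^{p_1}},
\end{equation*}
so that $2^{j(s-3(1/p_1-1/p_2))}\|\Delta_ju\|_{L^{p_2}}\lesssim 2^{js}\|\Delta_ju\|_{L^{p_1}}$, and the inclusion $\ell^{r_1}\hookrightarrow\ell^{r_2}$ closes the estimate. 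For (vi) interpolation, I would split the dyadic sum at an index $J$ to be chosen, bound the low frequencies by $\|u\|_{\dot B^{s_1}_{p,r}}$ and the high frequencies by $\|u\|_{\dot B^{s_2}_{p,r}}$, and optimize in $J$; equivalently one applies Hölder in $\ell^r$ to the factorisation $2^{j(\theta s_1+(1-\theta)s_2)}\|\Delta_ju\|_{L^p}=(2^{js_1}\|\Delta_ju\|_{L^p})^\theta(2^{js_2}\|\Delta_ju\|_{L^p})^{1-\theta}$.

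The main work lies in (iv). To get $\dot B^{3/p}_{p,1}\hookrightarrow \dot B^0_{\infty,1}\hookrightarrow L^\infty$, I would combine Bernstein $\|\Delta_ju\|_{L^\infty}\lesssim 2^{3j/p}\|\Delta_ju\|_{L^p}$ with the $\ell^1$ summability that is built into the index $r=1$, which makes the Littlewood–Paley series converge absolutely in $L^\infty$. For the algebra property at $s>0$, I would invoke Bony's decomposition $fg=T_fg+T_gf+R(f,g)$ and estimate the three pieces dyadically. The paraproducts $T_fg$ and $T_gf$ are handled by Lemma \ref{le5.1} together with almost-orthogonality \eqref{eq2.2}: for example $\|\Delta_j(T_fg)\|_{L^p}\lesssim\sum_{|j-j'|\le4}\|S_{j'-1}f\|_{L^\infty}\|\Delta_{j'}g\|_{L^p}$, yielding $\|T_fg\|_{\dot B^s_{p,r}}\lesssim\|f\|_{L^\infty}\|g\|_{\dot B^s_{p,r}}$ after weighting by $2^{js}$ and taking $\ell^r$. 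The remainder $R(f,g)=\sum_{j'}\Delta_{j'}f\widetilde\Delta_{j'}g$ has its spectrum in a ball of radius $\sim 2^{j'}$, so one uses \eqref{eq5.1} to pull out a factor $2^{3j/p}$; the condition $s>0$ is exactly what makes the resulting geometric series in $j'-j$ summable after weighting by $2^{js}$. Combining the three bounds, and symmetrising in $f,g$, yields $\|fg\|_{\dot B^s_{p,r}}\lesssim\|f\|_{L^\infty}\|g\|_{\dot B^s_{p,r}}+\|g\|_{L^\infty}\|f\|_{\dot B^s_{p,r}}$.

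The only step I would expect to be technically delicate is the remainder estimate in (iv) under $s>0$, because one must handle the case where $p$ is not $\infty$ and the $L^\infty$-norm is used on the product of two $L^p$ functions via Bernstein; the sign of $s$ is essential for the geometric series $\sum_{j'\ge j-N_0}2^{-(s)(j'-j)}$ to converge. Everything else is a direct combination of Bernstein, almost orthogonality, and summation in $\ell^r$, so the argument is essentially bookkeeping once (iv) is in place.
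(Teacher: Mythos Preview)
Your sketch is correct and follows the standard dyadic arguments, but you should know that the paper itself does not prove Lemma~\ref{le5.2}: it is stated with a citation to \cite{BCD11} and \cite{D05} and used as a black box. So there is no ``paper's own proof'' to compare against; your outline is essentially the argument one finds in those references, and every step you describe (Bernstein on annuli for (ii), (iii), (v); H\"older in $\ell^r$ for (vi); Bony decomposition with the $s>0$ geometric series for the remainder in (iv)) is the canonical one.
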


\begin{lemma}\label{le5.3}  Let $1\leq p_{1}, p_{2}\leq\infty$, and $s_{1}\le \frac{3}{p_1}$,
$s_2\leq \min\{\frac{3}{p_{1}}, \frac{3}{p_2}\}$ with
$s_1+s_2>3\max(0,\frac{1}{p_1}+\frac{1}{p_2}-1)$. Assume that $f\in
\dot{B}^{s_1}_{p_1,1}(\mathbb{R}^{3})$,
$g\in\dot{B}^{s_2}_{p_2,1}(\mathbb{R}^{3})$. Then
$fg\in\dot{B}^{s_1+s_2-\frac{3}{p_1}}_{p_2,1}(\mathbb{R}^{3})$, and
there exists a positive constant $C$ such that
\begin{equation}\label{eq5.3}
   \|fg\|_{\dot{B}^{s_1+s_2-\frac{3}{p_1}}_{p_2,1}}\leq C\|f\|_{\dot{B}^{s_1}_{p_1,1}}\|g\|_{\dot{B}^{s_2}_{p_2,1}}.
\end{equation}
\end{lemma}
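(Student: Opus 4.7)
\textbf{Proof proposal for Lemma \ref{le5.3}.} The natural approach is Bony's paraproduct decomposition $fg=T_f g+T_g f+R(f,g)$, estimating each term in $\dot{B}^{s_1+s_2-\frac{3}{p_1}}_{p_2,1}$ separately via Lemma \ref{le5.1} (Bernstein) and summing using the condition $\|\{d_j\}\|_{\ell^1}\lesssim 1$ arising from the $\dot{B}^{s_i}_{p_i,1}$ norms. For a dyadic block $\Delta_j(fg)$ I will rely throughout on the almost-orthogonality property \eqref{eq2.2} to restrict the sums to $|j-j'|\le 4$ in the paraproducts and $j'\ge j-N_0$ in the remainder.

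For $T_f g=\sum_{j'}S_{j'-1}f\,\Delta_{j'}g$, I would use H\"older with the $L^\infty$--$L^{p_2}$ pairing. The Bernstein inequality \eqref{eq5.1} and the condition $s_1\le 3/p_1$ give
$$
\|S_{j'-1}f\|_{L^\infty}\lesssim \sum_{k\le j'-2}2^{3k/p_1}\|\Delta_k f\|_{L^{p_1}}\lesssim 2^{j'(\frac{3}{p_1}-s_1)}\|f\|_{\dot{B}^{s_1}_{p_1,1}},
$$
(with the sum absorbed into a $d_{j'}$ if $s_1=3/p_1$), so that $2^{j(s_1+s_2-3/p_1)}\|\Delta_j T_f g\|_{L^{p_2}}\lesssim d_j\|f\|_{\dot{B}^{s_1}_{p_1,1}}\|g\|_{\dot{B}^{s_2}_{p_2,1}}$. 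For $T_g f$ the argument is analogous but must distinguish whether $p_2\le p_1$ or $p_2>p_1$: in the first case use H\"older with exponent $1/a=1/p_2-1/p_1$ and Bernstein to bound $\|S_{j'-1}g\|_{L^a}\lesssim 2^{j'(3/p_1-s_2)}\|g\|_{\dot{B}^{s_2}_{p_2,1}}$, using $s_2\le 3/p_1$; in the second case first upgrade $\|\Delta_{j'}f\|_{L^{p_1}}$ to $\|\Delta_{j'}f\|_{L^{p_2}}$ via Bernstein (gaining $2^{3j'(1/p_1-1/p_2)}$), and bound $\|S_{j'-1}g\|_{L^\infty}\lesssim 2^{j'(3/p_2-s_2)}\|g\|_{\dot{B}^{s_2}_{p_2,1}}$ using $s_2\le 3/p_2$. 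Either way the exponents combine to $2^{j'(3/p_1-s_2)}$, yielding the same final bound.

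For the remainder $R(f,g)=\sum_{j'}\Delta_{j'}f\,\widetilde\Delta_{j'}g$ I split into two cases based on whether $1/p_1+1/p_2\le 1$ or $>1$. In the first case, set $1/p=1/p_1+1/p_2\le 1$; H\"older gives $\|\Delta_{j'}f\,\widetilde\Delta_{j'}g\|_{L^p}\le \|\Delta_{j'}f\|_{L^{p_1}}\|\widetilde\Delta_{j'}g\|_{L^{p_2}}$, and Bernstein \eqref{eq5.1} applied to the ball-supported product at scale $2^j$ produces the loss $2^{3j(1/p-1/p_2)}=2^{3j/p_1}$. Summing over $j'\ge j-N_0$ gives
$$
2^{j(s_1+s_2-\frac{3}{p_1})}\|\Delta_j R(f,g)\|_{L^{p_2}}\lesssim \sum_{j'\ge j-N_0}2^{(j-j')(s_1+s_2)}d_{j'}\|f\|_{\dot{B}^{s_1}_{p_1,1}}\|g\|_{\dot{B}^{s_2}_{p_2,1}},
$$
which is a discrete convolution summable in $j$ provided $s_1+s_2>0$.

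The hard part, and the place where the hypothesis $s_1+s_2>3\max(0,\tfrac{1}{p_1}+\tfrac{1}{p_2}-1)$ is sharp, is the case $1/p_1+1/p_2>1$. Here one cannot take $1/p=1/p_1+1/p_2$ directly. My plan is to use H\"older with $a=p_1$ and $b$ defined by $1/b=1-1/p_1\le 1/p_2$, bound $\|\widetilde\Delta_{j'}g\|_{L^b}\lesssim 2^{3j'(1/p_1+1/p_2-1)}\|\widetilde\Delta_{j'}g\|_{L^{p_2}}$ via Bernstein, then use Bernstein once more to pass from $L^1$ to $L^{p_2}$ at scale $2^j$, producing $2^{3j(1-1/p_2)}$. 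Rearranging gives
$$
2^{j(s_1+s_2-\frac{3}{p_1})}\|\Delta_j R(f,g)\|_{L^{p_2}}\lesssim \sum_{j'\ge j-N_0}2^{-(j'-j)(s_1+s_2-3(\frac{1}{p_1}+\frac{1}{p_2}-1))}d_{j'}\|f\|_{\dot{B}^{s_1}_{p_1,1}}\|g\|_{\dot{B}^{s_2}_{p_2,1}},
$$
which is $\ell^1_j$-summable exactly under the stated lower bound on $s_1+s_2$. Taking $\ell^1$ in $j$ of the three contributions and combining yields \eqref{eq5.3}. The main technical obstacle is this last case: it requires choosing the H\"older exponents and the auxiliary Bernstein loss so that all powers of $2^{j'}$ compatible with the $\dot{B}^{s_i}_{p_i,1}$ normalizations line up; once this is done the geometric summation constraint is precisely the hypothesis.
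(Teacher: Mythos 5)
Your proposal is correct and follows essentially the same route as the paper's proof in the Appendix: Bony decomposition, Bernstein plus H\"older on the two paraproducts with the same case split on $p_1$ versus $p_2$, and the same two-case treatment of the remainder, with the only cosmetic difference that in the case $\frac{1}{p_1}+\frac{1}{p_2}>1$ you apply the Bernstein upgrade to $\widetilde{\Delta}_{j'}g$ while the paper applies it to $\Delta_{j'}f$ (both yield the identical loss $2^{3j'(\frac{1}{p_1}+\frac{1}{p_2}-1)}$). The exponent bookkeeping and the role of each hypothesis ($s_1\le\frac{3}{p_1}$, $s_2\le\min\{\frac{3}{p_1},\frac{3}{p_2}\}$, and the lower bound on $s_1+s_2$ for geometric summability of the remainder) match the paper's argument.
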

\begin{proof}
The ideas comes essentially from \cite{D05}. Thanks to Bony's paraproduct decomposition, we have
\begin{equation*}
  fg=T_{f}g+T_{g}f+R(f,g).
\end{equation*}
Applying Lemma \ref{le5.1} gives
\begin{align}\label{eq5.4}
   \|\Delta_{j}T_{f}g\|_{L^{p_{2}}}&\lesssim \sum_{|j'-j|\le4}\|S_{j'-1}f\|_{L^{\infty}}\|\Delta_{j'}g\|_{L^{p_{2}}}\nonumber\\
   &\lesssim \sum_{|j'-j|\le4}\sum_{k\leq j'-2}2^{(\frac{3}{p_{1}}-s_{1})k}2^{s_{1}k}\|\Delta_{k}f\|_{L^{p_{1}}}\|\Delta_{j'}g\|_{L^{p_{2}}}\nonumber\\
   &\lesssim 2^{(\frac{3}{p_{1}}-s_{1}-s_{2})j}d_{j}\|f\|_{\dot{B}^{s_1}_{p_1,1}}\|g\|_{\dot{B}^{s_2}_{p_2,1}}.
\end{align}
For the term $T_{g}f$, in the case that $1\leq p_{1}\leq p_{2}$, it follows from Lemma \ref{le5.1} that
\begin{align}\label{eq5.5}
   \|\Delta_{j}T_{g}f\|_{L^{p_{2}}}&\lesssim \sum_{|j'-j|\le4}2^{3(\frac{1}{p_{1}}-\frac{1}{p_{2}})j'}\|S_{j'-1}g\|_{L^{\infty}}\|\Delta_{j'}f\|_{L^{p_{1}}}\nonumber\\
   &\lesssim \sum_{|j'-j|\le4}2^{3(\frac{1}{p_{1}}-\frac{1}{p_{2}})j'}\sum_{k\leq j'-2}2^{(\frac{3}{p_{2}}-s_{2})k}2^{s_{2}k}\|\Delta_{k}g\|_{L^{p_{2}}}\|\Delta_{j'}f\|_{L^{p_{1}}}\nonumber\\
   &\lesssim 2^{(\frac{3}{p_{1}}-s_{1}-s_{2})j}d_{j}\|f\|_{\dot{B}^{s_1}_{p_1,1}}\|g\|_{\dot{B}^{s_2}_{p_2,1}},
\end{align}
while in the case that $ p_{2}<p_{1}$,  we have
\begin{align}\label{eq5.6}
   \|\Delta_{j}T_{g}f\|_{L^{p_{2}}}&\lesssim \sum_{|j'-j|\le4}\|S_{j'-1}g\|_{L^{\frac{p_{1}p_{2}}{p_{1}-p_{2}}}}\|\Delta_{j'}f\|_{L^{p_{1}}}\nonumber\\
   &\lesssim \sum_{|j'-j|\le4}\sum_{k\leq j'-2}2^{(\frac{3}{p_{1}}-s_{2})k}2^{s_{2}k}\|\Delta_{k}g\|_{L^{p_{2}}}\|\Delta_{j'}f\|_{L^{p_{1}}}\nonumber\\
   &\lesssim 2^{(\frac{3}{p_{1}}-s_{1}-s_{2})j}d_{j}\|f\|_{\dot{B}^{s_1}_{p_1,1}}\|g\|_{\dot{B}^{s_2}_{p_2,1}}.
\end{align}
Here estimates \eqref{eq5.4}, \eqref{eq5.5} and \eqref{eq5.6} are verified since $s_{1}\le \frac{3}{p_1}$,
$s_2\leq \min\{\frac{3}{p_{1}}, \frac{3}{p_2}\}$.  Finally, in the case that $\frac{1}{p_1}+\frac{1}{p_2}\leq 1$, using Lemma \ref{le5.1} again yields
that
\begin{align}\label{eq5.7}
   \|\Delta_{j}R(f,g)\|_{L^{p_{2}}}&\lesssim 2^{\frac{3j}{p_1}}\sum_{j'\geq j-N_{0}}\|\Delta_{j'}f\|_{L^{p_1}}\|\widetilde{\Delta}_{j'}g\|_{L^{p_{2}}}\nonumber\\
   &\lesssim 2^{\frac{3j}{p_1}}\sum_{j'\geq j-N_{0}}2^{-(s_1+s_2)j'}2^{s_1j'}\|\Delta_{j'}f\|_{L^{p_1}}2^{s_2j'}\|\widetilde{\Delta}_{j'}g\|_{L^{p_{2}}}\nonumber\\
   &\lesssim 2^{(\frac{3}{p_{1}}-s_{1}-s_{2})j}d_{j}\|f\|_{\dot{B}^{s_1}_{p_1,1}}\|g\|_{\dot{B}^{s_2}_{p_2,1}}.
\end{align}
If $\frac{1}{p_1}+\frac{1}{p_2}>1$, then we find $p_{2}'$ such that $\frac{1}{p_2}+\frac{1}{p_{2}'}=1$, and
\begin{align}\label{eq5.8}
   \|\Delta_{j}R(f,g)\|_{L^{p_{2}}}&\lesssim 2^{3(1-\frac{1}{p_2})j}\sum_{j'\geq j-N_{0}}\|\Delta_{j'}f\widetilde{\Delta}_{j'}g\|_{L^{1}}\nonumber\\
   &\lesssim 2^{3(1-\frac{1}{p_2})j}\sum_{j'\geq j-N_{0}}\|\Delta_{j'}f\|_{L^{p_{2}'}}\|\widetilde{\Delta}_{j'}g\|_{L^{p_{2}}}\nonumber\\
   &\lesssim 2^{3(1-\frac{1}{p_2})j}\sum_{j'\geq j-N_{0}}2^{3(\frac{1}{p_1}+\frac{1}{p_2}-1)j'}\|\Delta_{j'}f\|_{L^{p_{1}}}\|\widetilde{\Delta}_{j'}g\|_{L^{p_{2}}}\nonumber\\
   &\lesssim 2^{3(1-\frac{1}{p_2})j}\sum_{j'\geq j-N_{0}}2^{3(\frac{1}{p_1}+\frac{1}{p_2}-1)j'}2^{-(s_1+s_2)j'}2^{s_1j'}\|\Delta_{j'}f\|_{L^{p_1}}2^{s_2j'}\|\widetilde{\Delta}_{j'}g\|_{L^{p_{2}}}\nonumber\\
   &\lesssim 2^{(\frac{3}{p_{1}}-s_{1}-s_{2})j}d_{j}\|f\|_{\dot{B}^{s_1}_{p_1,1}}\|g\|_{\dot{B}^{s_2}_{p_2,1}}.
\end{align}
Here estimates \eqref{eq5.7} and \eqref{eq5.8} are verified since  $s_1+s_2>3\max(0,\frac{1}{p_1}+\frac{1}{p_2}-1)$. We complete the proof of Lemma \ref{le5.3}.
\end{proof}

\subsection{Global existence with small initial data}

In this section, we sketch the proof of global existence part in Theorem \ref{th3.1}. The approach is  similar to that of \cite{ZZL15}. The only difficulty lies in estimations of the nonlinear terms $\Delta\phi\nabla\phi$ and $u\cdot\nabla v$, which if $1\leq q\leq p$,  we have proved the desired bilinear estimates in \cite{ZZL15}. In the case of $p<q$, the proof comes essentially from the approach used in Lemma \ref{le4.2} to estimate $\Delta\phi\nabla\phi$,  and Lemma \ref{le4.3} to estimate $u\cdot\nabla v$, where we need only to deal with the time variable by the general principle that
the time exponent behaves according to the H\"{o}lder
inequality, and an additional condition $\frac{1}{q}-\frac{1}{p}> -\min\{\frac{1}{3}, \frac{1}{2p}\}$ is needed.
More precisely, since
$$
    \Delta\phi\nabla\phi=-(v-w)\nabla(-\Delta)^{-1}(v-w),
$$
we can show that
\begin{align*}
   \|v\nabla(-\Delta)^{-1}w\!+\!w\nabla(-\Delta)^{-1}v\|_{L^{1}_{t}(\dot{B}^{-1+\frac{3}{p}}_{p,1})}\lesssim \|v\|_{\mathcal{L}^{\infty}_{t}(\dot{B}^{-2+\frac{3}{q}}_{q,1})}
   \|w\|_{L^{1}_{t}(\dot{B}^{\frac{3}{q}}_{q,1})}\!+\!\|w\|_{\mathcal{L}^{\infty}_{t}(\dot{B}^{-2+\frac{3}{q}}_{q,1})}
   \|v\|_{L^{1}_{t}(\dot{B}^{\frac{3}{q}}_{q,1})}
\end{align*}
and
\begin{equation*}
   \|u\cdot\nabla
    v\|_{L^{1}_{t}(\dot{B}^{-2+\frac{3}{q}}_{q,1})}\lesssim\|u\|_{\mathcal{L}^{\infty}_{t}(\dot{B}^{-1+\frac{3}{p}}_{p,1})}
   \|v\|_{L^{1}_{t}(\dot{B}^{\frac{3}{q}}_{q,1})}+\|u\|_{L^{1}_{t}(\dot{B}^{1+\frac{3}{p}}_{p,1})}
   \|v\|_{\mathcal{L}^{\infty}_{t}(\dot{B}^{-2+\frac{3}{q}}_{q,1})}.
\end{equation*}
Based on these two desired bilinear estimates, we can follow the approach used in \cite{ZZL15} to prove that if $\|(u_0, v_0, w_0)\|_{\dot{B}^{-1+\frac{3}{p}}_{p,1}\times(\dot{B}^{-2+\frac{3}{q}}_{q,1})^{2}}$ is sufficiently small, then the system
\eqref{eq3.1}--\eqref{eq3.2} admits a unique global solution. We complete the proof, as desired.

\medskip
\medskip

\noindent\textbf{Acknowledgments.} J. Zhao is partially supported by the National Natural Science Foundation of China (11371294), the Fundamental Research Funds for the Central Universities (2014YB031) and the Fundamental Research Project of Natural Science in Shaanxi Province--Young Talent Project (2015JQ1004). Q. Liu is partially supported by the National Natural Science Foundation of China (11326155,
11401202),  the Scientific Research Fund of Hunan Provincial
Education Department (14B117)  and the China Postdoctoral Science Foundation (2015M570053).

\end{document}